\DeclareSymbolFontAlphabet{\mathbb}{AMSb}
\DeclareSymbolFontAlphabet{\mathbbl}{bbold}
\theoremstyle{plain}
\newtheorem{theorem}{Theorem}[section]
\newtheorem{lemma}[theorem]{Lemma}
\theoremstyle{definition}
\theoremstyle{remark}
\newtheorem{remark}[theorem]{Remark}
\newcommand{\Ee}{\mathbb{E}}
\newcommand{\Nn}{\mathbb{N}}
\newcommand{\Pp}{\mathbb{P}}
\newcommand{\Rr}{\mathbb{R}}
\newcommand{\Un}{\mathds{1}}
\newcommand{\Ce}{\mathcal{C}}
\newcommand{\Fe}{\mathcal{F}}
\newcommand{\Le}{\mathcal{L}}
\newcommand{\Me}{\mathcal{M}}
\newcommand{\Ye}{\mathcal{Y}}
\newcommand{\Ze}{\mathcal{Z}}
\newcommand{\Teb}{{\boldsymbol{\mathcal{T}}}}
\newcommand{\Ns}{\mathscr{N}}
\newcommand{\Us}{\mathscr{U}}
\newcommand{\Zg}{\mathfrak{Z}}
\newcommand{\deltab}{\boldsymbol{\delta}}
\newcommand{\gammab}{\boldsymbol{\gamma}}
\newcommand{\kappab}{\boldsymbol{\kappa}}
\newcommand{\lambdab}{\boldsymbol{\lambda}}
\newcommand{\Gammab}{\boldsymbol{\Gamma}}
\newcommand{\Thetab}{\boldsymbol{\Theta}}
\newcommand{\Fb}{{\boldsymbol{F}}}
\newcommand{\Mb}{{\boldsymbol{M}}} 
\newcommand{\Pb}{{\boldsymbol{P}}}
\newcommand{\Qb}{{\boldsymbol{Q}}}
\newcommand{\Rb}{{\boldsymbol{R}}}
\newcommand{\Tb}{{\boldsymbol{T}}}
\newcommand{\Xb}{{\boldsymbol{X}}}
\newcommand{\Zb}{{\boldsymbol{Z}}}
\newcommand{\xb}{{\boldsymbol{x}}}
\newcommand{\ensemble}[1]{ \left\lbrace #1 \right\rbrace } 
\newcommand{\prth}[1]{\!\left( #1 \right) }
\newcommand{\crochet}[1]{\!\left[ #1 \right] }  
\newcommand{\intcrochet}[1]{\llbracket #1 \rrbracket} 
\newcommand{\abs}[1]{\left| #1 \right|}  
\newcommand{\norm}[1]{\left| \! \left| #1 \right| \! \right|}
\newcommand{\Esp}[1]{ \Ee  \prth{ #1 } }  
\newcommand{\Prob}[1]{ \Pp \prth{ #1 } }  
\newcommand{\Espr}[2]{ \Ee_{#1} \prth{ #2 } } 
\def\inv{^{-1}}
\def\longlongrightarrow{\hspace{+0.1ex} - \hspace{-1.1ex} - \hspace{-1.1ex} - \hspace{-1.1ex}\longrightarrow  } 
\newcommand{\tendvers}[2]{ \underset{#1 \rightarrow #2}{\longlongrightarrow} }  
\newcommand{\cvlaw}[2]{\stackrel{\Le}{\underset{#1 \, \rightarrow \, #2}{\longlongrightarrow}}}  
\newcommand{\equivalent}[1]{ {\underset{#1 }{\sim} } }
\newcommand{\Unens}[1]{ \Un_{ \ensemble{#1} } }
\def\eqlaw{\stackrel{\Le}{=}}
\newcommand{\pe}[1]{\left[ #1 \right] }
\newcommand{\centered}[1]{\stackrel{ _\circ}{#1}\!\!}
\def\Var{{ \operatorname{Var} }}
\def\FM{{\operatorname{FM}}}
\def\dKol{ d_{\operatorname{Kol}} }
\def\Ber{ \operatorname{Ber} }
\def\Poisson{ \operatorname{Po} }
\def\Bin{\operatorname{Bin}}
\def\Argtanh{ \operatorname{Argtanh} }
\def\geq{\geqslant}
\def\leq{\leqslant}
\let\oldforall\forall
\def\forall{\oldforall\,} 
\let\oldexists\exists
\def\exists{\oldexists\,}
\definecolor{pink}{RGB}{219, 48, 122}
\definecolor{purple}{RGB}{128, 0, 128}
\definecolor{rougeclair}{rgb}{1,.65,.65}
\newcommand{\blue}[1]{\color{black}#1\color{black}} 
\newcommand{\purple}[1]{\color{black}#1\color{black}}
\title[The Curie-Weiss model through exchangeability surrogates]{ A surrogate by exchangeability approach \\ to the Curie-Weiss model\vspace{-0.2cm}}
\author[Y. Barhoumi-Andr\'eani]{Yacine Barhoumi-Andr\'eani}
\author[M. Butzek]{Marius Butzek}
\author[P. Eichelsbacher]{Peter Eichelsbacher}
\date{\today}
\subjclass[2020]{60E99, 82B05, 82B20, 60G09}
\begin{document}
\begin{abstract}
We introduce a new general concept of surrogate random variable, the ``surrogate by exchangeability'' that allows to study the class of random variables that can be decomposed by means of an independent randomisation. 

As an example, we treat the case of the Curie-Weiss model using the explicit construction of its De Finetti measure of exchangeability. Writing the magnetisation as a sum of i.i.d.'s randomised by the underlying De Finetti random variable, the surrogate study shows that the appearance of a phase transition can be understood as a competition between these two sources of randomness, the Gaussian regime corresponding to a marginally relevant disordered system.
\end{abstract}
\maketitle

\tableofcontents


\section{Introduction}

\subsection{Motivations, history and main result}

The Curie-Weiss model of $n$ spins at inverse temperature $ \beta \geq 0$ is the law of the random variables $ (X_k^{(\beta)})_{1 \leq k \leq n} $ defined by 
\begin{align}\label{Def:CurieWeiss}
\Pp_n^{(\beta)} \equiv \Pp_{ \prth{ X_1^{(\beta)}\!, \dots,\, X_n^{(\beta)} } } := \frac{e^{ \frac{\beta}{2n} S_n^2 } }{ \Esp{ e^{\frac{\beta}{2n} S_n^2 } } } \bullet \Pp_{ (X_1, \dots, X_n) },
\end{align}
where $ f \bullet \Pp $ denotes the bias/penalisation/tilting of $ \Pp $ by $f$ and
\begin{align*}
(X_k)_{1 \leq k \leq n}\sim \textrm{i.i.d. Ber}_{\ensemble{\pm 1}}(1/2), \qquad\qquad S_n := \sum_{k = 1}^n X_k.
\end{align*}

One classical modification of this model consists in adding an additional parameter, the external magnetic field $ \mu $, i.e.
\begin{align}\label{Eq:CurieWeissGeneral}
\Pp_{ \prth{ X_1^{(\beta, \mu)}\!, \dots,\, X_n^{(\beta, \mu)} } } := \frac{e^{ \frac{\beta}{2n} S_n^2 + \mu S_n} }{ \Esp{ e^{\frac{\beta}{2n} S_n^2 + \mu S_n } } } \bullet \Pp_{ (X_1, \dots, X_n) }.
\end{align}

In this article, we will only be concerned with the case $ \mu = 0 $.

\medskip

This simple model of statistical mechanics was originally introduced by Pierre Curie in 1895 \cite{CurieThese} and refined by Pierre-Ernest Weiss in 1907 \cite{WeissOnCurie} as an exactly solvable model of ferromagnetism: the ferromagnetic alloys have the property of spontaneously changing their magnetic behaviour when heated, once a certain critical temperature threshold is reached.

Nowadays, it is presented as a \textit{mean-field} approximation of the more refined Ising model, i.e. as the replacement of an interaction with nearest neighbour $ \sum_{i \sim j} X_i X_j $ by an interaction with all other spins $ \sum_{i, j} X_i X_j = S_n^2 $ (see e.g. \cite[ch. 2]{FriedliVelenik}). Such approximations are frequently performed in probability theory in general and in statistical mechanics in particular. Replacing a complex model with a simpler one whose overall behavior may be examined through explicit computations allows to get an intuition of the features that can be inferred from the original model, sometimes with no alteration. This is the case of the Curie-Weiss model when it comes to spontaneous magnetisation and metastability. In particular, it does exhibit a phase transition with three distinct behaviours at high, critical and low temperature.

We refer to \cite[ch. 2]{FriedliVelenik} for a friendly introduction to its main properties, or the more classical references \cite{Brout, KacMecaStat, StanleyMecaStat, Thompson}.

\medskip

Of particular interest is the law of the \textit{unnormalised} magnetisation
\begin{align}\label{Def:UnnormalisedMagnetisation}
M_n^{(\beta)} := \sum_{k = 1}^n X_k^{(\beta)},
\end{align}
given, for all continous bounded function $f$, by 
\begin{align*}
\Esp{ f\prth{ M_n^{(\beta)} } } = \frac{ \Esp{ e^{ \frac{\beta}{2n} S_n^2 } f(S_n) } }{ \Esp{ e^{ \frac{\beta}{2n} S_n^2 } } }.
\end{align*}

This random variable contains all the information of the model, as the law of every spin is defined by means of $ M_n^{(\beta)} $.

\medskip

The difference of behaviour of the system when the inverse temperature $ \beta $ varies can be summarised in the following theorem that can be found e.g. in the books
\cite{Brout, FriedliVelenik, KacMecaStat, StanleyMecaStat, Thompson} or in the papers \cite{EllisNewman, EllisNewmanRosen}. In the Bernoulli case that we consider, the case $\beta=1$ is, to the best of the authors' knowledge, first due to Simon and Griffiths \cite[thm. 1]{SimonGriffiths}.

\begin{theorem}[Fluctuations of the \textit{unnormalised} magnetisation]\label{Theorem:FluctuationsMagClassical}
We have :
\begin{enumerate}

\medskip
\item If $ \beta < 1 $, 
\begin{align*}
\frac{1}{\sqrt{n}} \, M_n^{(\beta )} \cvlaw{n}{+\infty} \Ns\prth{ 0, \frac{1}{1 - \beta } }. 
\end{align*}

\medskip
\item If $ \beta = 1 $, let $ \gammab(a) \sim \Gammab(a) $ i.e. $ \Prob{ \gammab(a) \in dx} = \Unens{x > 0} x^a e^{-x} \frac{dx}{x} $ for $ a > 0 $, and $ B_{\pm 1} \sim \Ber_{\ensemble{\pm 1} }(1/2) $ independent of $ \gammab(a) $; then,
\begin{align*}
\frac{1}{n^{3/4}} \, M_n^{(1)} \cvlaw{n}{+\infty} B_{ \pm 1} \, \gammab(1/4)^{1/4} \sim e^{- \frac{x^4}{12 }} \frac{dx}{\Ze_0} .
\end{align*}

\medskip
\item If $ \beta = 1 - \frac{\gamma}{\sqrt{n} } $ with $ \gamma \in \Rr $ fixed, let $ \Fb_{\!\! \gamma} \sim  e^{ - \gamma \frac{x^2}{2} - \frac{x^4}{12} } \frac{dx}{\Ze_\gamma } $; then,
\begin{align*}
\frac{1}{n^{3/4}} \, M_n^{(1 - \gamma / \sqrt{n} )} \cvlaw{n}{+\infty} \Fb_{\!\! \gamma}.
\end{align*}

\medskip
\item If $ \beta > 1 $, let $ B_{\pm 1} \sim \Ber_{\ensemble{\pm 1} }(1/2) $; then,
\begin{align*}
\frac{1}{n} \, M_n^{(\beta )} \cvlaw{n}{+\infty} t_\beta\, B_{\pm 1}, \qquad \mbox{where } t_\beta = \tanh(\beta t_\beta).
\end{align*}
\end{enumerate}
\end{theorem}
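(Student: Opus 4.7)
The idea is to linearise the quadratic term $S_n^2$ via a Hubbard--Stratonovich identity, thereby producing the de Finetti measure of the exchangeable family $(X_1^{(\beta)}, \dots, X_n^{(\beta)})$ explicitly. Introducing $Z \sim \Ns(0,1)$ independent of the $(X_k)$, one has $e^{\frac{\beta}{2n} S_n^2} = \Esp{ e^{\sqrt{\beta/n}\, Z\, S_n} \mid S_n}$, whence
\begin{align*}
\Esp{ f(M_n^{(\beta)}) } \,=\, \frac{ \Esp{ (\cosh \sqrt{\beta/n}\, Z)^n \, \Esp{ f(S_n^{(Z)}) \mid Z} } }{ \Esp{ (\cosh\sqrt{\beta/n}\, Z)^n } },
\end{align*}
where, conditionally on $Z$, $S_n^{(Z)}$ is a sum of $n$ i.i.d.\ $\ensemble{\pm 1}$-Bernoullis with common mean $\tanh(\sqrt{\beta/n}\, Z)$. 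After the rescaling $Y := Z/\sqrt{n}$, the tilted marginal of $Y$ has density proportional to $e^{n \phi_\beta(y)}$, with $\phi_\beta(y) := \ln \cosh(\sqrt{\beta}\, y) - y^2/2$. This $Y$ is the exchangeability surrogate announced in the abstract: $M_n^{(\beta)}$ decomposes as the ``disordered'' term $n \tanh(\sqrt{\beta}\, Y)$ plus an independent CLT-size fluctuation of order $\sqrt{n}$, and the four regimes of the theorem all follow from a Laplace-type analysis of $\phi_\beta$.

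The critical equation $\phi_\beta'(y) = 0$ reads $u = \beta \tanh u$ for $u := \sqrt{\beta}\, y$, and its solutions govern the asymptotics of $Y$. If $\beta < 1$, the unique maximiser is $y = 0$ with $\phi_\beta''(0) = \beta - 1 < 0$, so $\sqrt{n}\, Y \cvlaw{n}{+\infty} \Ns(0, 1/(1-\beta))$; combining with the conditional CLT on the spins one gets $M_n^{(\beta)}/\sqrt{n} \to \sqrt{\beta}\, A + G$ with $A \sim \Ns(0, 1/(1-\beta))$ and $G \sim \Ns(0,1)$ independent, of total variance $\beta/(1-\beta) + 1 = 1/(1-\beta)$. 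If $\beta = 1$, $\phi_1''(0) = 0$ and $\phi_1(y) = -y^4/12 + O(y^6)$, so $n^{1/4}\, Y$ has limit density proportional to $e^{-x^4/12}$; since the conditional CLT contributes only at the negligible scale $\sqrt{n} = o(n^{3/4})$, $M_n^{(1)}/n^{3/4}$ inherits this same law, which identifies with $B_{\pm 1}\, \gammab(1/4)^{1/4}$ (up to normalisation) via the substitution $|W|^4/12 \sim \Gammab(1/4)$. For $\beta = 1 - \gamma/\sqrt{n}$, the expansion $\phi_\beta(y) = \tfrac{\beta - 1}{2} y^2 - y^4/12 + O(y^6)$ together with the rescaling $y = x/n^{1/4}$ gives $n \phi_\beta(y) \to -\gamma x^2/2 - x^4/12$, hence the crossover density $\Fb_{\!\!\gamma}$. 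Finally for $\beta > 1$, two symmetric maximisers $\pm y_\beta = \pm u_\beta/\sqrt{\beta}$ emerge with $u_\beta = \beta \tanh(u_\beta)$; Laplace's method concentrates $Y$ on $\ensemble{\pm y_\beta}$ with equal weights by the $y \leftrightarrow -y$ symmetry of $\phi_\beta$, and setting $m_\beta := \tanh(u_\beta) = u_\beta/\beta$ one recovers $M_n^{(\beta)}/n \cvlaw{n}{+\infty} \Ber_{\ensemble{\pm m_\beta}}(1/2)$ with $m_\beta = \tanh(\beta m_\beta)$.

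The main obstacle is to make the Laplace approximation of $e^{n\phi_\beta}$ quantitative and uniform enough to couple cleanly with the conditional Berry--Esseen/CLT controlling the Bernoulli sum around its conditional mean. In the critical and near-critical regimes (2)--(3), the degeneracy $\phi_\beta''(0) = 0$ prevents a direct Gaussian comparison, so one must control the quartic term globally via a strict-concavity estimate on $\phi_\beta$ away from $0$ and establish tightness of $n^{1/4}\, Y$ directly from the density formula. In the low-temperature regime (4), the two wells must be treated simultaneously, and one must verify that the conditional fluctuations around each well are of strictly smaller order than the gap $2 m_\beta$ so that the limit has exactly two atoms with no smoothing. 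Tightness in the appropriate rescaling, combined with the explicit density computations above, then yields the stated convergences in distribution via a Slutsky-type argument.
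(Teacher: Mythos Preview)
Your proposal is correct and follows essentially the same route as the paper: the Hubbard--Stratonovich linearisation you perform is precisely how the paper derives the De Finetti measure (Lemma~\ref{Lemma:DeFinettiCWspins}), your random variable $Y$ is the paper's randomisation $\Rb_n^{(\beta)}/\sqrt{\beta}$ (equivalently $\tanh(\sqrt{\beta}\,Y)=\Tb_n^{(\beta)}$), and your decomposition of $M_n^{(\beta)}$ into the disorder term $n\tanh(\sqrt{\beta}\,Y)$ plus a conditional CLT fluctuation is exactly the surrogate $\Me_n^{(\beta)}$ of \eqref{Def:SurrogateMnBeta}. The only substantive addition in the paper beyond your sketch is the refined Bernoulli coupling used at $\beta=1$ to obtain the optimal $O(1/\sqrt{n})$ \emph{rate}; for the bare convergence in law stated in Theorem~\ref{Theorem:FluctuationsMagClassical} your Slutsky argument (the $\sqrt{n}$ conditional fluctuation being $o(n^{3/4})$) is entirely sufficient.
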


\medskip

We note that the left transition $ \gamma > 0 $ and the right transition $ \gamma < 0 $ give the same limiting law, even though the graph of the density displays a very different behaviour, with two different modes that announce the case $ \beta > 1 $ in the second case; see Remark~\ref{Rk:beta>1:conditioning}. This \textit{continuous phase transition} is characteristic of the case $ \mu = 0 $, see e.g. \cite[\S~2.5.3]{FriedliVelenik}.

\medskip

Several modifications and follow-ups to Theorem~\ref{Theorem:FluctuationsMagClassical} can be made: universality of the limits when the law of the $ (X_k)_k $ is changed \cite{EllisNewman, EllisNewmanRosen}, dynamical spin-flip version \cite{KulskeLeNy}, concentration properties of the spins around the limit in the case $ \beta > 1 $ \cite{ChatterjeeStein07, ChatterjeeDey09}, moderate and large deviations \cite{EichelsbacherLoeweMDP, EllisBook}, modification of the Hamiltonian leading to the Curie-Weiss-Potts model \cite{EichelsbacherMartschink}, the inhomogeneous Curie-Weiss model \cite{DommersEichelsbacher}, the $p$-spin Curie-Weiss model \cite{MukherjeeSonBhattacharya}, the multi-group Curie-Weiss model \cite{FleermannKirschToth, KirschToth}, etc. 

The methods to prove the previous results are diverse and varied: the concentration property in \cite{ChatterjeeStein07, ChatterjeeDey09} uses exchangeable pairs to build a functional satisfying a simple sub-linear inequality ameanable to the Herbst argument (that consists in bounding the Laplace transform of the random variable by means of a differential inequality). The methods in \cite{EllisNewman, EllisNewmanRosen} use the Laplace/Fourier transform in the same way the Central Limit Theorem is proven, and the technique used in \cite{FleermannKirschToth, KirschToth} uses the method of moments and, in the case of joint limiting Gaussian distribution, computes the limiting correlation between sums of different families of spins by means of a Hubbard-Stratonovich transformation\footnote{This transformation amounts to the computation of the De Finetti measure in Lemma~\ref{Lemma:DeFinettiCWspins}; compare with \cite[prop. 31]{KirschToth}.} estimated with the Laplace method.

These results allow the variety of techniques used in probability theory to express their power and illustrate a form of richness of the field, both in the questions asked and in the responses that follow.

\medskip

The goal of this paper is to pursue this trend by giving yet another proof of this old and respectable theorem with the additional result of the speed of convergence in Kolmogorov and Fortet-Mourier distance; more precisely:

\medskip
\begin{theorem}[Summary of the main results]\label{Theorem:MainIntro}
$ $

\begin{enumerate}

\item In the Fortet-Mourier norm defined by 
\begin{align}\label{Def:dFM}
d_\FM(X, Y) := \sup_{\norm{h}_\infty \leq 1, \norm{h'}_\infty \leq 1}\abs{ \Esp{h(X)} - \Esp{h(Y)} } 
\end{align}
one has for $h$ with $\norm{h}_\infty \leq 1, \norm{h'}_\infty \leq 1$:
\begin{enumerate}

\item If \underline{$ \beta < 1 $} (Theorem~\ref{Theorem:MagnetisationBetaSmaller1} in the sequel)
\begin{align*}
& \abs{ \Esp{ h\prth{ \frac{M_n^{(\beta )}}{\sqrt{n}} } } -  \Esp{ h\prth{ \Zb_{\!\beta} } } } \leq C \frac{ \norm{h'}_\infty }{\sqrt{n} }  +  \frac{1}{n} \prth{ \frac{ \beta }{ 1 - \beta }  \norm{h'}_\infty  +  C(\beta)  \norm{h}_\infty } \\
\Longrightarrow \qquad & d_\FM\prth{  \frac{M_n^{(\beta )}}{\sqrt{n}}, \, \Ns\prth{ 0, \frac{1}{1 - \beta} }  } 
               \leq  \frac{C}{\sqrt{n}} + \frac{D(\beta)}{n}
\end{align*}
for constants $ C, D(\beta) > 0 $, with $ \Zb_{\!\beta} \sim \Ns(0, \frac{1}{1 - \beta}) $.

\medskip
\item If \underline{$ \beta = 1 $} (Theorem~\ref{Theorem:MagnetisationBetaEqual1} in the sequel)
\begin{align*}
& \abs{ \Esp{ h\prth{ \frac{M_n^{(1 )}}{n^{3/4}}  } } -  \Esp{ h( \Fb_{\!0} ) } } \leq  \prth{ \frac{  C }{ \sqrt{n} } + O\prth{ \frac{ 1 }{ n^{3/4} } } } \prth{ \vphantom{a^{a^a}} \norm{h}_\infty +  \norm{h'}_\infty } \\
\Longrightarrow \qquad &
d_\FM\prth{  \frac{M_n^{(1)}}{n^{3/4}}, \, \Fb_{\!0}  } 
               \leq  \frac{C}{\sqrt{n}} + \frac{E}{n^{3/4}}
\end{align*}
for constants $ C, E > 0 $.

\medskip
\item If \underline{$ \beta = 1 - \frac{\gamma}{\sqrt{n}} $} (Theorem~\ref{Theorem:MagnetisationBeta_n} in the sequel)
\begin{align*}
& \abs{ \Esp{ h\prth{ \frac{M_n^{(\beta_n )}}{n^{3/4}}  } } -  \Esp{ h( \Fb_{\!\! \gamma} ) } } \leq  \prth{ \frac{  C }{ \sqrt{n} } + O\prth{ \frac{ 1 }{ n^{3/4} } } } \prth{ \vphantom{a^{a^a}} \norm{h}_\infty +  \norm{h'}_\infty }\\
\Longrightarrow \qquad &
 d_\FM\prth{  \frac{M_n^{(1 + \frac{\gamma}{n})}}{n^{3/4}}, \, \Fb_{\!\gamma}  } 
               \leq  \frac{C}{\sqrt{n}} + \frac{E(\gamma)}{n^{3/4}}
\end{align*}
for constants $ C, E(\gamma) > 0 $.

\medskip
\item If \underline{$ \beta > 1 $} (Theorem~\ref{Theorem:MagnetisationBetaBigger1} in the sequel)
\begin{align*}
& \abs{ \Esp{ h\prth{ \frac{M_n^{(\beta )}}{n } } } -  \Esp{ h\prth{ \Teb^{(\beta)} } } } \leq \prth{  \frac{C }{\sqrt{n} }  + O_\beta\prth{\frac{1}{n}} } \norm{h'}_\infty \\
\Longrightarrow 	\qquad &
d_\FM\prth{  \frac{M_n^{(\beta)}}{n }, \, t_\beta B_{\pm 1} } 
               \leq  \frac{C}{\sqrt{n}}  + \frac{F(\beta)}{n}
\end{align*}
for constants $ C, F(\beta) > 0 $, with $ \Teb^{(\beta)} \sim t_\beta B_{\pm 1} $.

\end{enumerate}

\medskip

\item In the Kolmogorov norm defined by 
\begin{align}\label{Def:dKol}
\dKol(X, Y) := \sup_{t \in \Rr}\abs{\Prob{X \leq t} - \Prob{Y \leq t} } 
\end{align}
one has:
\begin{enumerate}

\item If \underline{$ \beta < 1 $} (Theorem~\ref{Theorem:MagnetisationBetaSmaller1:dKol} in the sequel)
\begin{align*}
\dKol\prth{  \frac{M_n^{(\beta )}}{\sqrt{n}}, \, \Ns\prth{ 0, \frac{1}{1 - \beta} }  } 
               \leq  \frac{C'}{\sqrt{n}} + \frac{D'(\beta)}{n}
\end{align*}
for constants $ C', D'(\beta) > 0 $.

\medskip
\item If \underline{$ \beta = 1 $} (Theorem~\ref{Theorem:MagnetisationBeta=1:dKol} in the sequel)
\begin{align*}
\dKol\prth{  \frac{M_n^{(1)}}{n^{3/4}}, \, \Fb_{\!0}  } 
               \leq  \frac{C'}{\sqrt{n}} + \frac{E'}{n^{3/4}}
\end{align*}
for constants $ C', E' > 0 $.

\medskip
\item If \underline{$ \beta = 1 - \frac{\gamma}{\sqrt{n}} $} (Theorem~\ref{Theorem:MagnetisationBeta_n:dKol} in the sequel)
\begin{align*}
\dKol\prth{  \frac{M_n^{(1 + \frac{\gamma}{n})}}{n^{3/4}}, \, \Fb_{\!\gamma}  } 
               \leq  \frac{C'(\gamma)}{\sqrt{n}} + \frac{E'(\gamma)}{n^{3/4}}
\end{align*}
for constants $ C'(\gamma), E'(\gamma) > 0 $.

\medskip
\item If \underline{$ \beta > 1 $} (Theorem~\ref{Theorem:MagnetisationBetaBigger1:dKol} in the sequel)
\begin{align*}
\dKol\prth{  \frac{M_n^{(\beta)}}{n }, \, t_\beta B_{\pm 1} } 
               \leq  \frac{C'}{\sqrt{n}}  + \frac{F'(\beta)}{n}
\end{align*}
for constants $ C', F'(\beta) > 0 $.

\end{enumerate}

\end{enumerate}
\end{theorem}

The results in the Kolmogorov norm can be compared with the results in Theorem 2.1 in \cite{ChatterjeeShao} and in Theorem 3.7, 3.8 and 3.2 in \cite{EichelsbacherLoewe}. In both papers Stein's method for exchangeable pairs was developed for a rich class of distributional
approximations. Given a random variable $W$, Stein's method
is based on the construction of another variable $\widehat{W}$ (some coupling) such that the pair $(W, \widehat{W})$ is exchangeable, i.e. their
joint distribution is symmetric. Suppose that $X = (X_i^{(\beta)})_i$ is drawn from $\Pp_n^{(\beta)}$, then let $I$ be a uniformly distributed random variable on the set $\{1, \ldots, n \}$,
independent of $X$. Given $I=i$, $\widehat{X} $ is constructed by taking one step in the heat-bath Glauber dynamics: coordinate $X_i^{(\beta)}$ is replaced by $ \widehat{X}_i^{(\beta)} $ drawn from the conditional distribution of $X_i^{(\beta)}$ given $(X_j^{(\beta)})_{j \not= i}$. We denote by $ \widehat{M}_n^{(\beta)} $ the version of $ M_n^{(\beta)} $ with $X_I^{(\beta)}$ replaced by $ \widehat{X}_I^{(\beta)} $, i.e.
$ \widehat{M}_n^{(\beta)} = M_n^{(\beta)} + \widehat{X}_I^{(\beta)}  - X_I^{(\beta)} $. The main observation is the linear regression property 
\begin{align*}
\Esp { M_n^{(\beta)} - \widehat{M}_n^{(\beta)} \Big\vert M_n^{(\beta)}} = \Esp{X_I^{(\beta)} -  \widehat{X}_I^{(\beta)} \Big\vert M_n^{(\beta)}} = \frac{M_n^{(\beta)}}{n} - \frac 1n \sum_{i=1}^n \Esp{ \widehat{X}_i^{(\beta)} \Big\vert M_n^{(\beta)}}.
\end{align*}

The heart of the method in \cite{ChatterjeeShao} and \cite{EichelsbacherLoewe} is therefore to obtain $\Esp { \widehat{X}_i^{(\beta)} \Big\vert M_n^{(\beta)}} = \tanh ( \beta m_i(X))$ with $ m_i(X) := \frac 1n \sum_{k=1, k \not= i}^n X_k^{(\beta)}$.
For further details see \cite{ChatterjeeShao} and \cite{EichelsbacherLoewe}.
\medskip

The proof of Theorem~\ref{Theorem:MainIntro} will use a new perspective that shall shed new light on the nature of the limits and their speeds, in addition to fitting in a more conceptual way into the field of statistical mechanics, by highlighting the articulation of a structural property of the model that is often unnoticed in the main textbooks on the topic, but nevertheless well-studied in some parts of the literature.. 

We hope that future works will expand this philosophy to the variety of available statistical mechanical models in addition to go beyond the scope of the sole fluctuations.

\medskip
\subsection{Literature and philosophy of proof}

\subsubsection{State of the art}

In a domain as venerable as the Curie-Weiss model, older than 100 years, it seems very difficult to innovate, especially with the original model of Bernoulli spins. In addition to the classical studies of Ellis-Newman-Rosen \cite{EllisNewman, EllisNewmanRosen} that use the Laplace transform, one must add the classical tools of probability theory when concerned with distributional approximation such as Stein's method of exchangeable pairs \cite{ChatterjeeStein07, ChatterjeeDey09, ChatterjeeShao, EichelsbacherLoewe}.

A very important feature of the Curie-Weiss random variables~\eqref{Def:CurieWeiss} that is not present in e.g. the Ising model is the existence of an \textit{exchangeability measure}. While exchangeable pairs have been used thoroughly by means of Stein's method, see \cite{ChatterjeeShao},\cite{EichelsbacherLoewe}, writing the spins as i.i.d. random variables conditionally to a measure of mixture is a particularly strong peculiarity that was taken advantage of in several works on the Curie-Weiss model, for instance:
\begin{itemize}

\item Jeon \cite{JeonCW} extends Theorem~\ref{Theorem:FluctuationsMagClassical} (with more general distributions than the Bernoulli ones) into a functional central limit theorem towards a Gaussian process for the rescaled partial sums of sub-critical spins $ ( \sum_{k = 1}^{\pe{nt}} X_k^{(\beta < 1)})_{t \in [0, 1]} $ and proves that the critical process rescaled in the same way is non Gaussian. He moreover considers a wider class of penalisations given by the exponential of the cumulant generating series of a particular random variable (with conditions on the existence of a unique global minimiser for the difference between this series and the Gaussian one), a class already considered by Ellis and Newman \cite{EllisNewman} and called the \textit{class L}.

\item Chaganty and Sethuraman \cite{ChagantySethuraman1987} extend Theorem~\ref{Theorem:FluctuationsMagClassical} to a larger class of penalisations and distribution functions using a methodology based on large deviations/local CLT type of manipulations, building on the works \cite{ChagantySethuraman1985, OkamotoProba}. In particular, \cite[ex. 4.4]{ChagantySethuraman1987} is specifically concerned with the Bernoulli case. The same class \textit{L} of random random variables is here again considered.

\item Papangelou \cite{Papangelou} extends the Jeon results by giving a functional central limit theorem towards a Brownian bridge for the process of partial sums of critical spins $ ( \sum_{k = 1}^{\pe{nt}} X_k^{(\beta = 1)})_{t \in [0, 1]} $ rescaled with a random shift, implying that \cite[p. 266]{Papangelou}
\begin{quote}
``\textit{there is considerable Gaussian structure in the internal fluctuations of the critical model}'',
\end{quote}
an assertion that will be confirmed here, although only for $t = 1$.

\item Liggett, Steif and Toth \cite{LiggettSteifToth} study the problem of extension to an infinite system of spins, following general works on extendibility of symmetric probability distributions such as \cite{AldousStFlour, DiaconisExchSurvey, Scarsini, Spizzichino}. In addition to several statistical mechanical models such as the Potts, Curie-Weiss-Heisenberg and Curie-Weiss clock model that are generalisations of the Curie-Weiss model, they prove the infinite extendibility for the ferromagnetic Curie-Weiss model (i.e. the model with $ \beta > 0 $ which is the only one that we consider in this article), and, in the antiferromagnetic case where such an extension is impossible ($ \beta < 0 $), ``how much'' one can extend  (see \cite[def. 1.13]{LiggettSteifToth} for a precise meaning). 
\end{itemize}

The articles \cite{ChagantySethuraman1987, JeonCW, LiggettSteifToth, Papangelou} use in an extensive way the existence of a De Finetti measure of exchangeability for the Curie-Weiss spins to tackle natural probabilistic questions (functional CLT, extension to infinite exchangeability, etc.). None of these problems will be treated here, though; we will focus exclusively on a new approach to Theorem~\ref{Theorem:FluctuationsMagClassical}, leaving open the question of a functional CLT or a change of penalisation to future studies (see \S~\ref{Sec:Conclusion} for a list of interesting problems).

\medskip
\subsubsection{A quick overview of the philosophy}

In a broad sense, exchangeability is concerned with the action of a particular group or algebra, usually infinite. This action leaves invariant the law of the sequence of random variables, which, by duality between functions and measures, is nothing but a probabilistic manifestation of representation theory: the measure underlying the sequence can be disintegrated with the extremal points of the convex set of measures in a similar way representations are decomposed by means of irreducible representations\footnote{
Characters of representations (i.e. their traces) are harmonic functions for a relevant Laplacian, and a mixture of measures can be seen as a decomposition of harmonic functions on the Martin boundary of an underlying Markov field, see e.g. \cite{FollmerMartin}.
}. Problems having such group actions hidden in their core are famously known to be solvable; this is the theory of \textit{integrable systems}. The recent field of integrable probability is another instance of such a phenomenon in probability theory\footnote{
As noted by A. Borodin and V. Gorin \cite{BorodinGorinSurvey}
\begin{quote}
(...) the ``solvable'' or \textit{integrable} members of this class [of models] should be viewed as projections of much more powerful objects whose origins lie in representation theory. In a way, this is similar to integrable systems that can also be viewed as projections of representation theoretic objects; and this is one reason we use the words integrable probability to describe the phenomenon.
\end{quote}
}. 

\medskip

In some sense that will be made precise in this article, this phenomenon will be reproduced: the knowledge of a structure of mixture will be used to decompose the magnetisation with two levels of randomness, one of the random variables being a sum of i.i.d.'s conditionally to the other one, as in the field of random walks in random environment. We will then perform a probabilistic approximation that will, quite surprisingly, preserve all the information of the phase transition picture, and even more at the level of the speed of convergence (in a continuous or Kolmogorov distance). The Curie-Weiss model is already the mean-field approximation of the Ising model that retains its phase transition aspect; the surrogate random variables that we will introduce constitute a last piece of approximation (arguably, \textit{the} last piece of approximation) that will again conserve this picture.

\medskip

We will describe in detail this philosophy in \S~\ref{Subsec:Theory:GeneralSurrogate}, but we can already give an overview of the procedure. Consider a mixture
\begin{align*}
\Pp_{(X_1, \dots, X_n)} = \int \Pp_{(Y_1(p), \dots, Y_n(p))} \nu_n(dp), \qquad \nu_n(dp) = \Prob{ \Pb_{\!\!n} \in dp}
\end{align*}
with mixing measure $ \nu_n $ and independent random variables $ (Y_k(p))_k $ for all $p$. Then, one has
\begin{align*}
\Pp_{ X_1 + \cdots + X_n} = \int \Pp_{Y_1(p) + \cdots + Y_n(p)} \nu_n(dp)
\end{align*}
and one can study the additional structure that consists in the sum of i.i.d.'s $ S_n(p) := \sum_{k = 1}^n Y_k(p) $ and the ``random environment'' given by the mixture measure $ \nu_n $, or equivalently, the randomisation $ \Pb_{\!\!n} \sim \nu_n $. Suppose now that, for fixed $p$, $ S_n(p) $ can be approximated in distribution by another random variable $ \widetilde{S}_n(p) $, for instance $ \mu_n(p) + \sigma_n(p) G $ with $ G \sim \Ns(0, 1) $ in the continuous setting, or $ \Poisson(\mu_n(p)) $ in the discrete setting (with $ \mu_n(p) := \Esp{ S_n(p) } $, etc.). If one can make sense, quantitatively, of the approximation 
\begin{align*}
\int \Pp_{S_n(p)} \nu_n(dp) \approx \int \Pp_{\widetilde{S}_n(p)} \nu_n(dp)
\end{align*}
then, one can study instead the replacement random variable coming from the CLT, or \textit{CLT surrogate} $ \widetilde{S}_n(\Pb_{\!\! n}) \sim \int \Pp_{\widetilde{S}_n(p)} \nu_n(dp) $. As one can see, this philosophy is very general and applies to a broad class of models enjoying a mixture property (see e.g. a list in \S~\ref{Sec:Conclusion}).

\medskip

The rationale for Theorem~\ref{Theorem:FluctuationsMagClassical} will then take the form of an interplay between the CLT approximation $ S_n(p) \approx \widetilde{S}_n(p) $ and the mixture approximation $ \Pb_{\!\! n} \approx \Pb'_{\!\! n} $ for some $ \Pb'_{\!\! n} $:
\begin{enumerate}

\item when the randomisation $ \Pb_{\!\! n} $ is ``weak'', the CLT approximation dominates and the limit is Gaussian; this corresponds to the case $ \beta < 1 $. In this particular regime, the randomisation is also shown to converge in law to a Gaussian, and the algebraic structure underlying the construction of the CLT surrogate entails that the limiting random variable is the \textit{sum} of these two Gaussians. 

The limiting Gaussian is thus made out of two independent Gaussians, one coming from the CLT for sums of i.i.d.'s and another one coming from the randomisation, a dichotomy that went unnoticed in all previous proofs of Theorem~\ref{Theorem:FluctuationsMagClassical}. This subtle mixture of two sources of randomness with survival at the limit of a part of the disorder (i.e. the randomisation considered as a random environment) is named, in the language of statistical mechanics, a \textit{marginally relevant disordered system}. It is a reminiscence of the standard energy-entropy competition, the randomness being naturally associated to the entropy, see \cite{EllisBook, EllisNewman, EllisNewmanRosen}.

One classical such case concerns the convergence of the rescaled KPZ equation towards the Edwards-Wilkinson universality class, see Remark~\ref{Rk:DominationCLT:Smooth} and \cite{CaravennaSunZygourasUniv, ZygourasKPZmarginally}. To the best of the authors' knowledge, this is the first time that the Gaussian limit in the Curie-Weiss model is analysed through the lens of such a framework which lies at the heart of statistical mechanics. 

\item The case $ \beta \geq 1 $ being non Gaussian, it can be assumed that the randomisation will no longer be marginally relevant (since the sum of i.i.d.'s is approximately Gaussian), and indeed, the surrogate behaviour is dominated by the randomisation with a total disapperance of the Gaussian sum.

\item The transition around $ \beta = 1 $ consists in a competition between CLT and randomisation, i.e. between the disordered environment and the random walk given by the sum of i.i.d.'s. The proportion of both sources of randomness is particularly visible in their contribution to the speed of convergence (in any type of norm), and one has to design a more refined type of surrogate to analyse it correctly, using the known Bernoulli coupling in $p$. We refer to \S~\ref{SubSec:Smooth:Beta=1} for a precise description.
\end{enumerate}


In order to control exactly the error originated from the replacement of the original random variable by its surrogate, the approximation in law must be quantitative. This is the difference between Theorem~\ref{Theorem:FluctuationsMagClassical} and Theorem~\ref{Theorem:MainIntro}: we will not only be concerned with the limits in law but also with their speed of convergence, in the Fortet-Mourier norm whose test function admit a certain degree of smoothness and in the Kolmogorov norm whose test functions are indicators of half infinite intervals of the real line. 

Here, not only can one save a considerable computational effort by using the classical Berry-Ess\'een theorem for sums of i.i.d.'s, but in addition comes an unexpected bonus that arises as a byproduct of the use of such a surrogate: by integrating a fraction of the randomness of the surrogate, the indicator functions in the Kolmogorov distance are replaced by smooth functions. This transfert from randomness to smoothness is a very agreable surprise that reduces the discontinuous norm estimate to a smooth one for a related random variable, allowing thus to bypass the usual pathologies of discontinuous test functions distances (see \S~\ref{Sec:Kol}). This is one of the advantages of the surrogate by exchangeability approach: it does not differentiate between the discontinuous and the continuous probability norms.

\subsection{Organisation of the paper}

The plan of the paper is as follows:
\begin{itemize}

\item In \S~\ref{Sec:Theory}, we describe the De Finetti measure of the Curie-Weiss model (with a proof in Annex~\ref{Sec:IdentityInLaw}) and the general notion of surrogate in probability theory (with early examples).

\item In \S~\ref{Sec:Smooth}, we prove Theorem~\ref{Theorem:FluctuationsMagClassical} with the additional computation of the speed of convergence in a smooth norm.  We perform the same analysis in the Kolmogorov distance in \S~\ref{Sec:Kol}.

\item We conclude with questions of interest and extensions of this approach to Large Deviations or other types of statistical mechanical models in \S~\ref{Sec:Conclusion}.

\item The Annex~\ref{Sec:Estimates} gives some estimates required in the proofs and Annex~\ref{Sec:IdentityInLaw} computes the De Finetti measure of the Curie-Weiss model. We encourage the reader to start with this result to get to know this De Finetti measure.

\end{itemize}

\medskip
\section{General theory}\label{Sec:Theory}

\subsection{Reminders on the De Finetti measure of the Curie-Weiss model}

The celebrated De Finetti theorem \cite{DeFinetti} can be stated as follows:

\begin{theorem}[De Finetti, 1969]
Every exchangeable infinite sequence of random variables is a mixture of an i.i.d. sequence.
\end{theorem}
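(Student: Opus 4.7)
The plan is to identify the mixing measure as the almost sure limit of the empirical distributions of the sequence. Let $(X_n)_{n \geq 1}$ be an exchangeable sequence valued in a Polish space $E$ (the general case reduces to this setting by standard measure-theoretic considerations). Define the empirical measures
\begin{align*}
\mu_n := \frac{1}{n} \sum_{k = 1}^n \delta_{X_k} ,
\end{align*}
viewed as random elements of the space $\Me_1(E)$ of probability measures on $E$ equipped with the weak topology.

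The first step is to establish the almost sure weak convergence $\mu_n \to \mu_\infty$ for some random probability measure $\mu_\infty$. For every bounded measurable $f \colon E \to \Rr$, the sequence $M_n(f) := \int f \, d\mu_n$ is a reverse martingale with respect to the decreasing filtration $\Ge_n := \sigma(\mu_n, X_{n+1}, X_{n+2}, \ldots)$: exchangeability yields $\Esp{ f(X_1) \mid \Ge_n } = M_n(f)$ (given $\mu_n$ and the tail, the ordered tuple $(X_1, \ldots, X_n)$ is a uniform rearrangement of the multiset $\mu_n$), and $\Ge_{n+1} \subseteq \Ge_n$ because $\mu_{n+1}$ is a deterministic function of $\mu_n$ and $X_{n+1}$. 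The reverse martingale convergence theorem then gives $M_n(f) \to L(f)$ almost surely. Running over a countable family of test functions that determines weak convergence, and using Polishness of $\Me_1(E)$, produces a well-defined random measure $\mu_\infty$ with $\int f \, d\mu_\infty = L(f)$ almost surely.

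The second step is to prove that conditionally on the tail $\sigma$-algebra $\Ge_\infty := \bigcap_n \Ge_n$, the variables $(X_n)_{n \geq 1}$ are i.i.d. with common law $\mu_\infty$. By standard arguments it suffices to check, for every bounded continuous $f$ and every $k \geq 1$, the factorisation
\begin{align*}
\Esp{ f(X_1) \cdots f(X_k) \mid \Ge_\infty } = \prth{ \int f \, d\mu_\infty }^{\! k} .
\end{align*}
Exchangeability implies that $\Esp{ f(X_1) \cdots f(X_k) \mid \Ge_n }$ coincides with the symmetric average of $f(X_{i_1}) \cdots f(X_{i_k})$ over all $k$-tuples of distinct indices from $\ensemble{1, \ldots, n}$, which differs from $M_n(f)^k$ by a diagonal correction of order $1/n$. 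Letting $n \to +\infty$ yields the identity, and a polarisation/monotone class argument upgrades it to arbitrary finite products of bounded continuous functions.

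The main obstacle lies in the measure-theoretic step of upgrading the almost sure convergence of $M_n(f)$ for each individual $f$ into the existence of a genuine \textbf{random} probability measure $\mu_\infty$ that captures all marginals \emph{simultaneously}; this is handled cleanly in the Polish setting by selecting a countable determining family and paying attention to measurability. Once $\mu_\infty$ is constructed, De Finetti's representation follows by conditioning on $\Ge_\infty$: for any bounded measurable $F$ depending on finitely many coordinates,
\begin{align*}
\Esp{ F(X_1, \ldots, X_k) } = \Esp{ \int F \, d\mu_\infty^{\otimes k} } ,
\end{align*}
so that the law of the sequence $(X_n)_{n \geq 1}$ is the mixture $\int \nu^{\otimes \Nn} \, \Prob{ \mu_\infty \in d\nu }$ of i.i.d. product laws, as required.
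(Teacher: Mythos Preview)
The paper does not supply its own proof of this statement: De Finetti's theorem is quoted as a classical result, followed only by historical and bibliographical remarks (references to \cite{DeFinetti}, \cite{AccardiOnDeFinetti}, \cite{DiaconisFreedmanDeFinetti}) before the discussion moves on to the specific de Finetti measure of the Curie--Weiss model computed in Lemma~\ref{Lemma:DeFinettiCWspins}. There is therefore nothing to compare your argument against at the level of proof strategy.

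That said, your proposal is a correct and standard route to the theorem in the Polish setting: the reverse-martingale structure of $M_n(f)=\Esp{f(X_1)\mid\Ge_n}$ with respect to the decreasing exchangeable filtration, the passage from pointwise limits $L(f)$ to a genuine random measure $\mu_\infty$ via a countable convergence-determining class, and the identification of conditional joint moments through the $U$-statistic/diagonal-correction argument are all sound. One small quibble: your justification of $\Ge_{n+1}\subseteq\Ge_n$ is phrased as ``$\mu_{n+1}$ is a deterministic function of $\mu_n$ and $X_{n+1}$'', which is exactly the right observation but you should also note explicitly that $X_{n+2},X_{n+3},\ldots$ already lie in $\Ge_n$; otherwise the inclusion is not fully argued. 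Apart from this cosmetic point the argument is complete.
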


Here, exchangeability for an infinite sequence is understood in the sense of the action of the inductive limit of the symmetric group, i.e. every \textit{finite} sub-sequence of the sequence is invariant by a (finite) permutation. This notion has been extended in several ways, in a finite-dimensional version by Diaconis-Freedman \cite{DiaconisFreedmanDeFinetti}, using other groups such as the projective limit of symmetric groups by Gnedin-Olshanski \cite{GnedinOlshanski} and orthogonal groups by Olshanski-Vershik \cite{OlshanskiVershik} and Schoenberg \cite{Schoenberg}, etc. See e.g. Accardi \cite{AccardiOnDeFinetti} for an account of subtleties on this theorem and further references, or Diaconis-Freedman \cite{DiaconisFreedmanDeFinetti} for a presentation of the topic in relation with the computation of total variation distances.

\medskip

The Curie-Weiss spins are clearly exchangeable since the measure \eqref{Def:CurieWeiss} is invariant by permutation of the spins. Since $n$ is fixed, the De Finetti theorem \textit{a priori} does not apply, but Lemma~\ref{Lemma:DeFinettiCWspins} shows nevertheless that this is the case. There exists a measure $ \widetilde{\nu}_{n, \beta} : \crochet{0, 1} \to \crochet{0, 1} $ such that 
\begin{align}\label{Eq:DeFinettiCW:Meas:P}
\Pp_n^{(\beta)} \equiv \Pp_{(X_1^{(\beta)}, \dots, X_n^{(\beta)} ) } = \int_{\crochet{0, 1}} \Pp_{ ( X_1(p), \dots, X_n(p) ) } \, \widetilde{\nu}_{n, \beta}(dp),
\end{align}
where $ (X_k(p))_{1 \leq k \leq n} \sim i.i.d. \Ber_{\ensemble{\pm 1}}(p) $.

Classically writing 
\begin{align}\label{Def:TotallyDependentCouplingBer:0}
	X_k(p) = 2 \Unens{U_k < p} - 1, \qquad U_k \sim \Us([0, 1]),
\end{align}
one can rewrite \eqref{Eq:DeFinettiCW:Meas:P} in a more probabilistic way using a random variable $ \widetilde{V}_{n, \beta} \sim \widetilde{\nu}_{n, \beta} $ independent of $ (X_k(p))_{1 \leq k \leq n, p \in [0, 1]}  $. We then have the randomisation equality
\begin{align}\label{Eq:DeFinettiCW:RV:P}
\hspace{-0.4cm} (X_1^{(\beta)}, \dots, X_n^{(\beta)} )   
              \eqlaw ( X_1(\widetilde{V}_{n, \beta}), \dots, X_n(\widetilde{V}_{n, \beta}) ) 
               = \prth{2 \Un_{\{ U_1 < \widetilde{V}_{n, \beta} \} } - 1, \dots, 2 \Un_{\{ U_n < \widetilde{V}_{n, \beta} \} } - 1  }.
\end{align}
%
%
%

The measure $ \widetilde{\nu}_{n, \beta} $ (or the random variable $ \widetilde{V}_{n, \beta} $) is well-known for the Curie-Weiss model. It is given by (see \cite[thm. 5.6, (164), (165)]{KirschMoments} or Lemma~\ref{Lemma:DeFinettiCWspins} for a derivation)
\begin{align}\label{Eq:DeFinettiMeasureP}
\widetilde{\nu}_{n, \beta}(dp) = \widetilde{f}_{n, \beta}(p) dp, \qquad \widetilde{f}_{n, \beta}(p) := \frac{1}{\Ze_{n, \beta} }  e^{ -\frac{n}{2\beta} \Argtanh(2p - 1)^2 - (n/2 + 1) \ln(1 - (2p - 1)^2) },
\end{align}
with an explicit renormalisation constant $ \Ze_{n, \beta} $ defined by the equality $ \int_0^1 \widetilde{f}_{n, \beta}(p) dp = 1 $. We recall here that $ \Argtanh(x) = \frac{1}{2} \log\abs{\frac{1 + x}{1 - x}} $ for $ \abs{x} < 1 $. For the sake of completeness, we give a proof of this result in Annex~\ref{Sec:IdentityInLaw}.

\medskip

If instead of considering the parameter $ p \in [0, 1] $ of the Bernoulli random variables, one encodes the De Finetti measure with the expectation parameter $ t := 2p - 1 \in [-1, 1] $, one gets 
\begin{align}\label{Eq:DeFinettiMeasureT}
\nu_{n, \beta}(dt) = f_{n, \beta}(t) dt, \qquad f_{n, \beta}(t) := \frac{1}{\Ze_{n, \beta} }  e^{ -\frac{n}{2\beta} \Argtanh(t)^2 - (n/2 + 1) \ln(1 - t^2) },
\end{align}
and a randomisation by an independent random variable $ V_{n, \beta} \sim \nu_{n, \beta} $.

\subsection{Surrogate random variables in probability theory}\label{Subsec:Theory:GeneralSurrogate}

Consider the following classical problem in extreme value theory: compute the fluctuations of $ \Mb_{\! n} := \max_{1 \leq k \leq n} Z_k $ for independent random variables $ (Z_k)_{k \geq 1} $ when $ n \to +\infty $. One way to proceed is to note that
\begin{align}\label{Eq:MaxToSum}
\ensemble{ \Mb_{\! n} \leq x } = \ensemble{ \forall k \leq n, \, Z_k \leq x } = \ensemble{ \sum_{k = 1}^n \Unens{ Z_k > x} = 0 }.
\end{align}

The problem amounts thus to analyse the fluctuations of the parametric random variable 
\begin{align*}
S_n(x_n) := \sum_{k = 1}^n \Unens{ Z_k > x_n}, \qquad x_n := \mu_n  + \sigma_n x, 
\end{align*}
for given numbers $ \mu_n, \sigma_n $ that one has to tune in order to get the correct result. Since $ S_n(x_n) $ is a sum of independent $ \ensemble{0, 1} $-Bernoulli random variables, one can proceed to a Poisson approximation $ S_n(x_n) \approx \Poisson(f(x)) $, using for instance the Chen-Stein method \cite{Feidt}, resulting in:
\begin{align*}
\Prob{ \Mb_{\! n} \leq x_n } = \Prob{ S_n(x_n) = 0 } = \Prob{ \Poisson(f(x)) = 0 } + o(1) = e^{- f(x) } + o(1).
\end{align*}

By doing so, one has replaced the estimation of a non linear functional of $ (Z_k)_k $, a maximum, by a simpler problem, the estimation of a sum. Such a sum is a \textit{surrogate} random variable. Its study is equivalent to the original problem while being arguably simpler. 

The equality \eqref{Eq:MaxToSum} allows to use a strict equality to replace the original problem by the surrogate problem, and the approximation is only performed at the level of $ S_n(x_n) $, but one could reverse the steps or add an additional approximation step in between (replacing the equality \eqref{Eq:MaxToSum} by an approximation) as long as the original problem is not fundamentally impacted. This is what we perform now.

\subsection{Surrogate magnetisation inequalities}

We recall the following inequality obtained with Stein's method and zero-bias transform (see \cite[(13)]{GoldsteinReinert}\footnote{There seems to be a typo in this equation. This is corrected in \cite[thm. 3.29]{Ross}. Note also that we have done a linear rescaling of the test function.} or \cite[thm. 3.29]{Ross}) valid for all $ h \in \Ce^1 $ with $ \norm{h}_\infty, \norm{h'}_\infty < \infty $
\begin{align}\label{Ineq:BerryEsseen:Smooth}
\abs{\Esp{h(S_n) } - \Esp{h(\sigma_n G + \mu_n) } } \leq C \norm{h'}_\infty,
\end{align}
where $ S_n = \sum_{k = 1}^n Z_k $ with i.i.d. $ (Z_k)_k $ satisfying $ \Esp{\abs{Z}^3} < \infty $, $ \sigma_n^2 := \Var(S_n) = n \Var(Z) $, $ \mu_n := \Esp{S_n} = n \Esp{Z}, G \sim \Ns(0, 1) $ here and throughout this paper, and $C$ is an absolute constant. We see in particular that rescaling linearly $ h \leftarrow h(\frac{\cdot}{\sqrt{n} }) $ gives a speed of convergence in $ O(1/\sqrt{n}) $ (Berry-Ess\'een theorem).

Note that other identities using stronger conditions on the functional space defining the norm and stronger moments conditions allow for stronger speed of convergence (see e.g. \cite[cor. 3.1]{GoldsteinReinert}).

\medskip

In the particular case of Bernoulli $ \ensemble{\pm 1} $ random variables $ (B_k)_k $ of parameter $ p := \Prob{B = 1} $, one has $ \Esp{B} = 2p - 1 $ and $ \Var(B) = 4 p(1 - p) $, thus
\begin{align*}
\abs{\Esp{h(S_n) } - \Esp{h\prth{ \sqrt{n} \, 2\sqrt{p(1 - p)} G + (2p - 1)n } } } \leq C \norm{h'}_\infty.
\end{align*}

Now, taking $ p $ at random with a law $ \nu $ and writing $ S_n(p) $ to mark the dependency, one gets
\begin{align*}
\delta_n(h) & := \abs{\int_{[0, 1]}\Esp{h(S_n(p)) } \nu(dp) - \int_{[0, 1]}\Esp{h\prth{ \sqrt{n} \, 2\sqrt{p(1 - p)} G + (2p - 1)n } }\nu(dp) } \\
            & \leq \int_{[0, 1]} \abs{\Esp{h(S_n(p)) }  - \Esp{h\prth{ \sqrt{n} \, 2\sqrt{p(1 - p)} G + (2p - 1)n } } } \nu(dp) \\
            & \leq C \norm{h'}_\infty \int_{[0, 1]} \nu(dp) = C \norm{h'}_\infty. 
\end{align*}

Taking $p$ distributed as 
\begin{align}\label{Def:PnBeta}
\Pb_n^{(\beta)} \sim \widetilde{\nu}_{n, \beta}
\end{align}
one finally gets with \eqref{Eq:DeFinettiCW:Meas:P} and \eqref{Eq:DeFinettiCW:RV:P}
\begin{align*}
\abs{ \Esp{ h\prth{ M_n^{(\beta)} } } - \Esp{ h\prth{  \sqrt{n} \,  G \times 2 \sqrt{\Pb_n^{(\beta)}(1 - \Pb_n^{(\beta)})}   +  n \times (2\Pb_n^{(\beta)} - 1)  } } } \leq C \norm{h'}_\infty. 
\end{align*}

One can perform a last bit of \textit{massaging} to this expression. Define the random variable
\begin{align}\label{Def:TnBeta}
\Tb_{\! n}^{(\beta)} := 2\Pb_n^{(\beta)} - 1  \sim \nu_{n, \beta}
\end{align}
which corresponds to the parametrisation of the De Finetti measure $ \nu_{n, \beta} $ defined in \eqref{Eq:DeFinettiMeasureT} as opposed to the one defined in \eqref{Eq:DeFinettiMeasureP}. Noting that $ p(1 - p) = \frac{1 - t^2}{4} $, one defines the \textit{surrogate magnetisation} by
\begin{align}\label{Def:SurrogateMnBeta}
\Me_n^{(\beta)} := \sqrt{n} \, G \, \sqrt{1 - ( \Tb_{\! n}^{(\beta)} )^2 } + n \,  \Tb_{\! n}^{(\beta)}, 
\end{align}
so that
\begin{align}\label{Ineq:RandomisedBerryEsseen}
\abs{ \Esp{ h\prth{ M_n^{(\beta)} } } - \Esp{ h\prth{ \Me_n^{(\beta)} } } } \leq C \norm{h'}_\infty.
\end{align}

Using \eqref{Ineq:RandomisedBerryEsseen} in conjunction with the triangle inequality yields with $ \Zb_{\! n, \beta} \sim \Ns(0, n/(1 - \beta) ) $
\begin{align*}
\abs{ \Esp{ h\prth{ M_n^{(\beta)} } } - \Esp{ h\prth{  \Zb_{\! n, \beta} } } }  & \leq  \abs{ \Esp{ h\prth{ M_n^{(\beta)} } } - \Esp{ h\prth{ \Me_n^{(\beta)} } } }  + \abs{ \Esp{ h\prth{ \Me_n^{(\beta)} } } - \Esp{ h\prth{ \Zb_{\! n, \beta} } } } \\
                 & \leq C \norm{h'}_\infty  + \abs{ \Esp{ h\prth{ \Me_n^{(\beta)} } } - \Esp{ h\prth{ \Zb_{\! n, \beta} } } }.
\end{align*}

It is thus enough to control the convergence of the surrogate random variable $ \Me_n^{(\beta)} $ towards its limit (up to a rescaling) to obtain the speed of convergence of the original random variable.

The justification for Theorem~\ref{Theorem:FluctuationsMagClassical} relies then entirely on the fact that the structure of $ \Me_n^{(\beta)} $ defined in \eqref{Def:SurrogateMnBeta} is particularly simple to understand since $ G $ is of order $1$ and $ \Tb_{\! n}^{(\beta)} =: \cos ( \Thetab_{\! n}^{(\beta)} ) \in [-1, 1] $:
\begin{enumerate}

\item when $ \Tb_{\! n}^{(\beta)} $ converges to $ 0 $, the first term is approximately Gaussian after rescaling by $ \sqrt{n} $ and one needs to study the behaviour of $ \Tb_{\! n}^{(\beta)} \sqrt{n} $ which will be shown to be Gaussian too (this corresponds to $ \beta < 1 $), 

\item when $ \Tb_{\! n}^{(\beta)} $ tends to $\pm 1$, one needs to rescale by $n$ and the limit will come from the last term in \eqref{Def:SurrogateMnBeta} (this corresponds to $ \beta > 1 $),

\item last, when both terms are of the same order, i.e. $ \sqrt{n} \sin( \Thetab_{\! n}^{(\beta)} )  \approx n \cos (\Thetab_{\! n}^{(\beta)})  $ or equivalently $ \tan(\Thetab_{\! n}^{(\beta)})  = O_\Pp(\sqrt{n}) $, the analysis has to be refined and a non standard limit can emerge.

\end{enumerate}


Compared with the expository case of \S~\ref{Subsec:Theory:GeneralSurrogate}, one has defined the surrogate by means of an initial (fundamental) inequality and added an additional (non fundamental) inequality to use it, the approximation coming then next. 


\begin{remark}\label{Rk:PoissonSurrogate}
In the proof of Theorem~\ref{Theorem:MagnetisationBetaEqual1}, we will modify the surrogate \eqref{Def:SurrogateMnBeta} using the classical coupling \eqref{Def:TotallyDependentCouplingBer:0} between Bernoulli random variables, due to a discrepancy in the desired speed of convergence. This shows that several surrogates are available, depending on the goal to reach. In particular, since one deals with randomised sums of $ \ensemble{0, 1} $-Bernoulli random variables, one can also replace the fundamental inequality \eqref{Ineq:BerryEsseen:Smooth} with the following Poisson approximation inequality \cite[\S~4]{Ross}
\begin{align*}
\abs{\Esp{ h(S_n) } - \Esp{ h( \Poisson( n p ) ) } } \leq C \norm{  h(\cdot + 1) - h }_\infty, 
\end{align*}
in which case the associated surrogate random variable becomes
\begin{align*}
\widehat{\Me}_n^{(\beta)} := \Poisson\prth{ n \Pb_n^{(\beta)} }.
\end{align*}

Such a surrogate random variable with values in $ \Nn $ could prove useful to obtain a local CLT for the total magnetisation in the vein of \cite{FleermannKirschToth}.
\end{remark}

\medskip
\section{Application to the Curie-Weiss magnetisation in smooth distance}\label{Sec:Smooth}
\medskip

\subsection{The case $ \beta < 1 $}\label{SubSec:Smooth:Beta<1}

Define $  \Zb_{\!\beta} \sim \Ns\prth{ 0, \frac{1}{1 - \beta} } $. 


\begin{theorem}[Fluctuations of the \textit{unnormalised} magnetisation for $ \beta < 1 $]\label{Theorem:MagnetisationBetaSmaller1}
If $ \beta < 1 $, one has for all $ h \in \Ce^1 $ with $ \norm{h}_\infty, \norm{h'}_\infty < \infty $
\begin{align}\label{Eq:Beta<1:Speed}
\abs{ \Esp{ h\prth{ \frac{M_n^{(\beta )}}{\sqrt{n}} } } -  \Esp{ h\prth{ \Zb_{\!\beta} } } } \leq C \frac{ \norm{h'}_\infty }{\sqrt{n} }  +  \frac{1}{n} \prth{ \frac{ \beta }{ 1 - \beta }  \norm{h'}_\infty  +  C(\beta)  \norm{h}_\infty } 
\end{align}
for explicit constants $ C, C(\beta) > 0$. 
\end{theorem}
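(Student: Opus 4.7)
The overall plan is to exploit the surrogate inequality~\eqref{Ineq:RandomisedBerryEsseen} as a first substitution step, and then reduce the comparison of $\Me_n^{(\beta)}/\sqrt{n}$ with $\Zb_{\!\beta}$ to two smaller tasks, using the fundamental Gaussian decomposition $\Zb_{\!\beta}\eqlaw G+H$ where $G\sim\Ns(0,1)$ and $H\sim\Ns\prth{0,\beta/(1-\beta)}$ are independent. The very first move is to apply~\eqref{Ineq:RandomisedBerryEsseen} to the rescaled test function $\widetilde h(y):=h(y/\sqrt{n})$: since $\norm{\widetilde h'}_\infty=\norm{h'}_\infty/\sqrt{n}$, this produces the leading term $C\norm{h'}_\infty/\sqrt{n}$ and leaves the task of bounding $\abs{\Esp{h\prth{\Me_n^{(\beta)}/\sqrt{n}}}-\Esp{h(\Zb_{\!\beta})}}$, with $\Me_n^{(\beta)}/\sqrt{n}=G\sqrt{1-(\Tb_{\!n}^{(\beta)})^2}+\sqrt{n}\,\Tb_{\!n}^{(\beta)}$ by~\eqref{Def:SurrogateMnBeta}.

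Coupling $\Zb_{\!\beta}\eqlaw G+H$ to the \emph{same} standard Gaussian $G$ that appears in the surrogate (independent from $H$ and $\Tb_{\!n}^{(\beta)}$) and invoking the triangle inequality, I split the remaining error into a \emph{smoothing} part
\begin{align*}
E_1:=\abs{\Esp{h\prth{G\sqrt{1-(\Tb_{\!n}^{(\beta)})^2}+\sqrt{n}\,\Tb_{\!n}^{(\beta)}}}-\Esp{h\prth{G+\sqrt{n}\,\Tb_{\!n}^{(\beta)}}}}
\end{align*}
and a \emph{randomisation} part $E_2:=\abs{\Esp{h(G+\sqrt{n}\,\Tb_{\!n}^{(\beta)})}-\Esp{h(G+H)}}$. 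For $E_1$, the Lipschitz bound together with the elementary inequality $1-\sqrt{1-t^2}\leq t^2$ on $[-1,1]$ and the independence of $G$ from $\Tb_{\!n}^{(\beta)}$ yield $E_1\leq\norm{h'}_\infty\,\Esp{\abs{G}}\,\Esp{(\Tb_{\!n}^{(\beta)})^2}$. The second moment is then extracted from the de Finetti variance identity $\Esp{(M_n^{(\beta)})^2}=n+n(n-1)\Esp{(\Tb_{\!n}^{(\beta)})^2}$, which combined with the classical subcritical value $\Esp{(M_n^{(\beta)})^2}/n\to 1/(1-\beta)$ gives $n\,\Esp{(\Tb_{\!n}^{(\beta)})^2}=\beta/(1-\beta)+O(1/n)$; absorbing $\Esp{\abs{G}}=\sqrt{2/\pi}\leq 1$ reproduces the announced term $\tfrac{1}{n}\cdot\tfrac{\beta}{1-\beta}\norm{h'}_\infty$ up to an $O(\norm{h'}_\infty/n^2)$ remainder that will be folded into the final error.

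The main obstacle is the randomisation part $E_2$, which should carry the $C(\beta)\norm{h}_\infty/n$ contribution. I would integrate out $G$ first by introducing $\phi(x):=\Esp{h(G+x)}$: this Gaussian smoothing produces an infinitely differentiable bounded function with $\norm{\phi}_\infty\leq\norm{h}_\infty$ and all derivatives controlled through convolution. The problem then reduces to
\begin{align*}
E_2=\abs{\Esp{\phi(\sqrt{n}\,\Tb_{\!n}^{(\beta)})}-\Esp{\phi(H)}}\leq\norm{h}_\infty\cdot\norm{g_{n,\beta}-q_\beta}_{L^1},
\end{align*}
where $g_{n,\beta}$ is the density of $\sqrt{n}\,\Tb_{\!n}^{(\beta)}$ obtained from~\eqref{Eq:DeFinettiMeasureT} via the change of variable $u=\sqrt{n}\,t$, and $q_\beta$ is the Gaussian density of $H$. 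Expanding the exponent of~\eqref{Eq:DeFinettiMeasureT} by means of $\Argtanh(t)^2=t^2+\tfrac{2}{3}t^4+O(t^6)$ and $\ln(1-t^2)=-t^2-\tfrac{1}{2}t^4+O(t^6)$ produces exactly the quadratic form $-(1-\beta)u^2/(2\beta)$ at leading order, with a residual of size $O(u^2/n)+O(u^4/n)$ on a window of polynomial size in $n$. A matching expansion of the normalisation constant $\Ze_{n,\beta}$ together with a standard exponential concentration argument to truncate the tail of $\Tb_{\!n}^{(\beta)}$ delivers $\norm{g_{n,\beta}-q_\beta}_{L^1}=O(C(\beta)/n)$. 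The main difficulty lies precisely in this last step, namely the joint bookkeeping of the bulk expansion of $g_{n,\beta}$ and of $\Ze_{n,\beta}$, which demands the quantitative Laplace-type estimates naturally housed in Annex~\ref{Sec:Estimates}; once it is in place, assembling $E_1$ and $E_2$ with the triangle inequality yields exactly~\eqref{Eq:Beta<1:Speed}.
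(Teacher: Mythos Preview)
Your approach is essentially the same as the paper's: the surrogate inequality~\eqref{Ineq:RandomisedBerryEsseen} for the $1/\sqrt{n}$ term, the Gaussian decomposition $\Zb_{\!\beta}\eqlaw G+G_\beta$ (your $H$), a two-step split into a ``linearise $\sqrt{1-T^2}\to 1$'' piece carrying $\norm{h'}_\infty/n$ and a ``replace $\sqrt{n}\,\Tb_{\!n}^{(\beta)}\to G_\beta$'' piece carrying $\norm{h}_\infty\cdot\norm{g_{n,\beta}-q_\beta}_{L^1}$, and finally a Laplace-type analysis of the $L^1$ distance (this is exactly the quantity $\delta(g_n,g_\beta)$ that the paper bounds in~\eqref{Ineq:EstimateDeltaNg} via the interval-splitting and fourth-derivative argument).

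The one genuine difference is the \emph{order} of your two-step split. The paper first replaces the density of $\sqrt{n}\,\Tb_{\!n}^{(\beta)}$ by that of $G_\beta$ (keeping the nonlinear $\sqrt{1-x^2/n}$ inside the test function), and only then linearises; as a consequence its $\norm{h'}_\infty$ term carries the exact constant $\Esp{G_\beta^2}=\beta/(1-\beta)$ for free. You linearise first, so your $E_1$ carries $n\,\Esp{(\Tb_{\!n}^{(\beta)})^2}$, which you then have to identify. Your argument for this --- the de Finetti variance identity together with $\Esp{(M_n^{(\beta)})^2}/n\to 1/(1-\beta)$ --- only yields $n\,\Esp{(\Tb_{\!n}^{(\beta)})^2}=\beta/(1-\beta)+o(1)$, not $+O(1/n)$ as you write: mere convergence of $\Esp{(M_n^{(\beta)})^2}/n$ gives no rate. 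This is a small gap; it is easily patched either by a direct Laplace estimate on $\int t^2 f_{n,\beta}(t)\,dt$, or a posteriori from your own $L^1$ bound $\norm{g_{n,\beta}-q_\beta}_{L^1}=O(1/n)$ together with a uniform tail estimate on $g_{n,\beta}$ to control the unbounded integrand $x^2$. The paper's ordering sidesteps this entirely.
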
 


\begin{proof} 
Rescaling $ M_n^{(\beta)} $ by $ \sqrt{n} $ amounts to do $ h \leftarrow h(\frac{\cdot}{\sqrt{n}}) $, thus $ h' \leftarrow \frac{1}{\sqrt{n} } h'(\frac{\cdot}{\sqrt{n}}) $. Substituting in \eqref{Ineq:RandomisedBerryEsseen} yields
\begin{align*}
\abs{ \Esp{ h\prth{ \frac{M_n^{(\beta)}}{\sqrt{n} } } } - \Esp{ h\prth{ \frac{\Me_n^{(\beta)}}{\sqrt{n} } } } } \leq \frac{C}{\sqrt{n} } \norm{h'}_\infty
\end{align*}
and the triangle inequality implies then
\begin{align}\label{Ineq:MagSpeedWithSurrogate}
\abs{ \Esp{ h\prth{ \frac{M_n^{(\beta)}}{\sqrt{n} } } } - \Esp{ h\prth{ \Zb_{\!\beta} } } } \leq \frac{C}{\sqrt{n} } \norm{h'}_\infty + \abs{ \Esp{ h\prth{ \frac{\Me_n^{(\beta)}}{\sqrt{n} } } } - \Esp{ h\prth{ \Zb_{\!\beta} } } }.
\end{align}

Note that we do not get better than the usual Berry-Ess\'een bound for the magnetisation due to the term $ \frac{C}{\sqrt{n} } \norm{h'}_\infty $. So far, we are focused on the validity of the approximation, i.e. we want to prove that $ \Me_n^{(\beta)} / \sqrt{n} $ converges in law to a Gaussian, with speed in the Fortet-Mourier norm of order at least $ \frac{1}{\sqrt{n}} $. We thus define
\begin{align}\label{Def:SurrogateDisanceToGaussian}
\widetilde{\delta}_n(h)  := & \abs{ \Esp{ h\prth{ \frac{\Me_n^{(\beta)}}{\sqrt{n} } } } - \Esp{ h\prth{ \Zb_{\!\beta} } } }. 
\end{align}

Define 

$$
\Xb_{\! n, \beta} :=  \sqrt{n} \, \Tb_{\! n}^{(\beta)}, \quad 
G_\beta \sim \Ns\prth{ 0, \frac{\beta}{1 - \beta} }.
$$

Supposing that
\begin{align}\label{CvLaw:XnBeta}
\Xb_{\! n, \beta} \cvlaw{n}{+\infty} G_\beta,  
\end{align}
one gets $ G \sqrt{1 - (\Xb_{\! n, \beta})^2/n } \sim_{n \to +\infty } G \, \sqrt{1 - G_\beta^2/n } \to G $. Hence the final decomposition of $ \Zb_{\!\beta} $
\begin{align}\label{Eq:MarginallyRelevantDecomposition}
G + G_\beta \eqlaw \Zb_{\!\beta},
\end{align}
with $ G $ independent of $ G_\beta $ since $ G $ and $ \Xb_{\! n, \beta}$ are independent (hence so are $ G $ and $ G_\beta $). 
The law of $ \Xb_{\! n, \beta} := \sqrt{n} \, \Tb_{\! n}^{(\beta)} $ is given by the rescaling of $\nu_{n, \beta}$ in \eqref{Eq:DeFinettiMeasureT}:
\begin{align*}
\Prob{ \Xb_{\! n, \beta} \in dt} & = f_{n, \beta}\prth{ \frac{t}{\sqrt{n} } } \frac{dt}{\sqrt{n} }  \\
              & =  \frac{1}{ \sqrt{n}\,\Ze_{n, \beta} }  e^{ -\frac{n}{2\beta} \Argtanh(t/\sqrt{n})^2 - (n/2 + 1) \ln(1 - t^2/n) } \Unens{ \abs{t} \leq \sqrt{n} } dt.
\end{align*}

Using a Taylor expansion in $0$, one easily gets
%
%
%
$$
\frac{n}{2\beta} \Argtanh\prth{ \frac{t}{\sqrt{n}} }^2 + \prth{ \frac{n}{2} + 1 } \ln\prth{1 - \frac{t^2}{n} }  = \prth{ \frac{1 - \beta }{\beta} } \frac{t^2}{2} + O\prth{ \frac{t^4}{n} }.
$$ 

One can moreover show that (see \cite{KirschMoments})
\begin{align*}
\sqrt{n}\,\Ze_{n, \beta} \tendvers{n}{+\infty} \sqrt{2\pi } \times \sqrt{ \frac{\beta}{1 - \beta} },
\end{align*}
which implies \eqref{CvLaw:XnBeta} and would imply 
\begin{align}\label{EqConj:MagSurrogateBeta<1}
\frac{\Me_n^{(\beta)} }{\sqrt{n} } =  G \, \sqrt{1 - ( \Tb_{\! n}^{(\beta)} )^2 } + \sqrt{n} \,  \Tb_{\! n}^{(\beta)} \cvlaw{n}{+\infty} \Zb_{\!\beta}  
\end{align}
with an additional dominated convergence. This is what we prove now. We have
\begin{align*}
\widetilde{\delta}_n(h)  := & \abs{ \Esp{ h\prth{ \frac{\Me_n^{(\beta)}}{\sqrt{n} } } } - \Esp{ h\prth{ G + G_\beta } } } \\
              = \, & \abs{ \Esp{ h\prth{ G \sqrt{ 1 - \frac{ (\Xb_{\! n, \beta})^2}{n} } + \Xb_{\! n, \beta} } } - \Esp{ h\prth{ G + G_\beta } } } \\
              = \, &\abs{ \int_{[-\sqrt{n}, \sqrt{n}]}  \Esp{ h\prth{ G \sqrt{1 - \frac{x^2}{n}} + x  } } f_{n, \beta}\prth{ \frac{x}{\sqrt{n} } } \frac{dx}{\sqrt{n} }  - \Esp{ h\prth{ G + G_\beta } } }  \\
              =: & \abs{ \int_{[-\sqrt{n}, \sqrt{n}]} h_n(x) g_n(x) dx - \int_\Rr h_G(x) g_\beta(x) dx } 
\end{align*}
with 
$$
h_n(x)  := \Esp{ h\prth{ G \sqrt{1 - \frac{x^2}{n}} + x  } },\,
h_G(x)  := \Ee \, \prth{ \vphantom{a^{a^a} } \,  h\, \prth{ \, G + x  } }
$$
and
$$
g_n(x)  :=  \frac{1}{\sqrt{n} } f_{n, \beta}\prth{ \frac{x}{\sqrt{n} } }, \,
g_\beta(x)  :=  \sqrt{ \frac{1 - \beta}{2 \pi \beta} } e^{ - \frac{1 - \beta}{ \beta} \frac{x^2}{2} }.
$$ 

Note that we have used a coupling of $ \Me_n^{(\beta) } $ and $ \Zb_{\!\beta} $ by supposing that the random variable $ G $ that is used in each random variable is the same. Such a coupling is always possible, and this is an important feature of the proof.

\medskip

One has moreover 
\begin{align*}
\abs{ \int_{[-\sqrt{n}, \sqrt{n}]} h_n g_n  - \int_\Rr h_G g_\beta } & \leq \abs{ \int_{[-\sqrt{n}, \sqrt{n}]} ( h_n g_n  - h_G g_\beta )  } + \abs{ \int_{ \Rr \setminus [-\sqrt{n}, \sqrt{n}] } h_G g_\beta }
\end{align*}
with 
\begin{align*}
\abs{ \int_{ \Rr \setminus [-\sqrt{n}, \sqrt{n}] } h_G g_\beta } & \leq \norm{h_G}_\infty \int_{ \Rr \setminus [-\sqrt{n}, \sqrt{n}] } g_\beta \leq  \norm{h}_\infty \int_{ \Rr \setminus [-\sqrt{n}, \sqrt{n}] } g_\beta \\
               & = \norm{h}_\infty \Prob{ \abs{G_\beta} \geq \sqrt{n} } \\
               & \leq \norm{h}_\infty \Esp{ \abs{G_\beta}^{2k} } n^{-k}, \qquad \forall k \geq 1
\end{align*}
using Markov's inequality. Note that the true value of the Gaussian tail is $ \Prob{ \abs{ G_\beta } \geq x } \leq \frac{ 1 }{ \sqrt{ 2\pi x \sigma_\beta^2 } } \exp\prth{ -\frac{x^2}{2 \sigma_\beta^2 } } $, hence $ \Prob{ \abs{G_\beta} \geq \sqrt{n} } = O(e^{-n/2}) $ since $ \beta < 1 $, and this power bound is small enough for our purposes.

The first integral can be estimated writing
\begin{align*}
\abs{ \int_{[-\sqrt{n}, \sqrt{n}]} ( h_n g_n  - h_G g_\beta )  } & = \abs{ \int_{[-\sqrt{n}, \sqrt{n}]} \crochet{ \, h_n (g_n - g_\beta) +  (h_n - h_G) g_\beta }  } \\
                & \leq  \int_{[-\sqrt{n}, \sqrt{n}]} \abs{  h_n (g_n - g_\beta) } +  \abs{ \purple{ \int_{[-\sqrt{n}, \sqrt{n}]}  (h_n - h_G) g_\beta } }  \\
                & \leq \blue{\norm{h_n}_\infty \int_{[-\sqrt{n}, \sqrt{n}]} \abs{ g_n - g_\beta }  } + \norm{ h_n - h_G }_\infty \int_{[-\sqrt{n}, \sqrt{n}]} g_\beta.
\end{align*}

It is clear that $ \abs{h_n(x)} \leq \norm{h}_\infty $. One can thus bound $ \norm{h_n - h_G}_\infty $ by $ 2 \norm{h}_\infty $ but since $ \int_{[-\sqrt{n}, \sqrt{n}]} g_\beta = \Prob{ \abs{G_\beta} \leq \sqrt{n} } = O( 1 - e^{-n/2} ) = O(1) $ which does not tend to $0$, one must work on $ \abs{ \purple{  \int_{[-\sqrt{n}, \sqrt{n}]} (h_n - h_G) g_\beta } } $ direclty. Since one has the same random variables $ G $ and $ G_\beta $, one has a coupling that allows to write 
\begin{align*}
\purple{  \int_{[-\sqrt{n}, \sqrt{n}]} (h_n - h_G) g_\beta  } & = \Esp{ h\prth{ G \sqrt{1 - \frac{G_\beta^2}{n}} + G_\beta  }\Unens{ \abs{G_\beta } \leq \sqrt{n} } - h(G + G_\beta) \Unens{ \abs{G_\beta} \leq \sqrt{n}  } } \\
              & \hspace{-1cm}= \Esp{  G \prth{ \! \sqrt{1 - \tfrac{G_\beta^2}{n}} -1} \Unens{ \abs{G_\beta } \leq \sqrt{n}  }  h'\prth{ \! G_\beta + G + U  \, G \prth{\! \sqrt{1 - \tfrac{G_\beta^2}{n} } \, -1 } \! }  \! }
\end{align*}
with $ U \sim \Us([0, 1]) $ independent of $ (G, G_\beta) $. Using $ x_+ := \max\{x, 0\} = x\Unens{x \geq 0} $ and $ \ensemble{ \abs{G_\beta} \leq \sqrt{n} } = \ensemble{ 1 - G_\beta^2/n \geq 0 } $, one then gets
\begin{align*}
\abs{\int_{[-\sqrt{n}, \sqrt{n}]} (h_n - h_G) g_\beta} & \leq \Esp{ \abs{ G \prth{\sqrt{\prth{ 1 - \frac{G_\beta^2}{n} }_+ }-1 } } }\, \norm{h'}_\infty  \\
             & =  \Esp{ \abs{ G } }\, \norm{h'}_\infty \times \Esp{\abs{\sqrt{\prth{ 1 - \frac{G_\beta^2}{n} }_+ } -1 } } \\
             & \leq \frac{  \Esp{ G_\beta^2 } }{ n} \, \norm{h'}_\infty, 
\end{align*}
where we have used $ 1 - \sqrt{1 - x}  \leq x $ for $ 0 \leq x \leq 1 $ and $ \Esp{ \abs{G} } \leq \sqrt{\Esp{ G^2 } } = 1 $.

%

\medskip

The important quantity to bound is thus
\begin{align*}
\blue{ \norm{h_n}_\infty \int_{[-\sqrt{n}, \sqrt{n}]} \abs{ g_n - g_\beta }  } \leq \norm{h }_\infty \int_{[-\sqrt{n}, \sqrt{n}]} \abs{ g_n - g_\beta }. 
\end{align*}

One thus needs to estimate carefully
\begin{align*}
\delta(g_n, g_\beta) & := \int_{[-\sqrt{n}, \sqrt{n}]} \abs{ g_n - g_\beta } \\
                     & = \int_{[-\sqrt{n}, \sqrt{n}]} \abs{ \frac{1}{\sqrt{n} } f_{n, \beta}\prth{ \frac{x}{\sqrt{n} } }   - \sqrt{ \frac{1 - \beta}{2 \pi \beta} } e^{ - \frac{1 - \beta}{ \beta} \frac{x^2}{2} } } dx \\
                     & = 2 \int_{[0, \sqrt{n}]} \abs{ \frac{1}{\sqrt{n} } f_{n, \beta}\prth{ \frac{x}{\sqrt{n} } }   - \sqrt{ \frac{1 - \beta}{2 \pi \beta} } e^{ - \frac{1 - \beta}{ \beta} \frac{x^2}{2} } } dx 
\end{align*}
by symmetry of $ g_n $ and $ g_\beta $.

Define
%
%
%
$$
\varphi_n(x) := \frac{n}{2\beta} \Argtanh\prth{ \frac{x}{\sqrt{n}} }^2 + \prth{ \frac{n}{2} + 1 } \ln\prth{1 - \frac{x^2}{n} }, \quad
C_\beta  :=   \frac{1 - \beta}{ \beta},  
$$
%
%
%
so that
$$ 
f_{n, \beta}\prth{ \frac{x}{\sqrt{n} } } = \frac{1}{\Ze_{n, \beta} } e^{ - \varphi_n(x)  },  \quad
g_\beta (x)  =  \sqrt{ \frac{C_\beta}{2\pi} } e^{ - C_\beta \frac{x^2}{2} }.
$$ 

One then has 
\begin{align*}
\delta(g_n, g_\beta) & \leq \abs{ \frac{1}{\Ze_{n, \beta} \sqrt{n} } - \sqrt{ \frac{ C_\beta }{2 \pi } } } \int_{[-\sqrt{n}, \sqrt{n}]} e^{-\varphi_n(x)} dx + \sqrt{ \frac{C_\beta}{2 \pi  } } \int_{[-\sqrt{n}, \sqrt{n}]} \abs{ e^{ - \varphi_n(x) } - e^{ - C_\beta \frac{x^2}{2} } } dx.
\end{align*}

The first quantity is
\begin{align*}
\abs{ \sqrt{ \frac{ C_\beta }{2 \pi } } \times \Ze_{n, \beta} \sqrt{n} - 1 } \int_{[-\sqrt{n}, \sqrt{n}]} g_n & = \abs{ \sqrt{ \frac{ C_\beta }{2 \pi } } \times \Ze_{n, \beta} \sqrt{n} - 1 }  \Prob{ \abs{X_n} \leq \sqrt{n} } \\
               & \leq \abs{ \sqrt{ \frac{ C_\beta }{2 \pi } } \times \Ze_{n, \beta} \sqrt{n} - 1 }
\end{align*}
and an analysis of its speed of convergence to 0 is performed in Lemma~\ref{Lemma:AsymptoticRenormConstant}.

The important quantity is the second one. Define
%
%
%
$$
\gamma_n  := \int_{ (-\sqrt{n}, \sqrt{n}) } \abs{ e^{ - \varphi_n(x) } - e^{ - C_\beta \frac{x^2}{2} } } dx  = \int_{ (-\sqrt{n}, \sqrt{n}) } \abs{ e^{ - ( \varphi_n(x) - C_\beta \frac{x^2}{2} ) } - 1 } e^{ - C_\beta \frac{x^2}{2} } dx.
$$ 

Supposing that the quantity inside the absolute value in the second line was bounded by a constant $ D_n $ on $ (-\sqrt{n}, \sqrt{n} ) $, one would have $ \gamma_n = O\prth{ D_n  \int_{(-\sqrt{n}, \sqrt{n})} e^{ - C_\beta \frac{x^2}{2} } dx } = O\prth{D_n  \Prob{ \abs{G_\beta} \leq \sqrt{n} } } = O\prth{ D_n } $. This is not the case as $ \Argtanh^2( \frac{x}{\sqrt{n}} ) \to +\infty $ when $ x \to \sqrt{n} $; nevertheless, the integral is still definite. By symmetry, we will now work on $ [0, \sqrt{n}) $ and use a factor $2$. 

Let $ \varepsilon \in (0, \sqrt{n}) $ to be choosen later. We split $ [0, \sqrt{n}) $ according to  
\begin{align*}
[0,\sqrt{n} ) :=  [0, \sqrt{n} - \varepsilon ) \cup [\sqrt{n} - \varepsilon, \sqrt{n})
\end{align*}
and estimate the integral on each of these subintervals.

%
%
%
%
%
%
%

\medskip

\noindent\textbf{$ \bullet $ \underline{Main interval:}} On $ [0, \sqrt{n} - \varepsilon ) $, the function $ \widetilde{\kappa}_n : x \mapsto \varphi_n(x) - C_\beta \frac{x^2}{2} $ is not monotone. This is mainly due to the fact that the second derivatives of the two functions occuring in the difference do not match in $0$. Indeed, one has $ \varphi_n(0) = \varphi_n'(0) = 0 $ but $ \varphi_n''(0) = C_\beta - \frac{2}{n} $. 

Up to comparing with a triangle inequality $ G_\beta $ and $ G_{\beta, n} \sim \Ns(0, (C_\beta - \frac{2}{n})\inv ) $, one defines then for $ n \geq n_0(\beta) := \pe{ 2 C_\beta\inv } $ 
\begin{equation}\label{Def:CbetaNandKappaN}
C_{\beta, n} := C_\beta - \frac{2}{n}, \quad
\kappa_n(x) :=  \varphi_n(x) - C_{\beta, n} \frac{x^2}{2}.
\end{equation}

The replacement of $ G_\beta \sim \Ns(0, C_\beta\inv) $ by $ G_{\beta, n} \sim \Ns(0, C_{\beta, n}\inv) $ can be done up to $ O(1/n) $. Indeed,
\begin{align*}
\gamma_n & = \int_{ (-\sqrt{n}, \sqrt{n}) } \abs{ e^{ - \varphi_n(x) } - e^{ - C_\beta \frac{x^2}{2} } } dx \\
                & = \int_{ (-\sqrt{n}, \sqrt{n}) } \abs{ e^{ - \varphi_n(x) } - e^{ - C_{\beta,n} \frac{x^2}{2} } + e^{ - C_{\beta,n} \frac{x^2}{2} } - e^{ - C_\beta \frac{x^2}{2} } } dx \\
                & \leq \int_{ (-\sqrt{n}, \sqrt{n}) } \abs{ e^{ - \varphi_n(x) } - e^{ - C_{\beta,n} \frac{x^2}{2} } } dx + \int_{ (-\sqrt{n}, \sqrt{n}) } \abs{ e^{ - C_{\beta,n} \frac{x^2}{2} } - e^{ - C_\beta \frac{x^2}{2} } } dx \\
                & =: 2  \sqrt{ \frac{ 2\pi }{ C_{\beta, n} }}  \, \widetilde{\gamma}_n  + \int_{ (-\sqrt{n}, \sqrt{n}) } \abs{ e^{ - C_{\beta,n} \frac{x^2}{2} } - e^{ - C_\beta \frac{x^2}{2} } } dx
\end{align*}
with
$$ 
\widetilde{\gamma}_n :=  \sqrt{ \frac{ C_{\beta, n} }{ 2\pi } } \int_0^{ \sqrt{n} } \abs{ e^{ - \varphi_n(x) } - e^{ - C_{\beta,n} \frac{x^2}{2} } } dx 
                  = \sqrt{ \frac{ C_{\beta, n} }{ 2\pi } } \int_0^{ \sqrt{n} } \abs{ 1 -  e^{ - \kappa_n(x) }   }  e^{ - C_{\beta,n} \frac{x^2}{2} } dx
$$ 
and
\begin{align*}
\int_{ -\sqrt{n}} ^{\sqrt{n} } \abs{ e^{ - C_{\beta,n} \frac{x^2}{2} } - e^{ - C_\beta \frac{x^2}{2} } } dx & = \int_{ -\sqrt{n}}^{ \sqrt{n} } \abs{ e^{ \frac{x^2}{n} }- 1 } e^{ - C_\beta \frac{x^2}{2} } dx \\
               & \leq 3 \int_\Rr \frac{x^2}{n}   e^{ - C_\beta \frac{x^2}{2} } dx \\
               & = \frac{3}{n}  \frac{\sqrt{2\pi}}{C_\beta^{3/2}}.
\end{align*}

Here, we have used $ \abs{ e^x - 1 } \leq e^{\abs{x} } - 1 \leq  \abs{x} e^{\abs{x} } $ for all $ x \in \Rr $, $ e^{ \frac{x^2}{n} } \leq 3 $ for $ x \in (-\sqrt{n}, \sqrt{n}) $ and the second moment of a Gaussian.

\medskip

The function $ \kappa_n $ thus defined in \eqref{Def:CbetaNandKappaN} is now strictly increasing and positive on $ [0, \sqrt{n} ) $, hence, so is $ 1 - e^{-\kappa_n} $. Moreover, thanks to the matching of the derivatives and the fact that $ \varphi_n'''(0) = 0 $, the Taylor formula with integral remainder gives at the fourth order
\begin{align*}
\kappa_n(x)  =  \frac{x^4}{6} \int_0^1 (1 - \alpha)^3 \kappa_n^{(4)}( \alpha x ) d\alpha = \frac{x^4}{6} \int_0^1 (1 - \alpha)^3 \varphi_n^{(4)}( \alpha x ) d\alpha,
\end{align*}
where $ \kappa_n^{(4)}(x) := \prth{\frac{d}{dx}}^4 \kappa_n(x) $. The only singularity of all the derivatives of $ \kappa_n $ is in $ \sqrt{n} $, which is not in $ (0, \sqrt{n} - \varepsilon) $. One can thus write for all $ x \in [0, \sqrt{n}) $
\begin{align*}
0 \leq \kappa_n(x)  \leq   \frac{x^4}{6} \int_0^1 (1 - \alpha)^3  d\alpha \norm{ \kappa_n^{(4)} \Un_{ [0, \sqrt{n} - \varepsilon)  } }_\infty = \frac{x^4}{24} \norm{ \kappa_n^{(4)} \Un_{ [0, \sqrt{n} - \varepsilon)  } }_\infty =: M_n(\varepsilon) \frac{x^4}{24}.
\end{align*}

Since $ \kappa_n^{(4)} $ is positive and increasing on $ [0, \sqrt{n} - \varepsilon) $, one has
\begin{align*}
M_n(\varepsilon) = \kappa_n^{(4)}(\sqrt{n} - \varepsilon),
\end{align*}
hence
\begin{align*}
\int_{ [0, \sqrt{n}-\varepsilon) } \abs{ 1 - e^{-\kappa_n(x)} } e^{ - C_{\beta, n} \frac{x^2}{2} } dx & \leq \int_{ [0, \sqrt{n}-\varepsilon) } \prth{ 1 - e^{ - M_n(\varepsilon) \frac{x^4}{24} } } e^{ - C_{\beta, n} \frac{x^2}{2} } dx \\
                & \leq \frac{M_n(\varepsilon) }{24} \int_{ [0, \sqrt{n}-\varepsilon) } x^4 e^{ - C_{\beta, n} \frac{x^2}{2} } dx \\
                & \leq \frac{M_n(\varepsilon) }{24} \sqrt{ \frac{ 2\pi }{ C_{\beta, n} } } \, \Esp{ (G_{\beta, n})^4 }   \\
                & = \frac{M_n(\varepsilon) }{8} \sqrt{ \frac{ 2\pi }{ C_{\beta, n} } } \, C_{\beta, n}^{-2},   
\end{align*}
where we have used the scaling of the Gaussian and its fourth moment equal to $ 3 $.

\medskip

Last, a computation with SageMath \cite{SageMath} gives
\begin{align*}
\kappa_n^{(4)}(x) = \varphi_n^{(4)}(x) = \frac{2 }{n \beta} \frac{ P (x/\sqrt{n} ) \Argtanh(x/\sqrt{n}) - Q_{n, \beta}(x/\sqrt{n}) }{ (1 - (x/\sqrt{n})^2)^4 }
\end{align*}
%
%
%
%
%
with explicit polynomials 
\begin{align*}
P(x)  & := 12\, x ( x^2 + 1 ), \\
Q_{n, \beta}(x) & := 3\beta (1 + 2 n\inv) x^4 - 18 ( 1 - \beta  - 2n\inv ) x^2  - (4 - 3\beta - 6 \beta n\inv)  \\
                & =: Q_{ \beta}(x)  + \frac{1}{n} \widetilde{Q}_\beta(x), \\
Q_\beta(x) & := 3\beta   x^4 - 18 ( 1 - \beta    ) x^2  - (4 - 3\beta  ), \\
\widetilde{Q}_\beta(x) & :=  6( \beta x^4 + 6 x^2  + 6 \beta ).
\end{align*}

Define
%
%
%
$t := \frac{\varepsilon}{\sqrt{n} }  \in (0, 1)$, then

\begin{align}\label{Eq:MnEps}
M_n(\varepsilon) & = \varphi_n^{(4)}(\sqrt{n} - \varepsilon) = \frac{2 }{n \beta} \frac{ P\prth{ 1 - t } \Argtanh(1 - t) - Q_{n, \beta}(1 - t) }{ (1 - (1 - t)^2)^4 } \notag \\
             & = \frac{2 }{n \beta} \frac{ P\prth{ 1 - t } \Argtanh(1 - t) - Q_\beta(1 - t) }{ t^4(2 - t)^4  } - \frac{2 }{n^2 \beta} \frac{ \widetilde{Q}_\beta(1 - t) }{ t^4(2 - t)^4 }.
\end{align}

%
%
%
%
%
%
%
%
%
%
%

\medskip

\noindent\textbf{$ \bullet $ \underline{Remaining interval:}} As $ \kappa_n $ is positive on $  [\sqrt{n} - \varepsilon, \sqrt{n}) $, we have $ 1 - e^{-\kappa_n(x) } \leq 1 $ and
\begin{align*}
\int_{ [\sqrt{n} - \varepsilon, \sqrt{n}) } \abs{ 1 - e^{-\kappa_n(x)} } e^{ -  C_{\beta, n} \frac{x^2}{2} } dx & \leq  \int_{ [\sqrt{n} - \varepsilon, \sqrt{n}) }  e^{ -  C_{\beta, n} \frac{x^2}{2} } dx \\
               & =  \varepsilon \int_0^1   e^{ - C_{\beta, n} \frac{(\sqrt{n }- \varepsilon u)^2}{2} } du  \\  
               & \leq  \varepsilon \, e^{-C_{\beta, n} \frac{\sqrt{n} }{2}} \leq 3 \varepsilon \,    e^{-C_\beta  \frac{ \sqrt{n} }{2}},
\end{align*}
where we have used $ (\sqrt{n }- \varepsilon u)^2 = n (1 - t u)^2 \geq \sqrt{n} $ for $ n $ big enough and for all $ t, u \in [0, 1] $, in addition to $ C_{\beta, n} \frac{\sqrt{n}}{2} = C_\beta \frac{\sqrt{n}}{2} - \frac{1}{\sqrt{n}} $ and $ e^{ 1 / \sqrt{n} } \leq 3 $.
%
%
%
%
%

\medskip

\noindent\textbf{$ \bullet $ \underline{General contribution:}} One finally gets 
\begin{align*}
\sqrt{ \frac{ 2\pi }{ C_{\beta, n} } } \, \widetilde{\gamma}_n := \int_{ (0, \sqrt{n}) } \abs{ 1 - e^{-\kappa_n(x)} } e^{ -  C_{\beta, n} \frac{x^2}{2} }   dx  \leq M_n(\varepsilon) \frac{ C_{\beta, n}^{-2} }{8}  +  3 \sqrt{ \frac{ C_{\beta, n} }{2\pi} } \varepsilon \,    e^{-C_\beta  \frac{ \sqrt{n} }{2}}.
\end{align*}

It remains to choose $ \varepsilon $ in order to get the order of convergence. In view of \eqref{Eq:MnEps}, one can take for instance $ t = \frac{3}{4} $, yielding
\begin{align*}
M_n(\tfrac{3}{4} \sqrt{n}) & = \frac{2 }{n \beta} \frac{ P\prth{ \tfrac{1}{4}} \Argtanh(\tfrac{1}{4}) - Q_\beta(\tfrac{1}{4}) }{ (3/4)^4(3/2 - 1)^4  } - \frac{2 }{n^2 \beta} \frac{ \widetilde{Q}_\beta(\tfrac{1}{4}) }{ (3/4)^4(3/2 - 1)^4  } \\
                & =: \frac{K_1(\beta)}{n} + \frac{K_2(\beta)}{n^2}.
\end{align*}

As a result, one gets for explicit constants $ K_3(\beta), K_4(\beta) > 0 $
\begin{align*}
\sqrt{ \frac{ 2\pi }{ C_{\beta, n} } } \, \widetilde{\gamma}_n   \leq   \frac{K_3(\beta)}{n} + \frac{K_4(\beta)}{n^2} + O\prth{ \sqrt{n} \, e^{- C_\beta \frac{ \sqrt{n} }{2}} } = O_\beta\prth{ \frac{1}{n} }.
\end{align*}

\medskip 

\noindent\textbf{$ \bullet $ \underline{Conclusion:}} We have for all $ k \geq 1 $
\begin{align*}
\widetilde{\delta}_n(h)  \leq \frac{ \Esp{ \abs{G_\beta}^{2k} }}{n^k}   \norm{h}_\infty + \frac{  \Esp{ G_\beta^2 } }{ n } \, \norm{h'}_\infty +  \delta(g_n, g_\beta) \norm{h}_\infty
\end{align*}
and using Lemma~\ref{Lemma:AsymptoticRenormConstant}, one has
\begin{equation}\label{Ineq:EstimateDeltaNg}
\delta(g_n, g_\beta)  \leq \abs{ \sqrt{ \frac{ C_\beta }{2 \pi } } \times \Ze_{n, \beta} \sqrt{n} - 1 } + \gamma_n 
                 \leq \frac{C_\beta^{-9/2} }{4 \, n} + 2 \sqrt{ \frac{ 2\pi }{ C_{\beta, n} }} \, \widetilde{\gamma}_n + \frac{3}{n}  \frac{\sqrt{2\pi}}{C_\beta^{3/2}} \notag 
                 = O_\beta\prth{\frac{1}{n} },
\end{equation}
which gives the result.
\end{proof}


\begin{remark}\label{Rk:DominationCLT:Smooth}
The surrogate approach here defined allows to understand in a better way the apparition of the limiting Gaussian random variable. In the case $ \beta < 1 $, the Gaussian CLT is present through the random variable $ G $, and it is the adjunction of the random variable $ G_\beta $ coming from the fluctuations of the randomisation that finally gives $ \Zb_{\!\beta} = G + G_\beta $. It is thus a subtle mixture of the two structures, sums of i.i.d.'s and randomisation, that gives the final distribution in this case. In the language of statistical mechanics of phase transitions, when a disorder is present in a statistical system and has a marginal effect, one talks about a \textit{marginally relevant disordered system} (see e.g. \cite{CaravennaSunZygourasUniv, ZygourasKPZmarginally} in the context of the KPZ equation or random polymers). It is typically the case here with the decomposition $ \Zb_{\!\beta} = G + G_\beta $ (analogous to the convergence of the rescaled KPZ equation to the Edwards-Wilkinson universality class which is Gaussian).

Nevertheless, when looking at the speed in \eqref{Ineq:MagSpeedWithSurrogate} and \eqref{Eq:Beta<1:Speed}, one sees that it is only the Berry-Ess\'een bound for sums of i.i.d.'s that gives its footprint to the first order and not at all the randomisation in this case (the speed coming from the randomisation only appears at the second order). This will be the opposite in the next case $ \beta = 1 - \frac{\gamma}{\sqrt{n}}$ and in particular its special case $\beta = 1$ . The regime $ \beta < 1 $ can thus be considered as the regime where the independent CLT dominates at the level of the speed (and the randomisation is marginally relevant); the regime  $ \beta = 1 - \frac{\gamma}{\sqrt{n}}$ will be the one where the randomisation dominates at the level of the fluctuations (non Gaussian behaviour). From this perspective, the transition is interesting: there is a competition between randomisation and sums of independent random variables. 
\end{remark}

\subsection{The case $ \beta_n = 1 \pm \frac{\gamma}{\sqrt{n}} $, $ \gamma > 0 $}\label{SubSec:Smooth:BetaTrans}

\begin{theorem}[Fluctuations of the \textit{unnormalised} magnetisation for $ \beta_n = 1 - \frac{\gamma}{\sqrt{n}} $, $ \gamma \in \Rr $] \label{Theorem:MagnetisationBeta_n}
Let $ \Fb_{\!\! \gamma} $ be a random variable of law given by
\begin{align*}
\Prob{\Fb_{\!\! \gamma} \in dx } :=  \frac{1}{\Ze_{\Fb_{\! \gamma}}} e^{-\frac{x^4}{12} - \gamma \frac{ x^2}{2}} dx, \qquad  \Ze_{\Fb_{\! \gamma}} := \int_{\Rr}  e^{-\frac{x^4}{12} - \gamma\frac{ x^2}{2}} dx
\end{align*}

Then, for all $ h \in \Ce^1 $ with $ \norm{h}_\infty, \norm{h'}_\infty < \infty $ it holds that
\begin{align}\label{Eq:Beta_n:Speed}
\abs{ \Esp{ h\prth{ \frac{M_n^{(\beta_n )}}{n^{3/4}}  } } -  \Esp{ h( \Fb_{\!\! \gamma} ) } } \leq  \prth{ \frac{  C }{ \sqrt{n} } + O\prth{ \frac{ 1 }{ n^{3/4} } } } \prth{ \vphantom{a^{a^a}} \norm{h}_\infty +  \norm{h'}_\infty },  
\end{align}
where $ C > 0 $ is an explicit constant.
\end{theorem}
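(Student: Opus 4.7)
The approach is to mimic the proof of theorem~\ref{Theorem:MagnetisationBetaEqual1}, tracking at each stage the contribution of the perturbation $\beta_n = 1 - \gamma/\sqrt{n}$. Starting from the randomised Berry--Ess\'een inequality \eqref{Ineq:RandomisedBerryEsseen} rescaled by $n^{3/4}$, one first obtains
\begin{align*}
\abs{ \Esp{ h\prth{ M_n^{(\beta_n)}/n^{3/4} } } - \Esp{ h\prth{ \Me_n^{(\beta_n)}/n^{3/4} } } } \leq \frac{C \norm{h'}_\infty}{n^{3/4}}.
\end{align*}
Setting $\Fb_{\!\! n,\gamma} := n^{1/4} \Tb_{\! n}^{(\beta_n)}$, the rescaled surrogate has exactly the form \eqref{Eq:MagSurrogateBeta=1} with $\Fb_{\!\! n}$ replaced by $\Fb_{\!\! n,\gamma}$, and it remains to compare it with $\Fb_{\!\! \gamma}$. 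As in the critical case, the direct triangle inequality loses a factor $n^{1/4}$, so we insert the totally-dependent Bernoulli coupling \eqref{Def:TotallyDependentCouplingBer} via $2\Pb - 1 := n^{-1/4} \Fb_{\!\! n,\gamma}'$, $2\Qb - 1 := n^{-1/4} \Fb_{\!\! n,\gamma}$ with $\Fb_{\!\! n,\gamma}' - \Fb_{\!\! n,\gamma} := -G\, n^{-1/4} \Unens{ \abs{G} \leq \sqrt{n} }$.

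\textbf{Taylor expansion of the perturbed density.} The one genuinely new computation is the expansion at the origin of the log-density of $\Fb_{\!\! n,\gamma}$. Writing
\begin{align*}
\Phi_{n,\gamma}(x) := \frac{n}{2\beta_n} \Argtanh\prth{ \frac{x}{n^{1/4}} }^{\!2} + \prth{ \frac{n}{2} + 1 } \ln\prth{ 1 - \frac{x^2}{\sqrt{n}} },
\end{align*}
and combining $\Argtanh(t)^2 = t^2 + \tfrac{2}{3} t^4 + O(t^6)$, $\ln(1-t^2) = -t^2 - \tfrac{1}{2} t^4 - O(t^6)$, and $1/\beta_n = 1 + \gamma/\sqrt{n} + O(1/n)$, one obtains
\begin{align*}
-\Phi_{n,\gamma}(x) = -\gamma \frac{x^2}{2} - \frac{x^4}{12} + O\prth{ \frac{1}{\sqrt{n}} }
\end{align*}
uniformly on compact sets. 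The extra Gaussian factor $-\gamma x^2/2$ comes precisely from $\frac{\sqrt{n}}{2}(1 - 1/\beta_n) x^2 = -\gamma x^2/2 + O(1/\sqrt{n})$ (which vanished when $\beta = 1$) and identifies the limit as $\Fb_{\!\! \gamma}$ with density proportional to $e^{-\gamma x^2/2 - x^4/12}$.

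\textbf{Error decomposition.} With $\Fb_{\!\! \gamma}$ identified, the remaining analysis proceeds verbatim as in the $\beta = 1$ case: split the total error into $\deltab_n^{(1)} + \deltab_n^{(2)} + \deltab_n^{(3)}$, with $\deltab_n^{(1)}$ bounded via the totally-dependent coupling, $\deltab_n^{(2)}$ via Berry--Ess\'een, and $\deltab_n^{(3)}$ via the final surrogate comparison. Decompose $\deltab_n^{(3)}$ further into a surrogate-removal term controlled by $\Esp{ \Fb_{\!\! \gamma}^2 }/n^{3/4}$ (once $L^2$ convergence of $\Fb_{\!\! n,\gamma}$ to $\Fb_{\!\! \gamma}$ is verified through the density convergence above and integrability of $e^{-\gamma x^2/2 - x^4/12}$) and an $L^1$-density comparison. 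For the latter, introduce an intermediate $\widehat{\Fb}_{\!\! n,\gamma}$ with density proportional to $\exp\prth{ -\tfrac{x^4}{12}(1 - 6/n) + x^2/\sqrt{n} - \gamma x^2/2 }$, picking up a $\Esp{ \Fb_{\!\! \gamma}^2 }/\sqrt{n}$ perturbative contribution plus a $1/\sqrt{n}$ residue from a sixth-order Taylor estimate of $\kappab_{n,\gamma} := \Phi_{n,\gamma} - \widehat{\Phi}_{n,\gamma}$ (whose first five derivatives at zero still vanish, the new quadratic $-\gamma x^2/2$ being absorbed into $\widehat{\Phi}_{n,\gamma}$ by construction).

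\textbf{Main obstacle.} The principal new technical ingredient will be an asymptotic estimate for the partition function,
\begin{align*}
n^{1/4} \Ze_{n,\beta_n} = \Ze_{\Fb_{\!\gamma}} \prth{ 1 + O\prth{ \frac{1}{\sqrt{n}} } },
\end{align*}
generalising lemma~\ref{Lemma:AsymptoticRenormConstant2} to the transitional regime $\beta_n = 1 - \gamma/\sqrt{n}$. The saddle remains at the origin but the curvature is quartic, and a non-trivial Gaussian perturbation of size $O(1)$ survives inside the dominant $e^{-x^4/12}$ weight; the associated Laplace-type estimate is where most of the book-keeping will concentrate. Once this lemma is in place, all the bounds above assemble into \eqref{Eq:Beta_n:Speed} with an explicit constant $C$ whose $\gamma$-dependence is absorbed in the $O(1/n^{3/4})$ remainder.
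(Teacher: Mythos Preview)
Your proposal follows the paper's proof essentially step for step: the same coupling \eqref{Def:CouplingFFprime} adapted to $\Fb_{\!\! n,\gamma}$, the same three-term decomposition $\deltab_n^{(\gamma,1)}+\deltab_n^{(\gamma,2)}+\deltab_n^{(\gamma,3)}$, the further splitting of $\deltab_n^{(\gamma,3)}$ via an intermediate density $\widehat{\Phi}_{n,\gamma}$ and a sixth-order Taylor bound on $\kappab_{n,\gamma}$, and the identification of the partition-function estimate (lemma~\ref{Lemma:AsymptoticRenormConstant3}) as the one genuinely new lemma.

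One small technical correction: with your specific choice of intermediate density, namely $\widehat{\Phi}_{n,\gamma}(x)=\tfrac{x^4}{12}(1-6/n)-x^2/\sqrt{n}+\gamma x^2/2$, the claim that $\kappab_{n,\gamma}^{(k)}(0)=0$ for $k\leq 5$ fails. The exact second and fourth Taylor coefficients of $\Phi_{n,\gamma}$ at zero are $\alpha_n^{(2)}:=\gamma/\beta_n-2/\sqrt{n}$ and $\alpha_n^{(1)}:=(4-3\beta_n)/\beta_n-6/n$ (see \eqref{Eq:TaylorRandomisation:beta_n}), which differ from your $\gamma-2/\sqrt{n}$ and $1-6/n$ by terms of order $1/\sqrt{n}$. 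The paper therefore takes $\widehat{\Phi}_{n,\gamma}(x):=\alpha_n^{(1)}\tfrac{x^4}{12}+\alpha_n^{(2)}\tfrac{x^2}{2}$ with these exact $\beta_n$-dependent coefficients, so that the sixth-order remainder argument goes through cleanly. With your coefficients the residual quadratic and quartic mismatches would contaminate the Taylor bound; the fix is simply to use $\alpha_n^{(1)},\alpha_n^{(2)}$ instead.
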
  


\begin{proof} 
With $ \Tb_{\! n}^{(\beta)} $ defined in \eqref{Def:TnBeta}, set
\begin{align}\label{Def:FnGamma}
\Fb_{\!\! n, \gamma}  :=  n^{1/4} \, \Tb_{\! n}^{(\beta_n)}, \qquad \beta_n := 1 - \tfrac{\gamma}{\sqrt{n}}.
\end{align}

Then, the law of $ \Fb_{\!\! n, \gamma} $ is given by 
\begin{align}\label{Def:Law:FnGamma}
\begin{aligned}
\Prob{ \Fb_{\!\! n, \gamma} \in dt} & = f_{n, \beta_n}\prth{ \frac{t}{n^{1/4} } } \frac{dt}{n^{1/4} } \\
                &  =  \frac{1}{ n^{1/4} \,\Ze_{n, \beta_n} }  e^{ -\frac{n}{2 \beta_n} \Argtanh(t/n^{1/4})^2 - (n/2 + 1) \ln(1 - t^2/\sqrt{n}) } \Unens{ \abs{t} \leq n^{1/4} } dt.
\end{aligned}
\end{align}

Randomising \eqref{Ineq:RandomisedBerryEsseen} and rescaling it by a factor $ n^{3/4} $ (which amounts to do $ h \leftarrow h(\frac{\cdot}{n^{3/4}}) $ hence $ h' \leftarrow \frac{1}{n^{3/4} } h'(\frac{\cdot}{n^{3/4}}) $) yields
\begin{align}\label{Eq:RandomisedBeta=1}
\abs{ \Esp{ h\prth{ \frac{M_n^{(\beta_n)}}{n^{3/4} }  } } - \Esp{ h\prth{ \frac{\Me_n^{(\beta_n)}}{n^{3/4} } } } } \leq \frac{C}{n^{3/4} } \norm{h'}_\infty,
\end{align}
with
\begin{align*}
\frac{\Me_n^{(\beta_n)} }{n^{3/4} } =  \frac{G}{ n^{1/4} } \, \sqrt{1 - ( \Tb_{\! n}^{(\beta_n)} )^2 } + n^{1/4} \,  \Tb_{\! n}^{(\beta_n)}, 
\end{align*}
or rewritten to
\begin{align}\label{Eq:MagSurrogateBeta=1}
\frac{\Me_n^{(\beta_n)} }{n^{3/4} } =  \frac{G}{ n^{1/4} } \, \sqrt{1 - \frac{\Fb_{\!\! n, \gamma}^2}{\sqrt{n} } } + \Fb_{\!\! n, \gamma},
\end{align}
where $ G \sim \Ns(0, 1) $ independent of $ M_n^{(\beta_n)} $ and $ \Fb_{\!\! \gamma} $, and the triangle inequality gives then the analogue of \eqref{Ineq:MagSpeedWithSurrogate}
\begin{align}\label{Ineq:MagSpeedWithSurrogate2}
\abs{ \Esp{ h\prth{ \frac{M_n^{(\beta_n)}}{n^{3/4} } } } - \Esp{ h(\Fb_{\!\!\gamma}) } } \leq \frac{C}{n^{3/4} } \norm{h'}_\infty + \abs{ \Esp{ h\prth{ \frac{\Me_n^{(\beta_n)}}{n^{3/4} } } } - \Esp{ h(\Fb_{\!\!\gamma})} }.
\end{align}


Nevertheless, one sees from the expression of \eqref{Eq:MagSurrogateBeta=1} that if $ \Fb_{\!\! n, \gamma} \to \Fb_{\!\!\gamma} $ in law, 
\begin{align*}
\frac{\Me_n^{(\beta_n)} }{ n^{3/4} } & =  \frac{G}{ n^{1/4} }  \sqrt{1 - \frac{\Fb_{\!\! n, \gamma}^2}{\sqrt{n} } }   + \Fb_{\!\! n, \gamma} \\
              & \approx \Fb_{\!\!\gamma} + \frac{G}{ n^{1/4} } \prth{  1 - \frac{\Fb_{\!\!\gamma}^2}{2 \sqrt{n} }  } \\
              & \approx \Fb_{\!\!\gamma} + \frac{G}{n^{1/4} } + O_\Pp\prth{ \frac{1}{\sqrt{n} } }.
\end{align*}

As a result, one will always have at best $ \Esp{ h\prth{ \frac{\Me_n^{(\beta_n))}}{n^{3/4} } } } - \Esp{ h(\Fb_{\!\!\gamma})} = O\prth{ \frac{ \norm{h'}_\infty }{n^{1/4} } } $ which is incompatible with the results of \cite{ChatterjeeShao, EichelsbacherLoewe} that give a speed in $ O\prth{\frac{1}{\sqrt{n} } } $ for the case $ \gamma = 0 $. Such a discrepancy between this result and \eqref{Eq:RandomisedBeta=1} shows that we have used the ``wrong'' random variable to compare to, when using the triangle inequality. One should instead incorporate another random variable at a distance $ \frac{1}{\sqrt{n} } $ to decrease the distance in $ \frac{1}{n^{1/4} } $, possibly at the cost of increasing the distance in $ \frac{1}{n^{3/4}} $ in \eqref{Eq:RandomisedBeta=1}. Such a replacement can be performed by introducing another related surrogate.

\medskip

Recall that for i.i.d.'s $ (B_k)_k $, one has $ S_n(p) := \sum_{k = 1}^n B_k(p) $. A classical representation of $ B_k(p) $ is given by 
\begin{align}\label{Def:TotallyDependentCouplingBer}
B_k(p) := \Unens{U_k < p} - \Unens{U_k > p} = 2 \, \Unens{U_k < p} - 1, \qquad (U_k)_{k \geq 1} \sim \textrm{i.i.d.}\Us\prth{ [0, 1] },
\end{align}
and this representation allows in addition to visualise the randomisation of the parameter $p$ in a functional way.

One can thus use a coupling of $ S_n(p) $ and $ S_n(q) $ using these uniform random variables. This very coupling is said to be \textit{totally dependent} in the sense that these are the same uniform random variables that are used (i.e. $ B_k(q) $ is a measurable function of $ B_k(p) $ and vice versa).

Define for $ p, q \in [0, 1] $
\begin{align*}
\lambda_n & := 2 n (p - q) = \Esp{ S_n(p) - S_n(q) }, \\
S_n(p, q) & := S_n(p) - S_n(q) - \lambda_n, \\
\sigma(p)^2 & := 4 p(1 - p).
\end{align*}

Then,  
\begin{align*}
\abs{ \Esp{ h\prth{ S_n(p) } } - \Esp{ h\prth{ S_n(q) + \lambda_n } } } & \leq \norm{h'}_\infty \Esp{ \abs{ S_n(p, q) } } \\
                & \leq \norm{h'}_\infty \sqrt{ \Esp{ \abs{ S_n(p, q) }^2 } } \\
                & =: \norm{h'}_\infty \sqrt{ n} \sqrt{ \sigma(p)^2 + \sigma(q)^2 - 2 \rho(p, q) },
\end{align*}
with  
\begin{align*}
\centered{B}_k(p) := B_k(p) - \Esp{B_k(p)} = 2\prth{ \Unens{U_k < p} - p }
\end{align*}
and   
\begin{align*}
\rho(p, q) & := \Esp{  \centered{B}(p) \! \centered{B}(q) } =  4 \, \Esp{  \prth{ \Unens{U < p} - p } \prth{ \Unens{U < q} - q } } \\
              & = 4 \, \Esp{ \Unens{ U < p \wedge q } - p \Unens{U < q} - q \Unens{U < p} + p q } = 4 ( p \wedge q - pq), 
\end{align*}
with $p\wedge q := \min\{p, q\}$. We thus have
\begin{align*}
\sigma(p)^2 + \sigma(q)^2 - 2 \rho(p, q) & = \Esp{ \prth{ \centered{B}(p) - \centered{B}(q) }^2 } \\
                & = 4 \prth{ p(1 - p) + q(1 - q) - 2 (p\wedge q - pq)  } \\
                & = 4 \prth{ p + q - 2 \prth{p \wedge q} - \crochet{  p^2 + q^2 - 2pq } } \\
                & = 4 \prth{ \abs{ p - q } - \abs{ p - q }^2 } = 4 \abs{ p - q } (1 - \abs{ p - q }).
\end{align*}

We can now write with $ \lambdab_n := 2 n (\Pb - \Qb) $ and $ (\Pb, \Qb) $ choosen at random independently from $ (U_k)_k $
\begin{align*}
\deltab_n^\gamma(h) & := \abs{ \Esp{ h\prth{ \frac{S_n(\Pb)}{n^{3/4} } } } - \Esp{ h(\Fb_{\!\!\gamma}) } } \\
               & \leq \, \, \abs{ \Esp{ h\prth{ \frac{S_n(\Pb)}{n^{3/4} } } } - \Esp{ h\prth{ \frac{S_n(\Qb) + \lambdab_n }{n^{3/4} } } } } \\
               & \hspace{+1.5cm} + \abs{ \Esp{ h\prth{ \frac{S_n(\Qb) + \lambdab_n }{n^{3/4} } } } - \Esp{ h\prth{ \frac{ 2\, \Qb (1 - \Qb) \sqrt{n} \, G + n (2\Qb - 1) + \lambdab_n }{n^{3/4} } } } } \\
               & \hspace{+4cm} + \abs{ \Esp{ h\prth{ \frac{ 2\, \Qb (1 - \Qb) \sqrt{n} \, G + n (2\Qb - 1) + \lambdab_n }{n^{3/4} } } } - \Esp{ h(\Fb_{\!\!\gamma}) } } \\
               & =: \deltab^{(\gamma, 1)}_n(h) + \deltab^{(\gamma, 2)}_n(h) + \deltab^{(\gamma, 3)}_n(h). 
\end{align*}

For $ \Fb_{\!\! n, \gamma} $ with law given by \eqref{Def:Law:FnGamma}, we define a random variable $ \Fb_{\!\! n, \gamma}' $ with a law and a dependency to $ \Fb_{\!\! n, \gamma} $ to be defined later. Set moreover
$$
2\Pb - 1  := n^{-1/4} \Fb_{\!\! n, \gamma}', \quad
2\Qb - 1 := n^{-1/4} \Fb_{\!\! n, \gamma}. 
$$

Then, one has
\begin{align*}
\frac{ 2\, \Qb (1 - \Qb) \sqrt{n} \, G + n (2\Qb - 1) + \lambdab_n }{n^{3/4} } & = \frac{G}{n^{1/4} } \sqrt{ 1 - \frac{ \Fb_{\!\! n, \gamma}^2 }{\sqrt{n} } } + \Fb_{\!\! n, \gamma} + \frac{ \lambdab_n }{n^{3/4} } \\
                & = \frac{G}{n^{1/4} } \sqrt{ 1 - \frac{ \Fb_{\!\! n, \gamma}^2 }{\sqrt{n} } } + \Fb_{\!\! n, \gamma} + (\Fb_{\!\! n, \gamma}' - \Fb_{\!\! n, \gamma} ),
\end{align*}
since
\begin{align*}
\lambdab_n = 2 n (\Pb - \Qb) =  n \times n^{-1/4} (\Fb_{\!\! n, \gamma}' - \Fb_{\!\! n, \gamma} ).
\end{align*}

One would then want to couple $ (\Fb_{\!\! n, \gamma} , \Fb_{\!\! n, \gamma}')  $ by setting $ \Fb'_{\!\! n, \gamma} - \Fb_{\!\! n, \gamma} = - \frac{G}{ n^{1/4} } $ with the same $ G $ that defines the surrogate magnetisation in $ \deltab^{(\gamma, 3)}_n(h) $, nevertheless, one also needs to remember that $ p, q \in [0, 1] $, hence that $ (p - q) \in [-1, 1] $. One thus sets
\begin{align}\label{Def:CouplingFFprime}
\Fb_{\!\! n, \gamma}' - \Fb_{\!\! n, \gamma} := - \frac{G}{ n^{1/4} } \Unens{ \abs{G} \leq \sqrt{n} },
\end{align}
which defines the law of $ \Fb_{\!\! n, \gamma}' $ with a convolution of the law \eqref{Def:Law:FnGamma} and the truncated Gaussian law. Notice that \eqref{Def:CouplingFFprime} yields
\begin{align*}
2(\Pb - \Qb) = \frac{1}{n^{1/4} } \prth{ \Fb_{\!\! n, \gamma}' - \Fb_{\!\! n, \gamma} } = - \frac{ G }{ \sqrt{n} } \Unens{ \abs{G} \leq \sqrt{n} }.
\end{align*}

\medskip
\noindent $ \bullet $ \textbf{\underline{Bound on $ \deltab^{(\gamma, 1)}_n $:}} One has
\begin{align*}
\deltab^{(\gamma, 1)}_n(h) & \leq  \frac{ \norm{h'}_\infty }{n^{3/4} } \times 2\sqrt{n} \sqrt{ \Esp{ \abs{ \Pb - \Qb } (1 - \abs{ \Pb - \Qb }) } } \\
                  & =  \frac{ \norm{h'}_\infty }{ \sqrt{2 n} } \prth{ 1 + O\prth{ \frac{1}{\sqrt{n} } } }.
\end{align*}

\medskip
\noindent $ \bullet $ \textbf{\underline{Bound on $ \deltab^{(\gamma, 2)}_n $:}} Setting $ g := h( \cdot + \lambdab_n/n^{3/4}) $ and using the Berry-Ess\'een bound \eqref{Ineq:RandomisedBerryEsseen} rescaled by a factor $ n^{3/4} $ still gives
\begin{align*}
\deltab^{(\gamma, 2)}_n(h) \leq  \frac{ C }{ n^{3/4} } \Esp{ \norm{g'}_\infty } \leq \frac{ C }{ n^{3/4} } \norm{h'}_\infty. 
\end{align*}

\medskip
\noindent $ \bullet $ \textbf{\underline{Bound on $ \deltab^{(\gamma, 3)}_n $:}} Now, one has 
\begin{align*}
\frac{ 2\, \Qb (1 - \Qb) \sqrt{n} \, G + n (2\Qb - 1) + \lambdab_n }{n^{3/4} }  = \frac{G}{n^{1/4} } \prth{  \sqrt{ 1 - \frac{ \Fb_{\!\! n, \gamma}^2 }{\sqrt{n} } } - 1 } + \Fb_{\!\! n, \gamma}  + \frac{G}{n^{1/4} } \Unens{ \abs{G} > \sqrt{n} }.
\end{align*}

Set
\begin{align*}
\Fe_{n, \gamma} & := \frac{G}{n^{1/4} } \prth{  \sqrt{ 1 - \frac{ \Fb_{\!\! n, \gamma}^2 }{\sqrt{n} } } - 1 }, \\
\deltab^{(\gamma, 4)}_n(h) & := \abs{ \Esp{ h\prth{ \Fe_{n, \gamma} + \Fb_{\!\! n, \gamma} } } - \Esp{ h(\Fb_{\!\! n, \gamma}) } }, \\
\deltab^{(\gamma, 5)}_n(h) & := \abs{ \Esp{ h\prth{ \Fb_{\!\!n, \gamma} } } - \Esp{ h(\Fb_{\!\!\gamma}) } }, 
\end{align*}
so that
\begin{align*}
\deltab^{(\gamma, 3)}_n(h) & = \abs{ \Esp{ h\prth{ \Fe_{n,\gamma} + \Fb_{\!\! n, \gamma} + \frac{G}{n^{1/4} } \Unens{ \abs{G} > \sqrt{n} } } } - \Esp{ h(\Fb_{\!\!\gamma}) } } \\
                  & \leq \norm{h'}_\infty \Esp{ \frac{\abs{G} }{n^{1/4} } \Unens{ \abs{G} > \sqrt{n} } }  + \deltab^{(\gamma, 4)}_n(h) + \deltab^{(\gamma, 5)}_n(h) \\
                  & \leq \norm{h'}_\infty \frac{ \sqrt{ \Esp{ \abs{G}^2 } } }{n^{1/4}  }  \sqrt{ \Prob{ \abs{G} > \sqrt{n} } } + \deltab^{(\gamma, 4)}_n(h) + \deltab^{(\gamma, 5)}_n(h) \\
                  & \leq \norm{h'}_\infty \frac{ \sqrt{ \Esp{ G^{ 2k } } } }{ n^{k + 1/4} } + \deltab^{(\gamma, 4)}_n(h) + \deltab^{(\gamma, 5)}_n(h)
\end{align*}
for all $ k \geq 1 $ using the triangle, Taylor, Cauchy-Schwarz and Markov inequalities.

\medskip
\noindent $ \bullet $ \textbf{\underline{Bound on $ \deltab^{(\gamma, 4)}_n $:}}
\begin{align*}
\deltab^{(\gamma, 4)}_n(h) & = \abs{ \Esp{ h\prth{ \Fe_{n, \gamma} + \Fb_{\!\! n, \gamma} } } - \Esp{ h(\Fb_{\!\! n, \gamma}) } } \\
                  & \leq \norm{h'}_\infty \Esp{ \abs{\Fe_{n, \gamma}} } \\
                  & \leq \norm{h'}_\infty \frac{ \Esp{ \abs{G} } \Esp{ \Fb_{\!\! n, \gamma}^2 } }{  n^{3/4} }  \leq \norm{h'}_\infty \frac{ \Esp{ \Fb_{\!\! n, \gamma}^2 } }{ n^{3/4} },
\end{align*}
using $ \abs{ 1 - \sqrt{1 - u} } \leq u $ for $ u \in (0, 1) $ and $ \Esp{\abs{G}} \leq \sqrt{\Esp{G^2}} = 1 $.

To achieve to bound $ \deltab^{(4)}_n(h) $, we will now show that $ \Fb_{\!\! n, \gamma}^2 \to \Fb_{\!\!\gamma}^2 $ in law and in $ L^1 $, hence that $  \Esp{ \Fb_{\!\! n, \gamma}^2 } = \Esp{ \Fb_{\!\!\gamma}^2 } + o(1) $.

\medskip
\noindent $ \bullet $ \textbf{\underline{Convergence in law $\Fb_{\!\! n, \gamma}^2 \to \Fb_{\!\!\gamma}^2 $:}} The law of $ \Fb_{\!\! n, \gamma} := n^{1/4} \, \Tb_{\! n}^{(\beta_n)} $ is given by the rescaling of $\nu_{n, \beta_n}$ in \eqref{Eq:DeFinettiMeasureT}~:
\begin{align*}
\Prob{ \Fb_{\!\! n, \gamma} \in dt} & = f_{n, \beta_n}\prth{ \frac{t}{n^{1/4} } } \frac{dt}{n^{1/4} }  \\
                   & =  \frac{1}{ n^{1/4} \,\Ze_{n, \beta_n} }  e^{ -\frac{n}{2\beta_n} \Argtanh(t/n^{1/4})^2 - (n/2 + 1) \ln(1 - t^2/\sqrt{n}) } \Unens{ \abs{t} \leq n^{1/4} } dt.
\end{align*}

Lemma~\ref{Lemma:AsymptoticRenormConstant3} gives
\begin{align*}
n^{1/4} \,\Ze_{n, \beta_n}  = \Ze_{\Fb_{\!\gamma}} + O\prth{ \frac{1}{\sqrt{n} } }
\end{align*}
and a Taylor expansion in $0$ yields with SageMath
%
%
%
%
%
%
%
%
%
%
%
\begin{align}\label{Eq:TaylorRandomisation:beta_n}
\Phi_{n,\gamma}(t) & := \frac{n}{2 \beta_n} \Argtanh\prth{ \frac{t}{n^{1/4}} }^2 + \prth{ \frac{n}{2} + 1 } \ln\prth{1 - \frac{t^2}{\sqrt{n}} } \nonumber \\
                                & = \prth{\frac{1 - \beta_n}{\beta_n} \sqrt{n} - \frac{2}{\sqrt{n}}} \frac{t^2}{2} + \frac{t^4}{12}\prth{ \frac{4 - 3 \beta_n}{\beta_n} - \frac{6}{n} } + O\prth{ \frac{t^6}{n^{3/2}} }  \nonumber \\
                                & =  \prth{ \frac{\gamma}{\beta_n} - \frac{2}{\sqrt{n}}} \frac{t^2}{2} + \frac{t^4}{12}\prth{ \frac{4 - 3 \beta_n}{\beta_n} - \frac{6}{n} } + O\prth{ \frac{t^6}{n^{3/2}} }.  
\end{align}

Since $ \beta_n \to 1 $, this implies the convergence in law $ \Fb_{\!\! n, \gamma} \to \Fb_{\!\!\gamma} $ by looking at the densities, and the result by square integrability of $ \Fb_{\!\!\gamma} $.

\medskip
\noindent $ \bullet $ \textbf{\underline{Bound on $ \deltab^{(\gamma, 5)}_n $:}} In the same vein as for $ \beta < 1 $, one has for all $ \varepsilon \in (0, 1) $ and setting $ \overline{\varepsilon} := 1 - \varepsilon $
\begin{align*}
\deltab^{(\gamma, 5)}_n(h) & := \abs{ \Esp{ h\prth{ \Fb_{\!\! n, \gamma} } } - \Esp{ h(\Fb_{\!\!\gamma}) } } \\
                   & \leq \int_{(-\overline{\varepsilon} n^{1/4}, \overline{\varepsilon} n^{1/4} ) } \abs{h} \abs{ f_{\Fb_{\! n, \gamma} } - f_{\Fb_{\!\!\gamma}} } + \abs{ \Esp{ h(\Fb_{\!\!\gamma}) \Unens{ \abs{\Fb_{\! \gamma}} > \overline{\varepsilon} n^{1/4} } } } \\
                   & \leq \norm{h}_\infty \prth{ \norm{ f_{\Fb_{\! n, \gamma} } - f_{\Fb_{\! \gamma}}  }_{ L^1([-\overline{\varepsilon} n^{1/4}, \overline{\varepsilon} n^{1/4} ]) } + \Prob{ \abs{\Fb_{\!\!\gamma}} > \overline{\varepsilon} n^{1/4} } } \\
                   & \leq \norm{h}_\infty \prth{ \norm{ f_{\Fb_{\! n, \gamma} } - f_{\Fb_{\! \gamma}}  }_{ L^1([-\overline{\varepsilon} n^{1/4}, \overline{\varepsilon} n^{1/4} ] )  } +  \frac{ \Esp{\Fb_{\!\!\gamma}^{4k}} }{(1 - \varepsilon)^{4k} \, n^k } },
\end{align*}
using the triangle inequality and the Markov inequality for all $ k \geq 1 $.

\medskip

We now show that
\begin{align}\label{Ineq:FnToF}
\norm{ f_{\Fb_{\! n, \gamma} } - f_{\Fb_{\!\gamma}}  }_{ L^1([-\overline{\varepsilon} n^{1/4}, \overline{\varepsilon} n^{1/4} ] ) } 
                \leq \deltab^{(\gamma, 6)}_n + \frac{ 2 \gamma \Esp{\Fb_{\!\! \gamma}^2} + 3(\gamma^2 - 2) \Esp{\Fb_{\!\! \gamma}^4}}{ 6 \sqrt{n} } + O\prth{\frac{1}{n} },
\end{align}
where $ \deltab^{(\gamma, 6)}_n $ will be defined in \eqref{Def:DeltaN_gamma_6}.

\medskip

In view of \eqref{Eq:TaylorRandomisation:beta_n}, we introduce the random variable $ \widehat{\Fb}_{\!\! n, \gamma} $ defined by the density
\begin{align*}
f_{ \widehat{\Fb}_{\! n,\gamma} }(x) & := \frac{1}{\widehat{\Ze}_{n,\gamma} } e^{- \widehat{\Phi}_{n,\gamma}(x)}, \qquad \widehat{\Ze}_{n,\gamma} := \int_\Rr e^{- \widehat{\Phi}_{n,\gamma} },  \\
\widehat{\Phi}_{n,\gamma}(x) & := \alpha_n^{(1)} \frac{x^4}{12} + \alpha_n^{(2)} \frac{x^2}{2 }, \quad \alpha_n^{(1)} := \frac{4 - 3 \beta_n}{\beta_n} - \frac{6}{n}, \qquad \alpha_n^{(2)} := \frac{\gamma}{\beta_n} - \frac{2}{\sqrt{n}}.   
\end{align*}

Recalling that $ \Phi_{n, \gamma} $ is defined in \eqref{Eq:TaylorRandomisation:beta_n}, we also define
\begin{align*}
\Phi_\gamma & := \widehat{\Phi}_{\infty,\gamma} = \Phi_{\infty,\gamma} : x \mapsto \frac{x^4}{12} + \gamma \frac{x^2}{2}.
\end{align*}

Then, one can replace $ \Fb_{\!\! \gamma} $ by $ \widehat{\Fb}_{\! n,\gamma} $ up to $ O(1/\sqrt{n}) $ by writing 
\begin{align*}
\norm{ f_{\Fb_{\!  n, \gamma} } - f_{\Fb_{\! \gamma}}  }_{ L^1([-\overline{\varepsilon} n^{1/4}, \, \overline{\varepsilon} n^{1/4} ] ) } 
                    \leq 
                          \norm{ f_{\Fb_{\! n,\gamma} } - f_{\widehat{\Fb}_{\! n,\gamma}} }_{ L^1([-\overline{\varepsilon} n^{1/4}, \, \overline{\varepsilon} n^{1/4} ] )  } 
                         + \norm{ f_{\widehat{\Fb}_{\! n,\gamma}} - f_{\Fb_{\! \gamma}} }_{ L^1(\Rr) }.
\end{align*}

Set
\begin{align}\label{Def:DeltaN_gamma_6}
\deltab_n^{(\gamma,6)} & := \norm{ f_{\Fb_{\! n,\gamma} } - f_{\widehat{\Fb}_{\! n,\gamma}} }_{ L^1([-\overline{\varepsilon} n^{1/4}, \, \overline{\varepsilon} n^{1/4} ] )  }.   
\end{align}

One has
\begin{align*}
\norm{ f_{\widehat{\Fb}_{\! n,\gamma}} - f_{\Fb_{\! \gamma}} }_{ L^1(\Rr) } 
               & := \int_\Rr \abs{ \frac{1}{\widehat{\Ze}_{n, \gamma} } e^{- \widehat{\Phi}_{n, \gamma} } - \frac{1}{\Ze_{\Fb_{\! \gamma}} } e^{- \Phi_\gamma  } } =  \frac{1}{\Ze_{\Fb_{\! \gamma}} } \int_\Rr \abs{ \frac{\Ze_{\Fb_{\! \gamma}}}{\widehat{\Ze}_{n, \gamma} } e^{- \widehat{\Phi}_{n, \gamma} + \Phi_\gamma } - 1}  e^{- \Phi_\gamma  } \\
               & = \frac{1}{\Ze_{\Fb_{\! \gamma}} } \int_\Rr \abs{ \prth{ \frac{\Ze_{\Fb_{\! \gamma}}}{\widehat{\Ze}_{n,\gamma} } - 1} e^{- \widehat{\Phi}_{n, \gamma} + \Phi_\gamma } + e^{- \widehat{\Phi}_{n, \gamma} + \Phi_\gamma } - 1}  e^{- \Phi_\gamma  } \\
               & \leq \abs{ \frac{\Ze_{\Fb_{\! \gamma}} }{\widehat{\Ze}_{n, \gamma} } - 1} \frac{1}{\Ze_{\Fb_{\! \gamma}} } \int_\Rr e^{- \widehat{\Phi}_{n, \gamma} + \Phi_\gamma } e^{- \Phi_\gamma  } + \frac{1}{\Ze_{\Fb_{\! \gamma}} } \int_\Rr \abs{   e^{- \widehat{\Phi}_{n, \gamma} + \Phi_\gamma } - 1}  e^{- \Phi_\gamma  } \\
               & = \frac{1}{\Ze_{\Fb_{\! \gamma}}} \abs{  \Ze_{\Fb_{\! \gamma}} - \widehat{\Ze}_{n, \gamma}  }  
                       + \Esp{ \abs{ e^{ \,\prth{1 - \alpha_n^{(1)}} \frac{ \Fb_{\! \gamma}^4}{12} \, + \,  \prth{\gamma - \alpha_n^{(2)}}\frac{\Fb_{\!\gamma}^2}{ 2 } } - 1 } } \\
               & = \frac{1}{\Ze_{\Fb_{\! \gamma}}} \abs{ \int_\Rr \prth{e^{- \widehat{\Phi}_{n, \gamma}} - e^{-\Phi_\gamma} }  } 
                        + \Esp{ \abs{ e^{ \,\prth{1 - \alpha_n^{(1)}} \frac{ \Fb_{\! \gamma}^4}{12} \, + \, \prth{\gamma - \alpha_n^{(2)}}\frac{\Fb_{\!\gamma}^2}{ 2 } } - 1 } } \\
               & \leq 2\,  \Esp{ \abs{ e^{ \,\prth{1 - \alpha_n^{(1)}} \frac{ \Fb_{\! \gamma}^4}{12} \, + \, \prth{\gamma - \alpha_n^{(2)}}\frac{\Fb_{\!\gamma}^2}{ 2 } } - 1 } }.
\end{align*}

Using  
\begin{align*}
1 - \alpha_n^{(1)} & = \frac{6}{n} + 4 \prth{1 - \frac{1}{\beta_n}} = \frac{4 \gamma}{\sqrt{n}} + O\prth{ \frac{1}{n} }, \\
\gamma - \alpha_n^{(2)} & = \gamma \prth{1 - \frac{1}{\beta_n}} - \frac{2}{\sqrt{n}} = \frac{\gamma^2 - 2}{\sqrt{n}} +  O\prth{ \frac{1}{n} },
\end{align*}
we get
\begin{align*}
\Esp{ \abs{ e^{ \,\prth{1 - \alpha_n^{(1)}} \frac{ \Fb_{\! \gamma}^4}{12} \, + \, \prth{\gamma - \alpha_n^{(2)}}\frac{\Fb_{\!\gamma}^2}{ 2 } } - 1 } }
               & = \Esp{ \abs{ 1 + \frac{4 \gamma}{\sqrt{n}}  \frac{\Fb_{\!\! \gamma}^4}{12} + \frac{(\gamma^2 - 2)}{\sqrt{n}} \frac{\Fb_{\!\! \gamma}^2}{2} - 1 + O\prth{ \frac{\Fb_{\!\! \gamma}^4}{n} } } }  \\
               & =   \frac{ 2 \gamma \Esp{\Fb_{\!\! \gamma}^2} + 3(\gamma^2 - 2) \Esp{\Fb_{\!\! \gamma}^4}}{ 6 \sqrt{n} } +  O\prth{ \frac{1}{n} }.
\end{align*}

We thus have
\begin{align*}
\deltab_n^{(\gamma, 5)}(h)
              \leq \norm{h}_\infty \prth{ \deltab_n^{(\gamma, 6)}(h)  
                   + \frac{ 2 \gamma \Esp{\Fb_{\!\! \gamma}^2} + 3(\gamma^2 - 2) \Esp{\Fb_{\!\! \gamma}^4}}{ 6 \sqrt{n} } +  O\prth{ \frac{1}{n} } 
                   + \frac{ \Esp{\Fb_{\!\! \gamma}^{4k}} }{(1 - \varepsilon)^{4k} \, n^k } }.
\end{align*}
and we now estimate the remaining term.

\medskip
\noindent $ \bullet $ \textbf{\underline{Bound on $ \deltab^{(\gamma, 6)}_n $:}} We have

\begin{align*}
\deltab_n^{(\gamma,6)} & := \norm{ f_{\Fb_{\! n,\gamma} } - f_{\widehat{\Fb}_{\! n,\gamma}} }_{ L^1([-\overline{\varepsilon} n^{1/4}, \, \overline{\varepsilon} n^{1/4} ] )  }   \\
                & = \int_{ -\overline{\varepsilon} n^{1/4} }^{ \overline{\varepsilon} n^{1/4} } \abs{ f_{\Fb_{\!  n, \gamma} } - f_{\widehat{\Fb}_{\!\! n, \gamma}} } = \int_{ -\overline{\varepsilon} n^{1/4} }^{ \overline{\varepsilon} n^{1/4} } \abs{ 1 - \frac{f_{\Fb_{\!  n, \gamma} }}{ f_{\widehat{\Fb}_{\!\! n, \gamma}} } } f_{\widehat{\Fb}_{\!\! n, \gamma}}  \\
                & = \int_{ -\overline{\varepsilon} n^{1/4} }^{ \overline{\varepsilon} n^{1/4} } \abs{ 1 - \frac{ \widehat{\Ze}_{n, \gamma} }{n^{1/4}\Ze_{n, \beta_n} }   e^{- (\Phi_{n, \gamma} - \widehat{\Phi}_{n, \gamma} ) } } f_{\widehat{\Fb}_{\!\! n, \gamma}}  \\
                & = \int_{ -\overline{\varepsilon} n^{1/4} }^{ \overline{\varepsilon} n^{1/4} } \abs{ 1 - e^{- (\Phi_{n, \gamma} - \widehat{\Phi}_{n, \gamma} ) } + \prth{ 1 - \frac{ \widehat{\Ze}_{n, \gamma} }{n^{1/4}\Ze_{n, \beta_n} } }   e^{- (\Phi_{n, \gamma} - \widehat{\Phi}_{n, \gamma} ) } } f_{\widehat{\Fb}_{\!\! n, \gamma}}  \\
                & \leq \abs{  1 - \frac{ \widehat{\Ze}_{n, \gamma} }{n^{1/4}\Ze_{n, \beta_n} } }  \int_{ -\overline{\varepsilon} n^{1/4} }^{ \overline{\varepsilon} n^{1/4} }   e^{- (\Phi_{n, \gamma} - \widehat{\Phi}_{n, \gamma} ) }   f_{\widehat{\Fb}_{\!\! n, \gamma}}    +   \int_{ -\overline{\varepsilon} n^{1/4} }^{ \overline{\varepsilon} n^{1/4} } \abs{ 1 - e^{- (\Phi_{n, \gamma} - \widehat{\Phi}_{n, \gamma} ) }  } f_{\widehat{\Fb}_{\!\! n, \gamma}}   \\
                & \leq \abs{  1 - \frac{ \widehat{\Ze}_{n, \gamma} }{n^{1/4}\Ze_{n, \beta_n} } } \frac{n^{1/4}\Ze_{n, \beta_n} }{ \widehat{\Ze}_{n, \gamma} }  +   \int_{ -\overline{\varepsilon} n^{1/4} }^{ \overline{\varepsilon} n^{1/4} } \abs{ 1 - e^{- (\Phi_{n, \gamma} - \widehat{\Phi}_{n, \gamma} ) }  } f_{\widehat{\Fb}_{\!\! n, \gamma}}.
\end{align*}

One has in addition with $\gamma_n  := \frac{\alpha_n^{(2)}}{ \sqrt{ \alpha_n^{(1)} } }   - \gamma$
\begin{align*}
\frac{\widehat{\Ze}_{n,\gamma}}{\Ze_{\Fb_{\! \gamma}}} & := \frac{1}{\Ze_{\Fb_{\! \gamma}} } \int_\Rr e^{-\widehat{\Phi}_{n,\gamma}} \\
                   & = \frac{1}{\Ze_{\Fb_{\! \gamma}} } \int_\Rr e^{- \frac{x^4}{12}\alpha_n^{(1)} + \alpha_n^{(2)} \frac{x^2}{2 } } dx 
                   =  \frac{1}{\Ze_{\Fb_{\! \gamma}} \prth{ \alpha_n^{(1)} }^{\! 1/4 } } \int_\Rr e^{ - \frac{x^4}{12} - \gamma \frac{x^2}{2} +  \gamma_n \frac{x^2}{2} } dx \\ 
                   & = \frac{1}{ \prth{ \alpha_n^{(1)} }^{\! 1/4 } } \Esp{ e^{ \gamma_n \Fb_{\!\! \gamma}^2 } } 
                   = 1 + \frac{\gamma + (\gamma^2 + 2) \Esp{ \Fb_{\!\! \gamma}^2 }}{\sqrt{n} } + O\prth{\frac{1}{n} },
\end{align*}
since  
\begin{align*}
\gamma_n & := \frac{\alpha_n^{(2)}}{ \sqrt{ \alpha_n^{(1)} } }   - \gamma   = \gamma \prth{ \prth{\frac{1}{1 +  \frac{\gamma}{\sqrt{n}}}  + \frac{2}{\gamma \sqrt{n}} } \prth{ \frac{1}{1 - \frac{4\gamma}{\sqrt{n}} +        O\prth{ \frac{1}{n} } } }^{\! 1/2} - 1 } \\
                  &  = \frac{\gamma^2 + 2}{\sqrt{n}} + O\prth{ \frac{1}{n} } 
\end{align*}
and 
\begin{align*}
\frac{1}{ \prth{ \alpha_n^{(1)} }^{\! 1/4 } } & = \frac{1}{ \prth{1 - \frac{4 \gamma}{\sqrt{n}} + O\prth{ \frac{1}{n} } }^{1/4} } 
                 = 1 + \frac{ \gamma}{\sqrt{n}} + O\prth{ \frac{1}{n} }.
\end{align*}

Lemma~\ref{Lemma:AsymptoticRenormConstant3} gives  
\begin{align*}
\frac{\Ze_{\Fb_{\! \gamma}}}{n^{1/4}\Ze_{n, \beta_n} } = 1 + O\prth{ \frac{1}{\sqrt{n} } },
\end{align*}
implying
\begin{align*}
 1 - \frac{ \widehat{\Ze}_{n,\gamma} }{n^{1/4}\Ze_{n, \beta_n} }   =   1 - \frac{ \widehat{\Ze}_{n,\gamma} }{\Ze_{\Fb_{\! \gamma}} } \frac{\Ze_{\Fb_{\! \gamma}}}{n^{1/4}\Ze_{n, \beta_n} }   = O\prth{\frac{1}{\sqrt{n} } }.
\end{align*}

Define
\begin{align*}
\deltab_n^{(\gamma,7)} 
                 := \int_{ -\overline{\varepsilon} n^{1/4} }^{ \overline{\varepsilon} n^{1/4} } \abs{ 1 - e^{- (\Phi_{n,\gamma} - \widehat{\Phi}_{n,\gamma} ) }  } f_{\widehat{\Fb}_{\! n,\gamma}}
                 = \Esp{ \abs{ 1 - e^{- (\Phi_n - \widehat{\Phi}_n )(\widehat{\Fb}_{\!\! n, \gamma}) }  } \Unens{ \abs{\widehat{\Fb}_{\!\! n, \gamma}} \leq \overline{\varepsilon} n^{1/4} } }
\end{align*}

Then,
\begin{align}\label{Ineq:DeltaN_gamma_6:Intermediate}
\deltab_n^{(\gamma,6)}   
                   \leq   \int_{ -\overline{\varepsilon} n^{1/4} }^{ \overline{\varepsilon} n^{1/4} } \abs{ 1 - e^{- (\Phi_{n,\gamma} - \widehat{\Phi}_{n,\gamma} ) }  } f_{\widehat{\Fb}_{\! n,\gamma}} 
                   + \abs{  \frac{n^{1/4}\Ze_{n, \beta_n} }{ \widehat{\Ze}_{n,\gamma} } - 1 }    
                 =  \deltab_n^{(\gamma,7)} +  O\prth{\frac{1}{\sqrt{n} } }.
\end{align}

\medskip
\noindent $ \bullet $ \textbf{\underline{Bound on $ \deltab^{(\gamma, 7)}_n $:} }
Define
\begin{align*}
\kappab_{n,\gamma}(x) & := \Phi_{n,\gamma}(x) - \widehat{\Phi}_{n,\gamma}(x)   \\
                                        & = \frac{n}{2\beta_n} \Argtanh\prth{ \frac{x}{n^{1/4}} }^2 + \prth{ \frac{n}{2} + 1 } \ln\prth{1 - \frac{x^2}{\sqrt{n}} }  - \alpha_n^{(1)} \frac{x^4}{12} - \alpha_n^{(2)} \frac{x^2}{2 }.
\end{align*}

The Taylor expansion \eqref{Eq:TaylorRandomisation:beta_n} shows that $ \kappab_{n,\gamma}^{(k)}(0) = 0 $ for $ k \in \intcrochet{0, 5} $, hence 
\begin{align*}
\kappab_{n,\gamma}(x)  = \frac{x^6}{5!} \int_0^1 (1 - \alpha)^5 \kappab_{n,\gamma}^{(6)}(\alpha x) d\alpha.
\end{align*}

An analysis of $ \kappab_{n,\gamma}^{(6)} $ with SageMath \cite{SageMath} in the same vein as for the other cases shows that $ \kappab_{n,\gamma}^{(6)} \geq 0 $ on $ (-\overline{\varepsilon} n^{1/4} \,  \overline{\varepsilon} n^{1/4} ) $, is an odd function and is strictly increasing on $ ( 0, \overline{\varepsilon} n^{1/4}) $. As a result, one can write
\begin{align*}
\deltab_n^{(\gamma,7)}  & :=   \int_{ -\overline{\varepsilon} n^{1/4} }^{ \overline{\varepsilon} n^{1/4} } \abs{ 1 - e^{- \kappab_{n,\gamma} }  } f_{\widehat{\Fb}_{\! n,\gamma}} \\ 
               &  = 2 \int_0^{ \overline{\varepsilon} n^{1/4} } \prth{ 1 - e^{- \kappab_{n,\gamma} }  } f_{\widehat{\Fb}_{\! n,\gamma} } \\
         & \leq   \frac{2}{6!} \, \kappab_{n,\gamma}^{(6)}( \overline{\varepsilon} n^{1/4} ) \, \Esp{ \widehat{\Fb}_{\!\! n, \gamma}^6 } 
                 = \frac{2}{6!} \, \kappab_{n,\gamma}^{(6)}( \overline{\varepsilon} n^{1/4} ) \, \Esp{ \Fb_{\!\! \gamma}^6 }\prth{ 1  + O\prth{ \frac{1}{\sqrt{n} } } }.
\end{align*}

Moreover, one can write
\begin{align*}
\kappab_{n,\gamma}(x) = \frac{n}{\beta_n} \, W_1\prth{ \frac{x}{n^{1/4}} }  + n \, W_2\prth{ \frac{x}{n^{1/4}} } + W_3\prth{ \frac{x}{n^{1/4}} } - \frac{4 - 3 \beta_n}{\beta_n} \frac{x^4}{12} - \frac{\gamma}{\beta_n} \frac{x^2}{2},
\end{align*}
with $ W_1, W_2 $ and $W_3$ explicit and indefinitely differentiable on $ (0, \overline{\varepsilon} n^{1/4} ) $. As a result, 
\begin{align*}
\kappab_{n,\gamma}^{(6)}(x) = \frac{1}{\beta_n \sqrt{n} } \, W_1^{(6)}\prth{ \frac{x}{n^{1/4}} } + \frac{1}{\sqrt{n} } \, W_2^{(6)}\prth{ \frac{x}{n^{1/4}} }  + \frac{1}{n^{3/2} } W_3^{(6)}\prth{ \frac{x}{n^{1/4}} }  
\end{align*}
and 
\begin{align*}
\kappab_{n,\gamma}^{(6)}(\overline{\varepsilon}n^{1/4}) = \frac{1}{\beta_n \sqrt{n} } \, W_1^{(6)}\prth{ \overline{\varepsilon} }  + \frac{1}{\sqrt{n} } \, W_2^{(6)}\prth{ \overline{\varepsilon} }  + \frac{1}{n^{3/2} } W_3^{(6)}\prth{ \overline{\varepsilon} }. 
\end{align*}

Choosing $ \varepsilon = \frac{3}{4} $ for instance gives then
\begin{align*}
\kappab_{n,\gamma}^{(6)}(\overline{\varepsilon}n^{1/4}) & = \frac{1}{\beta_n \sqrt{n} }  \, W_1^{(6)}\prth{ \frac{1}{4} }  + \frac{1}{\sqrt{n} }  \, W_2^{(6)}\prth{ \frac{1}{4} } + O\prth{ \frac{1}{n^{3/2} } } \\ 
               & = \frac{1}{\sqrt{n}} \prth{ W_1^{(6)}\prth{ \frac{1}{4} }  + W_2^{(6)}\prth{ \frac{1}{4} } }+ O\prth{ \frac{1}{n } }
\end{align*}
and
\begin{align}\label{Ineq:DeltaN_gamma_7}
\deltab_n^{(\gamma,7)}  
                & \leq  \frac{1}{\sqrt{n} } \times \frac{2}{6!} \, \prth{ W_1^{(6)}\prth{ \frac{1}{4} }  + W_2^{(6)}\prth{ \frac{1}{4} } }\, \Esp{ \Fb_\gamma^6 } + O\prth{ \frac{1}{n } } \notag \\
                & =: \frac{K_7^\gamma}{\sqrt{n} } + O\prth{ \frac{1}{n } }.
\end{align}

\medskip
\noindent $ \bullet $ \textbf{\underline{Final bound on $ \deltab^\gamma_n $:}} In the end, one has
\begin{align*}
\deltab^\gamma_n(h)  & \leq  \deltab_n^{(\gamma, 1)}(h) + \deltab_n^{(\gamma, 2)}(h) + \deltab_n^{(\gamma, 3)}(h) \\
              & \leq  \deltab_n^{(\gamma, 1)}(h) + \deltab_n^{(\gamma, 2)}(h) +  \norm{h'}_\infty \frac{ \sqrt{ \Esp{ G^{ 2k } } } }{ n^{k + 1/4} } + \deltab^{(\gamma, 4)}_n(h) + \deltab^{(\gamma, 5)}_n(h) \\
              & \leq  \deltab_n^{(\gamma, 1)}(h) + \deltab_n^{(\gamma, 2)}(h) +  \norm{h'}_\infty \frac{ \sqrt{ \Esp{ G^{ 2k } } } }{ n^{k + 1/4} } + \deltab^{(\gamma, 4)}_n(h) \\
              & \hspace{+2cm} + \norm{h}_\infty \prth{ \deltab_n^{(\gamma, 6)}  + \frac{ 2 \gamma \Esp{\Fb_{\!\! \gamma}^2} + 3(\gamma^2 - 2) \Esp{\Fb_{\!\! \gamma}^4}}{ 6 \sqrt{n} } +  O\prth{ \frac{1}{n} } +  \frac{ \Esp{\Fb_{\!\!\gamma}^{4k}} }{(1 - \varepsilon)^{4k} \, n^k }  } \\
              & = \deltab_n^{(\gamma, 1)}(h) + \deltab_n^{(\gamma, 2)}(h) +  \norm{h'}_\infty \frac{ \sqrt{ \Esp{ G^{ 2k } } } }{ n^{k + 1/4} } + \deltab^{(\gamma, 4)}_n(h) \\
              & \hspace{+1cm} + \norm{h}_\infty \prth{ \frac{K_7^\gamma}{\sqrt{n} } + O\prth{ \frac{1}{n  } } + \frac{ 2 \gamma \Esp{\Fb_{\!\! \gamma}^2} + 3(\gamma^2 - 2) \Esp{\Fb_{\!\! \gamma}^4}}{ 6 \sqrt{n} } + O\prth{ \frac{1}{n} } +  \frac{  \Esp{\Fb_{\!\! \gamma}^{4k}} }{(1 - \varepsilon)^{4k} \, n^k } } \\
              & \leq \frac{ \norm{h'}_\infty }{ \sqrt{2 n} } \prth{ 1 + O\prth{ \frac{1}{\sqrt{n} } } } + \frac{ C }{ n^{3/4} } \norm{h'}_\infty + \norm{h'}_\infty \frac{ \sqrt{ \Esp{ G^{ 2k } } } }{ n^{k + 1/4} } +  \norm{h'}_\infty \frac{ \Esp{ \Fb_{\!\! \gamma}^2 } + o(1) }{ n^{3/4} } \\
              & \hspace{+2cm} + \norm{h}_\infty \prth{  \frac{K_7^\gamma}{\sqrt{n} }      + \frac{ 2 \gamma \Esp{\Fb_{\!\! \gamma}^2} + 3(\gamma^2 - 2) \Esp{\Fb_{\!\! \gamma}^4}}{ 6 \sqrt{n} } +  O\prth{ \frac{1}{n} } +  \frac{ \Esp{\Fb_{\!\! \gamma}^{4k}} }{(1 - \varepsilon)^{4k} \, n^k } },
\end{align*}
hence the result.
\end{proof}


\begin{remark}
Note that without using the totally dependent coupling for $ (S_n(p), S_n(q) ) $, one could have taken independent random variables and used the bound 
$$ \Esp{ \abs{S_n(p) - S_n(q) } } \leq \sqrt{ \Esp{ (S_n(p) - S_n(q) )^2 } } = \sqrt{n} \sqrt{\sigma(p)^2 + \sigma(q)^2 },
$$ but this bound does not provide any useful gain. This particular choice of coupling is thus a critical ingredient of the proof.
\end{remark}


\begin{remark}\label{Rk:DominationRandomisation:Smooth}
As announced in Remark~\ref{Rk:DominationCLT:Smooth}, this is the randomisation that dominates the distance in the case $ \beta = 1 \pm \frac{\gamma}{\sqrt{n}} $. This is already visible at the level of the fluctuations, since they are not Gaussian.
\end{remark}


\begin{remark}
It seems interesting to note the discrepancy between the case $ \gamma \geq 0 $ where the derivative of the function $ x \mapsto -\frac{x^4}{12} - \gamma \frac{x^2}{2} $ only vanishes in 0 and the case $ \gamma < 0 $ where the derivative has two additional zeroes in $ \pm \sqrt{-3\gamma} $. As a result, the density $ f_{\Fb_{\! \gamma} } $ has two humps in this case, which is close to the last case treated in \S~\ref{SubSec:Smooth:Beta>1}. 
\end{remark}

\medskip
\subsection{The case $ \beta = 1 $}\label{SubSec:Smooth:Beta=1}

We mention here, as a corollary, the case $ \gamma = 0 $ of Theorem~\ref{Theorem:MagnetisationBeta_n} due to its historical importance.

\medskip
\begin{theorem}[Fluctuations of the \textit{unnormalised} magnetisation for $ \beta = 1 $]\label{Theorem:MagnetisationBetaEqual1}
Let $ \Fb $ be a random variable of law given by $ \Prob{\Fb \in dx } :=  \frac{1}{\Ze_\Fb} e^{-\frac{x^4}{12}} dx $ with $ \Ze_\Fb := \int_{\Rr} e^{-\frac{x^4}{12}} dx = 3^{1/4} 2^{-1/2} \Gamma(1/4)  $. 

Then, for all $ h \in \Ce^1 $ with $ \norm{h}_\infty, \norm{h'}_\infty < \infty $ it holds that 
\begin{align}\label{Eq:Beta=1:Speed}
\abs{ \Esp{ h\prth{ \frac{M_n^{(1 )}}{n^{3/4}}  } } -  \Esp{ h( \Fb ) } } \leq  \prth{ \frac{  C }{ \sqrt{n} } + O\prth{ \frac{ 1 }{ n^{3/4} } } } \prth{ \vphantom{a^{a^a}} \norm{h}_\infty +  \norm{h'}_\infty },   
\end{align}
where $ C > 0 $ is an explicit constant.
\end{theorem}

\medskip
\subsection{The case $ \beta > 1 $}\label{SubSec:Smooth:Beta>1}

We consider the transcendent equation
\begin{align}\label{Def:CriticalPointEquation}
\tanh(x) = \frac{x}{\beta}, \qquad \beta > 1.
\end{align}

An easy study shows that there exist two solutions to this equation denoted by $ \pm x_\beta $ with $ x_\beta > 1 $. We define
\begin{align*}
\Xb_{\!\beta}  \sim \Ber_{\pm x_\beta}\prth{\tfrac{1}{2}}, \qquad \Teb^{(\beta)} \sim \Ber_{\pm t_\beta}\prth{\tfrac{1}{2}}, \qquad t_\beta := \frac{x_\beta}{\beta} = \tanh(x_\beta). 
\end{align*}


\begin{theorem}[Fluctuations of the \textit{unnormalised} magnetisation for $ \beta > 1 $]\label{Theorem:MagnetisationBetaBigger1}
If $ \beta > 1 $, one has for all $ h \in \Ce^1 $ with $ \norm{h}_\infty, \norm{h'}_\infty < \infty $
\begin{align}\label{Eq:Beta>1:Speed}
\abs{ \Esp{ h\prth{ \frac{M_n^{(\beta )}}{n } } } -  \Esp{ h\prth{ \Teb^{(\beta)} } } } \leq \prth{  \frac{C }{\sqrt{n} }  + O_\beta\prth{\frac{1}{n}} } \norm{h'}_\infty 
\end{align}
for an explicit constant $ C  > 0 $. 
\end{theorem}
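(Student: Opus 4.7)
The plan is to follow the surrogate template developed for the previous cases, but to exploit the fact that for $\beta > 1$ the De Finetti density $f_{n,\beta}$ of \eqref{Eq:DeFinettiMeasureT} is a \emph{double-well} density concentrating on $\ensemble{\pm m_\beta}$ rather than on $0$. Applying the fundamental inequality \eqref{Ineq:RandomisedBerryEsseen} with the substitution $h \leftarrow h(\cdot/n)$ yields
\begin{align*}
\abs{ \Esp{ h\prth{ \tfrac{1}{n} M_n^{(\beta)} } } - \Esp{ h\prth{ \tfrac{1}{n}\Me_n^{(\beta)} } } } \leq \frac{ C \norm{h'}_\infty }{n},
\end{align*}
which is negligible relative to the target rate $1/\sqrt{n}$. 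By the triangle inequality it therefore suffices to show that
$\widetilde{\delta}_n(h) := \abs{ \Esp{ h(\Me_n^{(\beta)}/n) } - \Esp{ h(\BbB_\beta) } } = O(\norm{h'}_\infty / \sqrt{n})$.

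Since $f_{n,\beta}$ is continuous and symmetric, $\Prob{\Tb_{\! n}^{(\beta)} = 0 } = 0$ and $\sign(\Tb_{\! n}^{(\beta)})$ is uniform on $\ensemble{\pm 1}$; I therefore use the coupling $\BbB_\beta := m_\beta \sign(\Tb_{\! n}^{(\beta)})$, which has the correct marginal distribution. Combining this with \eqref{Def:SurrogateMnBeta}, the bound $\sqrt{1 - (\Tb_{\! n}^{(\beta)})^2} \leq 1$, and the Lipschitz estimate $\widetilde{\delta}_n(h) \leq \norm{h'}_\infty \, \Esp{ \abs{\Me_n^{(\beta)}/n - \BbB_\beta} }$ gives
\begin{align*}
\widetilde{\delta}_n(h) \leq \norm{h'}_\infty \prth{ \frac{ \Esp{\abs{G}} }{\sqrt{n}} + \Esp{ \abs{\Tb_{\! n}^{(\beta)} - m_\beta \sign(\Tb_{\! n}^{(\beta)}) } } },
\end{align*}
and the symmetry $f_{n,\beta}(-t) = f_{n,\beta}(t)$ converts the second expectation into $2 \int_0^1 \abs{t - m_\beta} f_{n,\beta}(t)\, dt$. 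The whole problem thus reduces to showing this last integral is $O(1/\sqrt{n})$.

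For this I use Laplace's method. Writing $f_{n,\beta}(t) = \Ze_{n,\beta}^{-1} (1-t^2)^{-1} e^{-n \psi_\beta(t)}$ with $\psi_\beta(t) := \frac{1}{2\beta} \Argtanh(t)^2 + \frac{1}{2} \ln(1-t^2)$, the critical-point equation $\psi_\beta'(t) = [\Argtanh(t) - \beta t]/[\beta(1-t^2)] = 0$ has on $(0,1)$ the unique solution $t = m_\beta$ (i.e.\ the positive fixed point of $t = \tanh(\beta t)$), at which
\begin{align*}
\psi_\beta''(m_\beta) = \frac{1 - \beta(1-m_\beta^2)}{\beta(1-m_\beta^2)^2} > 0,
\end{align*}
the positivity following from $\beta(1 - m_\beta^2) = \beta \operatorname{sech}^2(\beta m_\beta) < 1$, itself a consequence of the strict concavity of $\tanh$ on $(0,\infty)$ at the non-trivial fixed point $\beta m_\beta$ of $x = \beta \tanh(x)$. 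A Laplace expansion around $m_\beta$ then produces the desired $O(1/\sqrt{n})$ bound, with constant depending only on $\beta$ through $\psi_\beta''(m_\beta)$ and $m_\beta$. The main obstacle, in the same spirit as in the proof of Theorem~\ref{Theorem:MagnetisationBetaSmaller1}, will be to split $(0,1)$ into a \emph{main interval} $(m_\beta - \delta, m_\beta + \delta)$, on which a Taylor expansion of $\psi_\beta$ at $m_\beta$ gives a clean Gaussian integral, and \emph{remaining intervals} $(0, m_\beta - \delta) \cup (m_\beta + \delta, 1)$, on which one has to combine the lower bound $\psi_\beta(t) - \psi_\beta(m_\beta) \geq c(\beta)\, \delta^2$ with a crude handling of the logarithmic blow-up of $\Argtanh(t)^2$ as $t \to 1^-$ in order to exhibit the exponential smallness of the tail contributions (in both numerator and normalisation $\Ze_{n,\beta}$) compared to the Gaussian integral around $m_\beta$. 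The factor $2$ coming from the symmetric minimum at $-m_\beta$ inside $\Ze_{n,\beta}$ cancels with the corresponding factor from the symmetry reduction to $(0,1)$, so no additional bookkeeping is required.
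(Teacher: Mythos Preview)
Your proposal is correct and follows the same surrogate-then-Laplace template as the paper, including the same first two reductions (the $O(1/n)$ surrogate step and the $\norm{h'}_\infty/\sqrt{n}$ contribution from $G\sqrt{1-(\Tb_{\! n}^{(\beta)})^2}/\sqrt{n}$). One difference is worth noting. Instead of running Laplace directly on $(0,1)$ with $\psi_\beta(t) = \tfrac{1}{2\beta}\Argtanh(t)^2 + \tfrac{1}{2}\ln(1-t^2)$ and its boundary singularity, the paper first substitutes $y = \Argtanh(t)$, so that $\Tb_{\! n}^{(\beta)} = \tanh(\Rb_n^{(\beta)})$ with $\Rb_n^{(\beta)}$ having density proportional to $e^{-n\varphi_\beta(y)}$ on all of $\Rr$, where $\varphi_\beta(y) = \tfrac{y^2}{2\beta} - \log\cosh(y)$ is smooth with globally bounded $\varphi_\beta''$. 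The problem then becomes bounding $\abs{\Esp{\widetilde{h}(\Rb_n^{(\beta)})} - \Esp{\widetilde{h}(\Xb_{\!\beta})}}$ with $\widetilde{h} = h\circ\tanh$, and the Laplace expansion around $x_\beta$ goes through by a single dominated-convergence argument, with no need for the main/remaining-interval split you anticipate; the bound $\norm{\widetilde{h}'}_\infty \leq \norm{h'}_\infty$ (since $\norm{\tanh'}_\infty = 1$) brings everything back to $h$. Your explicit coupling $\BbB_\beta = m_\beta\,\sign(\Tb_{\! n}^{(\beta)})$ is a tidy way to package what the paper achieves through the symmetry $\int_{\Rr_+} d\mu_{n,\beta} = \tfrac{1}{2}$; both lead to the same first-moment estimate $\Esp{\abs{\Rb_n^{(\beta)} - x_\beta}\,\Un_{\Rb_n^{(\beta)} > 0}} = O(1/\sqrt{n})$ (equivalently your $\int_0^1 \abs{t-m_\beta} f_{n,\beta}(t)\,dt = O(1/\sqrt{n})$).
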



\begin{proof}
Rescaling $ M_n^{(\beta)} $ by $ n $ and substituting in \eqref{Ineq:RandomisedBerryEsseen} yields
\begin{align*}
\abs{ \Esp{ h\prth{ \frac{M_n^{(\beta)}}{ n } } } - \Esp{ h\prth{ \frac{\Me_n^{(\beta)}}{ n } } } } \leq \frac{C'}{ n } \norm{h'}_\infty
\end{align*}
and the triangle inequality implies the following adaptation of \eqref{Ineq:MagSpeedWithSurrogate}:
\begin{align}\label{Ineq:MagSpeedWithSurrogate:beta>1}
\abs{ \Esp{ h\prth{ \frac{M_n^{(\beta)}}{ n } } } - \Esp{ h\prth{ \Teb^{(\beta)} } } } \leq \frac{C'}{n } \norm{h'}_\infty + \abs{ \Esp{ h\prth{ \frac{\Me_n^{(\beta)}}{n } } } - \Esp{ h\prth{ \Teb^{(\beta)} } } }.
\end{align}

Moreover, one has
\begin{align*}
\frac{\Me_n^{(\beta)} }{ n } =  \frac{G}{\sqrt{n} } \, \sqrt{1 - ( \Tb_{\! n}^{(\beta)} )^2 } +  \Tb_{\! n}^{(\beta)}  
\end{align*}
and, since $ \Tb_{\! n}^{(\beta)} \in [-1, 1] $ a.s., one has $ \frac{\abs{G}}{\sqrt{n} } \, \sqrt{1 - ( \Tb_{\! n}^{(\beta)} )^2 } \leq \frac{\abs{G}}{\sqrt{n} } \to 0 $ in law, hence
\begin{align*}
\abs{ \Esp{ h\prth{ \frac{\Me_n^{(\beta)}}{n } } } - \Esp{ h\prth{ \Teb^{(\beta)} } } } & \leq  \frac{ \norm{h'}_\infty  }{\sqrt{n} } + \abs{ \Esp{ h\prth{ \Tb_{\! n}^{(\beta)} } - \Esp{ h\prth{ \Teb^{(\beta)} } } } }.
\end{align*}

It thus remains to show that $ \Tb_{\! n}^{(\beta)}\! \stackrel{\Le}{\longrightarrow} \Teb^{(\beta)} $ and to control its norm. For this, remark that 
\begin{align*}
\abs{ \Esp{ h\prth{ \Tb_{\! n}^{(\beta)} } - \Esp{ h\prth{ \Teb^{(\beta)} } } } } & = \abs{ \Esp{ h\prth{ \tanh(\Rb_{\! n}^{(\beta)} ) } - \Esp{ h\prth{ \tanh(\Xb_{\!\beta}  }) } } } \\
                 & =: \abs{ \Esp{ \widetilde{h}\prth{ \Rb_n^{(\beta)} } - \Esp{ \widetilde{h}\prth{ \Xb_{\!\beta} } } } }, \qquad \widetilde{h} := h \!\circ\! \tanh,
\end{align*}
where $ \Rb_n^{(\beta)} $ has a law given by
\begin{align}\label{Def:Law:RnBeta}
\mu_{n, \beta}(dy) := \Prob{ \Rb_n^{(\beta)} \in dy } = e^{ - n \varphi_\beta(y) } \frac{dy}{\Ze_{n, \beta} }, \qquad \varphi_\beta(y) := \frac{y^2}{2\beta} - \log\cosh(y).
\end{align}

We now adapt the Laplace method (in the easier case of a global minimum) to show that $ \mu_{n, \beta} \to \frac{1}{2}(\delta_{t_\beta} + \delta_{-t_\beta}) $ weakly. Since $ \int_{\Rr_+}d\mu_{n,\beta} = \int_{\Rr_-}d\mu_{n, \beta} = \frac{1}{2} $, one has 
\begin{align*}
\abs{ \Esp{ \widetilde{h}\prth{ \Rb_n^{(\beta)} } - \Esp{ \widetilde{h}\prth{ \Xb_{\!\beta} } } } } & = \int_{\Rr_+} \crochet{\widetilde{h}(y) - \widetilde{h}(x_\beta) } \mu_{n, \beta}(dy) + \int_{\Rr_-} \crochet{\widetilde{h}(y) - \widetilde{h}(-x_\beta) } \mu_{n, \beta}(dy) \\
              & =: \delta_n(\widetilde{h}) + \delta_n(\widetilde{h}(-\cdot) ).
\end{align*}

It is thus enough to treat the case of $ \delta_n(\widetilde{h}) $. For this, note that \eqref{Def:CriticalPointEquation} is equivalent to $ \varphi_\beta'(x_\beta) = 0 $, hence that for all $ x \geq 0 $
\begin{align*}
\varphi_\beta(x) = \varphi_\beta(x_\beta) +  (x - x_\beta)^2  \int_0^1 \varphi_\beta''(\alpha x + \overline{\alpha} x_\beta) \alpha d\alpha, \qquad \overline{\alpha } := 1 - \alpha.
\end{align*}

As a result
\begin{align*}
\delta_n(\widetilde{h}) & = e^{-n \varphi_\beta(x_\beta) } \int_{\Rr_+} \crochet{\widetilde{h}(x) - \widetilde{h}(x_\beta) } e^{ -n (\varphi_\beta(x) - \varphi_\beta(x_\beta) )  } \frac{dx}{\Ze_{n, \beta} } \\
                 & =  e^{-n \varphi_\beta(x_\beta) } \int_{\Rr_+} \crochet{\widetilde{h}(x) - \widetilde{h}(x_\beta) } e^{ -n (x - x_\beta)^2 \int_0^1 \varphi_\beta''(\alpha x + \overline{\alpha} x_\beta) \alpha d\alpha  } \frac{dx}{\Ze_{n, \beta} } \\
                 & = e^{-n \varphi_\beta(x_\beta) } \int_{-x_\beta \sqrt{n} }^{+\infty} \crochet{\widetilde{h}\prth{ x_\beta + \frac{w}{\sqrt{n} } } - \widetilde{h}(x_\beta) } e^{ - w^2  \int_0^1 \varphi_\beta''(\alpha w/\sqrt{n} + x_\beta) \alpha d\alpha  } \frac{dw}{\Ze_{n, \beta}\sqrt{n} } \\
                 & \leq e^{-n \varphi_\beta(x_\beta) } \frac{|\!|\widetilde{h}'|\!|_\infty }{\sqrt{n} } \int_{-x_\beta \sqrt{n} }^{+\infty} \abs{w}  e^{ - w^2   \int_0^1 \varphi_\beta''(\alpha w/\sqrt{n} + x_\beta) \alpha d\alpha  } \frac{dw}{\Ze_{n, \beta}\sqrt{n} } \\
                 & =: \frac{|\!|\widetilde{h}'|\!|_\infty }{\sqrt{n} }    \frac{  \Ze_{n, \beta} ^{(1)} \sqrt{n} }{\Ze_{n, \beta}\sqrt{n} },
\end{align*}
with \eqref{Def:ZnBetaK}.

Lemma~\ref{Lemma:AsymptoticRenormConstant4} for $ k = 1 $ gives moreover
\begin{align*}
\int_{- x_\beta   \sqrt{n} }^{+\infty} \abs{w}  e^{ - w^2   \int_0^1 \varphi_\beta''(\alpha w/\sqrt{n} + x_\beta) \alpha d\alpha  } dw  \tendvers{n}{+\infty} \int_\Rr \abs{w} e^{- \frac{w^2}{2} \varphi''(x_\beta) } dw = \sqrt{\frac{2\pi}{\varphi_\beta''(x_\beta)^3 } } \Esp{ \abs{G} },
\end{align*}
with $ G \sim \Ns(0, 1) $.

In the end, Lemma~\ref{Lemma:AsymptoticRenormConstant4} for $ k = 1 $ yields 
\begin{align*}
\delta_n(\widetilde{h}) & \leq  \frac{|\!|\widetilde{h}'|\!|_\infty }{\sqrt{n} }  \prth{ \sqrt{\frac{2\pi}{\varphi_\beta''(x_\beta) } } \Esp{ \abs{G} } + o(1) }\times \frac{e^{-n \varphi_\beta(x_\beta) } }{\sqrt{n} \Ze_{n,\beta} } \\
                 & = \frac{|\!|\widetilde{h}'|\!|_\infty }{2 \sqrt{n} } \prth{ \frac{\Esp{ \abs{G} } }{\varphi_\beta''(x_\beta)} + o(1) }.
\end{align*}

Last, $ |\!|\widetilde{h}'|\!|_\infty = \norm{ h' \!\circ  \tanh \times \tanh' }_\infty \leq \norm{ h' \!\circ\! \tanh }_\infty \norm{  \tanh' }_\infty = \norm{ h' }_\infty $ since $ \norm{\tanh'}_\infty = 1 $. Using $ C := C' + 1 $ concludes the proof.
\end{proof}


\begin{remark}\label{Rk:beta>1:conditioning}
One can also consider with \cite{EichelsbacherLoewe} the law of
\begin{align*}
(X_1^{(\beta, +)}, \dots, X_n^{(\beta, +)} ) & :\eqlaw  \prth{ X_1(\widetilde{V}_{n, \beta}), \dots, X_n(\widetilde{V}_{n, \beta}) \Big\vert \widetilde{V}_{n, \beta} \geq \tfrac{1}{2} }, \\
M_n^{(\beta, +)} & := \sum_{k = 1}^n X_k^{(\beta, +)}, 
\end{align*}
that corresponds to conditioning the randomisation $ \Tb_{\! n}^{(\beta)} $ to be positive. Equivalently, one ``zooms in'' a neighbourhood of the positive limiting magnetisation $ t_\beta $, as one only sees the density $ e^{-n \varphi_\beta}/\Ze_{n, \beta} $ on $ \Rr_+ $ and there is only one solution to \eqref{Def:CriticalPointEquation} in this set. 

The conditional version of the previous theorem gives the law of large number $ M_n^{(\beta, +)}/n \to t_\beta $ in probability and in $ L^2 $. The question of interest is then to ``push the chaos expansion to the next order'', i.e. to find the fluctuations, and the surrogate philosophy proves again helpful by considering
\begin{align*}
\Me_n^{(\beta, +)} := \sqrt{n} \, G \, \sqrt{1 - ( \Tb_{\! n}^{(\beta, +)} )^2 } +  n\, \Tb_{\! n}^{(\beta, +)}, \qquad  \Tb_{\! n}^{(\beta, +)} :=  \prth{ \Tb_{\! n}^{(\beta )} \Big\vert \Tb_{\! n}^{(\beta )} \geq 0 }.
\end{align*}

The same methodology applies to show that $ \Me_n^{(\beta, +)}/n \to t_\beta $ in probability and in $ L^2 $, and an easy corollary of the previous arguments shows that $ \Tb_{\! n}^{(\beta, +)} \to t_\beta $ in probability and in $ L^2 $, implying $ \Me_n^{(\beta, +)}/n \approx t_\beta + \sqrt{1 - t_\beta^2} \, G /\sqrt{n} $. In the end, one gets
\begin{align*}
\frac{ M_n^{(\beta, +)} - t_\beta \, n }{ \sqrt{n} \sqrt{1 - t_\beta^2} } \cvlaw{n}{+\infty} \Ns(0, 1),
\end{align*}
with a speed in $ O(1/\sqrt{n}) $ in smooth norm. Details are left to the interested reader.
\end{remark}

\section{Application to the Curie-Weiss magnetisation in Kolmogorov distance}\label{Sec:Kol}
\medskip

Recall the definition of the Kolmogorov distance given in~\eqref{Def:dKol}. 
%
%
%
%
This norm is a functional one in the same vein as the Fortet-Mourier one, using test functions $ h = \Un_{(-\infty, x]} $. The main difference is nevertheless the lack of differentiability of these test functions that prevents the use of \eqref{Ineq:BerryEsseen:Smooth}. An extension of Theorem~\ref{Theorem:MagnetisationBetaSmaller1} to the Kolmogorov distance case requires thus to find an analogue of this inequality for indicator functions. This is furnished by the classical Berry-Ess\'een bound for sums of i.i.d. random variables (see e.g. \cite[thm. 3.39]{Ross}) 
\begin{align}\label{Ineq:BerryEsseen:Kol}
\dKol\prth{ S_n, \, \sigma \sqrt{n} \, G + n \mu } = O\prth{ \frac{ 1}{\sqrt{n} } }.
\end{align}

Here, $ (Z_k)_k $ is a sequence of i.i.d. random variables satisfying $ \Esp{\abs{Z}^3} < \infty $, $ S_n := \sum_{k = 1}^n Z_k $, $ \Var(S_n) =: n \sigma^2 $, $ \Esp{S_n} = n \mu $ and $ G \sim \Ns(0, 1) $. 

Of course, a randomisation of this inequality will give the same result as in \S~\ref{Subsec:Theory:GeneralSurrogate}, since one has just changed test functions.

\medskip
\subsection{The case $ \beta < 1 $}\label{SubSec:Kol:Beta<1}

\begin{theorem}[Kolmogorov distance to the Gaussian for the \textit{unnormalised} magnetisation with $ \beta < 1 $]\label{Theorem:MagnetisationBetaSmaller1:dKol}
With $ \Zb_{\!\beta} \sim \Ns\prth{ 0, \frac{1}{1 - \beta } } $, and $ C > 0 $, we have
\begin{align}\label{Eq:Beta<1:Speed:dKol}
\dKol\prth{ \frac{M_n^{(\beta)} }{ \sqrt{n} }, \Zb_{\!\beta} } = \frac{C}{\sqrt{n}} + O_\beta\prth{ \frac{1}{ n } }.
\end{align}
\end{theorem}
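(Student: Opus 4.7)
The plan is to mirror the two-stage structure of the proof of theorem~\ref{Theorem:MagnetisationBetaSmaller1}, replacing the smooth Berry-Ess\'een estimate \eqref{Ineq:BerryEsseen:Smooth} by its Kolmogorov analogue \eqref{Ineq:BerryEsseen:Kol}. Since $\dKol$ is invariant under positive scaling, the triangle inequality
\[\dKol\!\prth{\frac{M_n^{(\beta)}}{\sqrt{n}}, \Zb_{\!\beta}} \leq \dKol\!\prth{\frac{M_n^{(\beta)}}{\sqrt{n}}, \frac{\Me_n^{(\beta)}}{\sqrt{n}}} + \dKol\!\prth{\frac{\Me_n^{(\beta)}}{\sqrt{n}}, \Zb_{\!\beta}}\]
reduces the proof to the control of the two summands on the right. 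For the first, I would apply \eqref{Ineq:BerryEsseen:Kol} conditionally on $p$ to $S_n(p)$ with $\Ber_{\{\pm 1\}}(p)$ increments and then average over $p \sim \widetilde{\nu}_{n,\beta}$, in the spirit of \S~\ref{Subsec:Theory:GeneralSurrogate}. Since the Berry-Ess\'een constant is absolute and the third-moment-to-variance ratio for $\Ber_{\{\pm 1\}}(p)$ is uniformly bounded in $p \in [0,1]$, the integration preserves the $O(1/\sqrt{n})$ rate and yields $\dKol(M_n^{(\beta)}, \Me_n^{(\beta)}) = O(1/\sqrt{n})$, which already matches the target rate and represents the ``independent CLT'' contribution.

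The second summand is where the smoothing-by-integration phenomenon announced in the introduction kicks in. Conditioning on $\Xb_{n,\beta} := \sqrt{n}\,\Tb_{\!n}^{(\beta)}$ and integrating out the independent $G \sim \Ns(0,1)$ that appears in the surrogate \eqref{Def:SurrogateMnBeta} produces the smooth representation
\[\Prob{\Me_n^{(\beta)}/\sqrt{n} \leq t} = \Esp{\Phi\!\prth{\frac{t-\Xb_{n,\beta}}{\sqrt{1-\Xb_{n,\beta}^2/n}}}}, \qquad \Prob{\Zb_{\!\beta} \leq t} = \Esp{\Phi(t-G_\beta)},\]
where $\Phi$ is the standard Gaussian CDF. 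The indicator entering the definition \eqref{Def:dKol} has thus been absorbed into $\Phi \in \Ce^\infty$, so that $\sup_t$ reduces to a \emph{smooth}-test comparison between the laws of $\Xb_{n,\beta}$ and $G_\beta$, for which the density $L^1$-estimate $\delta(g_n,g_\beta) = O_\beta(1/n)$ established in \eqref{Ineq:EstimateDeltaNg} is directly applicable.

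To close the argument I would split the difference of these two representations into the variance-correction term $\Esp{\Phi(y/\sigma_n(\Xb_{n,\beta})) - \Phi(y)}$ with $y := t-\Xb_{n,\beta}$ and $\sigma_n(x) := \sqrt{1-x^2/n}$, plus the density-comparison term $\int \Phi(t-x)\crochet{f_{\Xb_{n,\beta}}(x) - f_{G_\beta}(x)}dx$. The latter is bounded by $\norm{\Phi}_\infty$ times $\norm{f_{\Xb_{n,\beta}} - f_{G_\beta}}_{L^1} = O(1/n)$ through \eqref{Ineq:EstimateDeltaNg}. The former is handled by the Taylor estimate $|\Phi(y/\sigma)-\Phi(y)| \leq \norm{\varphi}_\infty |y|(1-\sigma)/\sigma$, which on the central region $|\Xb_{n,\beta}| \leq \sqrt{n}/2$ yields $O((|t|+1)/n)$ after integration against the bounded second and third moments of $\Xb_{n,\beta}$. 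The main obstacle is the boundary strip $|\Xb_{n,\beta}| \to \sqrt{n}$ where $\sigma_n \to 0$ and the Taylor bound degenerates: there I would crudely bound the integrand by $1$ and invoke the exponential decay of $f_{\Xb_{n,\beta}}$ inherited from its near-Gaussian shape to make this contribution negligible. The apparent $|t|$-dependence is harmless in $\dKol$ since for $|t| \gtrsim \sqrt{\log n}$ both CDFs lie within any negative power of $n$ of $0$ or $1$. Combining all pieces gives the announced $O(1/\sqrt{n})$ rate, with the dominant contribution coming -- as expected in the marginally relevant regime of remark~\ref{Rk:DominationCLT:Smooth} -- from the independent Berry-Ess\'een step rather than from the randomisation.
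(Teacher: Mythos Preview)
Your proposal is correct and follows the paper's approach almost exactly: the same triangle split via the surrogate, the same randomised Berry--Ess\'een step, and the same ``integrate out $G$'' trick turning the Kolmogorov indicators into the smooth $\Phi$, after which the density estimate \eqref{Ineq:EstimateDeltaNg} handles the $\Xb_{n,\beta}$-versus-$G_\beta$ comparison.

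Two small points. First, your claim that the Berry--Ess\'een ratio $\rho(p)/\sigma(p)^3$ is \emph{uniformly} bounded in $p\in[0,1]$ is false (it blows up like $(p(1-p))^{-1/2}$ at the endpoints); what makes the randomised bound $O(1/\sqrt{n})$ is rather that $\widetilde{\nu}_{n,\beta}$ decays fast enough near $p\in\{0,1\}$ to keep the expectation finite --- the paper glosses over this as well. Second, for the variance--correction term you bound $\abs{\Phi(y/\sigma)-\Phi(y)}$ by $\norm{\varphi}_\infty\,|y|(1-\sigma)/\sigma$, which leaves a residual $|t|$ and forces your extra large-$|t|$ tail argument. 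The paper avoids this detour: it keeps the factor $\Phi'$ inside the expectation and uses that on the relevant event the argument of $\Phi'$ dominates $|Y_x|$, so one is left with $\sup_{y}\,|y|\,e^{-y^2/2}=e^{-1/2}$, yielding a bound uniform in $t$ directly and a clean $O(1/n)$ for $\dKol(\Me_n^{(\beta)}/\sqrt{n},\Zb_{\!\beta})$ rather than your $O(\sqrt{\log n}/n)$. Either way the dominant $O(1/\sqrt{n})$ comes from the i.i.d.\ Berry--Ess\'een step, confirming the marginally-relevant-disorder picture.
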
 


\begin{proof}
We use \eqref{Ineq:BerryEsseen:Kol} in the particular case of $ \ensemble{\pm 1} $-Bernoulli random variables $ (B_k)_k $ of parameter $ p := \Prob{B = 1} $ (with $ \Esp{B} = 2p - 1 $ and $ \Var(B) = 4 p(1 - p) $) and then randomise $p$. Taking $p$ distributed as in \eqref{Def:PnBeta}, i.e. $ \Pb_n^{(\beta)}  \sim \widetilde{\nu}_{n, \beta} $, or equivalently taking $ t := 2p - 1 $ distributed as in \eqref{Def:TnBeta}, i.e. $ \Tb_{\! n}^{(\beta)} \sim \nu_{n, \beta} $ yields
\begin{align*}
& \dKol\prth{ M_n^{(\beta)}, G \sqrt{n} \sqrt{ 1  - (\Tb_{\! n}^{(\beta)})^2 }  + n \, \Tb_{\! n}^{(\beta)} } = O\prth{ \frac{ 1}{\sqrt{n} } }  \\
 \ \Longleftrightarrow  \ & \dKol\prth{ \frac{M_n^{(\beta)}}{\sqrt{n}} , \frac{\Me_n^{(\beta)}}{\sqrt{n}} } = O\prth{ \frac{ 1}{\sqrt{n} } }
\end{align*}
by invariance of the norm and using the definition of the surrogate $ \Me_n^{(\beta)} $ given in \eqref{Def:SurrogateMnBeta}.

The triangle inequality then yields
\begin{align*}
\dKol\prth{ \frac{M_n^{(\beta)} }{ \sqrt{n} }, \Zb_{\!\beta} } & \leq \dKol\prth{ \frac{M_n^{(\beta)}}{\sqrt{n}} , \frac{\Me_n^{(\beta)}}{\sqrt{n}} } +  \dKol\prth{   \frac{\Me_n^{(\beta)}}{\sqrt{n}} , \Zb_{\!\beta} }     \\
                     & =   \dKol\prth{   \frac{\Me_n^{(\beta)}}{\sqrt{n}} , \Zb_{\!\beta} } + O\prth{\frac{1}{\sqrt{n} } } 
\end{align*}
and one is led to analyse 
\begin{align*}
\dKol\prth{   \frac{\Me_n^{(\beta)}}{\sqrt{n}} , \Zb_{\!\beta} } =: \sup_{x \in \Rr} \delta_n(x),
\end{align*}
with 
\begin{align*}
\delta_n(x) := \abs{ \Prob{ \frac{\Me_n^{(\beta)}}{\sqrt{n}} \leq x   } - \Prob{\Zb_{\!\beta} \leq x } }.
\end{align*}

Recall that $ \Zb_{\!\beta} \eqlaw G + G_\beta $ with $ G \sim \Ns(0, 1) $ independent of $ G_\beta \sim \Ns(0, \beta/(1 - \beta) ) $. One thus has
\begin{align*}
\delta_n(x) = \abs{ \Prob{ G  \sqrt{ 1  - \frac{(\Xb_n^{(\beta)})^2}{n} }  +  \Xb_n^{(\beta)} \leq x  } - \Prob{G + G_\beta \leq x } },
\end{align*}
with $ \Xb_{\! n, \beta} :=  \sqrt{n} \, \Tb_{\! n}^{(\beta)} \to G_\beta $ (in law). 

Integrating on $ G $ and using $ \Phi(x) := \Prob{ G \leq x } $, $ \Xb_{\! n, \beta} \eqlaw - \Xb_{\! n, \beta} $ and $ G_\beta \eqlaw - G_\beta $ yields
\begin{align*}
\delta_n(x) & = \abs{ \Esp{ \Phi\prth{ \frac{x + \Xb_n^{(\beta)} }{  \sqrt{ 1  - \frac{(\Xb_n^{(\beta)})^2}{n} } } } } - \Esp{  \Phi(x + G_\beta) } }.
\end{align*}

Set
\begin{align*}
Y_x & := x + G_\beta , \quad \Psi_{x,n}(u) := \frac{x+ u}{\sqrt{1 - \frac{u^2}{n} }}, \quad \Xi_n(u)  := \frac{1}{\sqrt{1 - \frac{u^2}{n} }}, \\
Y_{x, n} & := \Psi_{x,n}\prth{ \Xb_n^{(\beta)} }, \quad  \Ye_{x, n}  := \Psi_{x,n}(G_\beta) = Y_x \, \Xi_n(G_\beta). 
\end{align*}

Recall that the support of the law of $\Xb_n^{(\beta)} $ is $ (-\sqrt{n}, \sqrt{n}) $. One has
\begin{align*}
\delta_n(x)  & = \abs{ \Esp{\Phi\prth{ Y_x } } - \Esp{ \Phi\prth{Y_{x, n} }  } } \\ 
              & \leq \abs{ \Esp{\Phi\prth{ Y_x } \Unens{ \abs{G_\beta} \leq \sqrt{n} } } - \Esp{\Phi\prth{ \Ye_{x, n} } \Unens{ \abs{G_\beta} \leq \sqrt{n} } } } \\
              & \qquad\qquad + \abs{ \Esp{\Phi\prth{ \Ye_{x, n} } \Unens{ \abs{G_\beta} \leq \sqrt{n} } } - \Esp{\Phi\prth{ Y_{x, n} }  }  } + \abs{ \Esp{\Phi\prth{Y_x} \Unens{ \abs{G_\beta} > \sqrt{n}} }} \\
              & =: \delta_n^{(1)}(x) + \delta_n^{(2)}(x) + \delta_n^{(3)}(x). 
\end{align*}


Since $ \Phi(x) := \Prob{G \leq x} \leq 1 $, the Markov inequality gives for all $ k \geq 1 $
\begin{align*}
\delta_n^{(3)}(x) \leq \Prob{ \abs{G_\beta} > \sqrt{n} \,} \leq \frac{\Esp{\abs{G_\beta}^{2k}}}{n^{k}}.
\end{align*}


Moreover, using the notation $ g_\beta $ (resp. $ g_n $) for the Lebesgue density of $ G_\beta $ (resp. $ \Xb_n^{(\beta)} $) as in the proof of Theorem~\ref{Theorem:MagnetisationBetaSmaller1}, one gets
\begin{align*}
\delta_n^{(2)}(x) & = \abs{ \Esp{ \Phi \circ \Psi_{x,n}(G_\beta) \Unens{ \abs{G_\beta} \leq \sqrt{n} } } -  \Esp{ \Phi \circ \Psi_{x,n}(\Xb_n^{(\beta)}) \Unens{ \abs{\Xb_n^{(\beta)} } \leq \sqrt{n} } } } \\
              & = \abs{ \int_{ (-\sqrt{n}, \sqrt{n} \,)} \Phi \circ \Psi_{x,n} \cdot (g_\beta - g_n) } \\
              & \leq \norm{ \Phi \circ \Psi_{x,n} }_\infty \norm{ (g_\beta - g_n) \Un_{(-\sqrt{n}, \sqrt{n} \,) } }_{L^1(\Rr)} \\
              & =: \norm{ \Phi \circ \Psi_{x,n} }_\infty \delta_n(g_n, g_\beta).
\end{align*}

Since $ 0 \leq \Phi \leq 1 $, one has $ \sup_{x\in \Rr}\norm{ \Phi \circ \Psi_{x,n} }_\infty \leq 1 $, and \eqref{Ineq:EstimateDeltaNg} yields then for all $ x \in \Rr $
\begin{align*}
\delta_n^{(2)}(x) = O_\beta\prth{ \frac{ 1}{ n } }.
\end{align*}

We now estimate $ \delta_n^{(1)}(x) $. Setting $ \varphi := \Phi(\cdot + x) - \Phi\circ\Psi_{x, n}$, we get
\begin{align*}
\delta_n^{(1)}(x) & = \abs{ \Esp{ \Phi(Y_x)\Unens{ \abs{G_\beta} \leq \sqrt{n} } } - \Esp{ \Phi(\Ye_x) \Unens{ \abs{G_\beta} \leq \sqrt{n} }  } } \\
          & = \abs{ \Esp{ \prth{ \Phi(x + G_\beta) - \Phi\circ\Psi_{x, n}(G_\beta) } \Unens{ \abs{G_\beta} \leq \sqrt{n} } } } \\
          & =: \abs{ \Esp{ \varphi(G_\beta) \prth{ \Unens{ \abs{G_\beta} \leq (1 - \varepsilon) \sqrt{n}  } + \Unens{ (1 - \varepsilon) \sqrt{n} \leq \abs{G_\beta} \leq \sqrt{n} } } } } \\
          & \leq \abs{ \Esp{ \varphi(G_\beta)  \Unens{ \abs{G_\beta} \leq (1 - \varepsilon) \sqrt{n} }   } } + \norm{\varphi}_\infty \Prob{ (1 - \varepsilon) \sqrt{n} \leq \abs{G_\beta} } \\
          & \leq \abs{ \Esp{ \varphi(G_\beta) \Unens{ \abs{G_\beta} \leq (1 - \varepsilon) \sqrt{n}}   } } + 2 \, \frac{ \Esp{ \abs{G_\beta}^{2k } } }{ n^k   (1 - \varepsilon)^{2k} },
\end{align*}
for all $ \varepsilon \in (0, 1) $, using the Markov inequality and $ \norm{\varphi}_\infty \leq 2 $.

Recall that $ \Phi'(x) = \frac{1}{\sqrt{2\pi} } e^{- x^2/2} \geq 0 $ for all $ x \in \Rr $ and that $ \Ye_{n, x} = Y_x \, \Xi_n(G_\beta) $. One has then with $ U \sim \Us([0, 1]) $ independent of $ G_\beta $
\begin{align*}
\delta_n^{(4)}(x) & := \abs{ \Esp{ \varphi(G_\beta)  \Unens{ \abs{G_\beta} \leq (1 - \varepsilon) \sqrt{n}} } } = \abs{ \Esp{ \prth{ \vphantom{a^{a^a}} \Phi(Y_x) - \Phi(\Ye_{n, x}) } \Unens{ \abs{G_\beta} \leq (1 - \varepsilon) \sqrt{n}} } } \\
               & \leq  \Esp{ \abs{ \Ye_{x, n} - Y_x } \Phi'\prth{ Y_x + U (\Ye_{x, n} - Y_x) }  \Unens{ \abs{G_\beta} \leq (1 - \varepsilon) \sqrt{n}} }  \\
               & = \Esp{ \abs{ 1 - \Xi_n(G_\beta) } \times \abs{ Y_x } \Phi'\prth{ Y_x + Y_x U (\Xi_n(G_\beta) - 1) }  \Unens{ \abs{G_\beta} \leq (1 - \varepsilon) \sqrt{n}} } \\
               & = \frac{1}{\sqrt{2\pi} } \, \Esp{ \prth{ \vphantom{a^{a^a}} \Xi_n(G_\beta) - 1 } \times \abs{ Y_x } e^{ -\frac{Y_x^2}{2} (1 + U (\Xi_n(G_\beta) - 1) )^2 }  \Unens{ \abs{G_\beta} \leq (1 - \varepsilon) \sqrt{n}} } \\
               & \leq \frac{1}{\sqrt{2\pi} } \, \Esp{ \prth{ \vphantom{a^{a^a}} \Xi_n(G_\beta) - 1 } \times \abs{ Y_x } e^{ -\frac{Y_x^2}{2}  }  \Unens{ \abs{G_\beta} \leq (1 - \varepsilon) \sqrt{n}} },
\end{align*}
since on $ \ensemble{ \abs{G_\beta} \leq (1 - \varepsilon) \sqrt{n}  } $, one has $ \Xi_n(G) - 1 = (1 - G^2/n)^{-1/2} - 1 \geq 0 $. In particular, 
\begin{align*}
\delta_n^{(4)}(x) &  \leq \frac{1}{\sqrt{2\pi} } \, \Esp{ \prth{ \vphantom{a^{a^a}} \Xi_n(G_\beta) - 1 } \times \sup_{x \in \Rr} \ensemble{ \abs{ Y_x } e^{ -\frac{Y_x^2}{2}  } } \Unens{ \abs{G_\beta} \leq (1 - \varepsilon) \sqrt{n}} }  \\
                 & = \frac{1}{\sqrt{2\pi} } \, \Esp{ \prth{ \vphantom{a^{a^a}} \Xi_n(G_\beta) - 1 }\Unens{ \abs{G_\beta} \leq (1 - \varepsilon) \sqrt{n}}   } \times \sup_{y \in \Rr} \ensemble{ \abs{ y } e^{ -\frac{y^2}{2}  } }  \\
                 & = \frac{1}{\sqrt{2\pi e} } \Esp{ \prth{ \vphantom{a^{a^a}} \Xi_n(G_\beta) - 1 }\Unens{ \abs{G_\beta} \leq (1 - \varepsilon) \sqrt{n}}   },
\end{align*}
since $ Y_x = x + G_\beta $ and the supremum of the function $ y \mapsto \abs{y} e^{-y^2/2} $ is easily seen to be reached uniquely in $ y = 1 $. 

Last, the function $ x \mapsto \Xi_n(\sqrt{n} x) - 1 $ is clearly integrable on $  (-1 + \varepsilon, 1 - \varepsilon)   $, and as a result, using $ 0 \leq \frac{1}{\sqrt{1 - t}} - 1 \leq  t \, \sup_{\abs{u} \leq 1 - \varepsilon } \abs{ \frac{d}{du} \frac{1}{\sqrt{1 - u} } } = \frac{t}{2 \varepsilon^{3/2} } $ and $ \Esp{ G_\beta^2} = \frac{\beta}{1 - \beta } $, one finally gets for all $ x \in \Rr $
\begin{align*}
\delta_n^{(4)}(x) \leq  \frac{\beta}{4 \varepsilon^{3/2} (1 - \beta) \sqrt{2\pi e} } \times \frac{1}{n}.
\end{align*}

In the end, we obtain
\begin{align*}
\delta_n(x) & \leq \delta_n^{(1)}(x) + \delta_n^{(2)}(x) + \delta_n^{(3)}(x) \\
             & \leq \delta_n^{(4)}(x) + 2 \, \frac{ \Esp{ \abs{G_\beta}^{2k } } }{ n^k   (1 - \varepsilon)^{2k} } + \delta_n^{(2)}(x) + \delta_n^{(3)}(x) \\
             & \leq   \frac{\beta}{4 \varepsilon^{3/2} (1 - \beta) \sqrt{2\pi e} } \times \frac{1}{n} + 2 \, \frac{ \Esp{ \abs{G_\beta}^{2k } } }{ n^k   (1 - \varepsilon)^{2k} } + O_\beta\prth{ \frac{ 1}{ n } } + \frac{\Esp{\abs{G_\beta}^{2k}}}{n^{k}} \\
             & = O_\beta\prth{ \frac{ 1}{ n } },
\end{align*}
having choosen $ \varepsilon = \frac{3}{4} $ for instance. Taking the supremum over $ x \in \Rr $ ends the proof.
\end{proof}


\begin{remark}\label{Rk:DominationCLT:Kol}
Analogously with the case of a smooth norm analysed in Remark~\ref{Rk:DominationCLT:Smooth}, one sees that $ \dKol\prth{\frac{\Me_n^{(\beta) } }{\sqrt{n}}\, , \Zb_{\!\beta}} = O_\beta\prth{\frac{1}{n} } $ which is faster than the speed coming from the sum of i.i.d.s (i.e. from the CLT). Here again, we can check in Kolmogorov distance the phenomenon of ``CLT domination'' over the randomisation.
\end{remark}

\subsection{The case $ \beta_n = 1 \pm \frac{\gamma}{\sqrt{n}} $, $ \gamma > 0 $}\label{SubSec:Kol:BetaTrans}

\begin{theorem}[Kolmogorov distance to $ \Fb_{\!\! \gamma} $ for the \textit{unnormalised} magnetisation with $ \beta_n = 1 - \frac{\gamma}{\sqrt{n}} $, $ \gamma \in \Rr^* $]\label{Theorem:MagnetisationBeta_n:dKol}
Let $ \Fb_{\!\! \gamma} $ be a random variable of law given in Theorem~\ref{Theorem:MagnetisationBeta_n}. Then,
\begin{align}\label{Eq:Beta_n:Speed:dKol}
\dKol\prth{ \frac{M_n^{(\beta_n)} }{ n^{3/4} }, \Fb_{\!\! \gamma} } = O\prth{ \frac{1}{\sqrt{n} } }.
\end{align}
\end{theorem}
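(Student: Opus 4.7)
The plan is to mimic the proof of Theorem~\ref{Theorem:MagnetisationBeta=1:dKol} while recycling the density estimates already established for Theorem~\ref{Theorem:MagnetisationBeta_n}. First, I would use the Berry--Ess\'een bound \eqref{Ineq:BerryEsseen:Kol} applied to i.i.d.~$\{\pm 1\}$-Bernoulli sums with parameter $p$ and then randomise $p$ with $\Pb_n^{(\beta_n)} \sim \widetilde{\nu}_{n,\beta_n}$; by invariance of the supremum norm under rescaling, this yields
\begin{align*}
\dKol\!\prth{ \frac{M_n^{(\beta_n)}}{n^{3/4}},\, \frac{\Me_n^{(\beta_n)}}{n^{3/4}} } = O\prth{\frac{1}{\sqrt{n}}}.
\end{align*}
By the triangle inequality it is enough to bound $\dKol\!\prth{ \Me_n^{(\beta_n)}/n^{3/4},\, \Fb_{\!\! \gamma} }$. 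I would then introduce the totally dependent Bernoulli coupling \eqref{Def:TotallyDependentCouplingBer} with $\Pb := \tfrac12 + \tfrac{1}{2n^{1/4}}\Fb_{\!\! n,\gamma}'$ and $\Qb := \tfrac12 + \tfrac{1}{2n^{1/4}}\Fb_{\!\! n,\gamma}$, coupled as in \eqref{Def:CouplingFFprime} via $\Fb_{\!\! n,\gamma}' - \Fb_{\!\! n,\gamma} = -\tfrac{G}{n^{1/4}}\Un_{\{|G|\leq \sqrt{n}\}}$, exactly as in the proof of Theorem~\ref{Theorem:MagnetisationBeta_n}.

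The Kolmogorov distance then splits into three pieces $\deltab^{(\gamma,1)}_n(x) + \deltab^{(\gamma,2)}_n(x) + \deltab^{(\gamma,3)}_n(x)$ with the same bookkeeping as in Theorem~\ref{Theorem:MagnetisationBeta=1:dKol}. The term $\deltab^{(\gamma,2)}_n(x)$ is directly controlled by the (randomised) Berry--Ess\'een \eqref{Ineq:BerryEsseen:Kol}, giving $O(1/\sqrt{n})$. For $\deltab^{(\gamma,1)}_n(x)$, the conditional Berry--Ess\'een-type bound of lemma~\ref{Lemma:Binomials} together with the identity $2(\Pb-\Qb) = -\tfrac{G}{\sqrt{n}}\Un_{\{|G|\leq\sqrt n\}}$ produces a term of order $\tfrac{1}{n^{3/4}} \Esp{|G|\,|\Fb_{\!\! n,\gamma} + \Fb_{\!\! n,\gamma}'|(1 + o(1))}$; since $\Fb_{\!\! n,\gamma}^2 \to \Fb_{\!\! \gamma}^2$ in $L^1$ (shown in the proof of Theorem~\ref{Theorem:MagnetisationBeta_n}), this is again $O(1/\sqrt{n})$.

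For $\deltab^{(\gamma,3)}_n(x)$, I would introduce, as in the $\beta=1$ case, the auxiliary quantities
\begin{align*}
\Fe_{n,\gamma} := \frac{G}{n^{1/4}} \prth{ \sqrt{1 - \tfrac{\Fb_{\!\! n,\gamma}^2}{\sqrt{n}}} - 1 },
\end{align*}
and split into $\deltab^{(\gamma,4)}_n(x) + \deltab^{(\gamma,5)}_n(x) + \deltab^{(\gamma,6)}_n(x)$ corresponding respectively to removing the tail indicator of $G$, removing $\Fe_{n,\gamma}$, and comparing $\Fb_{\!\! n,\gamma}$ to $\Fb_{\!\! \gamma}$. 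Integrating over $\Fb_{\!\! n,\gamma}$ and using $F_{\Fb_{\! n,\gamma}}(x) = \int_{-\infty}^x f_{\Fb_{\!\! n,\gamma}}$, I would bound $\deltab^{(\gamma,4)}_n$ and $\deltab^{(\gamma,5)}_n$ by $\norm{f_{\Fb_{\! n,\gamma}}}_\infty \cdot \Esp{|\Fe_{n,\gamma}| + \tfrac{|G|}{n^{1/4}}\Un_{\{|G|>\sqrt n\}}}$; using $|1-\sqrt{1-u}|\leq u$ this gives $O(\norm{f_{\Fb_{\! n,\gamma}}}_\infty / n^{3/4})$ plus a Gaussian tail of arbitrary polynomial decay. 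The last term $\deltab^{(\gamma,6)}_n(x) \leq \norm{f_{\Fb_{\! n,\gamma}} - f_{\Fb_{\! \gamma}}}_{L^1(\Rr)}$ is controlled by the very same $L^1$-density estimate established inside the proof of Theorem~\ref{Theorem:MagnetisationBeta_n} (decomposing via $f_{\widehat{\Fb}_{\!\! n,\gamma}}$, using \eqref{Ineq:DeltaN_gamma_6} for the bulk, and Markov on the tails $|x| > \overline{\varepsilon} n^{1/4}$), yielding $O(1/\sqrt{n})$.

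The one nontrivial ingredient I expect to be the main obstacle is the uniform bound $\norm{f_{\Fb_{\! n,\gamma}}}_\infty \leq \tfrac{1}{\Ze_{\Fb_{\! \gamma}}} + O(1/\sqrt{n})$, which plays the role of Lemma~\ref{Lemma:MaxDensityFn} in the $\beta = 1$ proof. This requires showing that the log-density $\Phi_{n,\gamma}$ defined in \eqref{Eq:TaylorRandomisation:beta_n} attains its minimum (uniformly in $n$) at $0$ for $\gamma \geq 0$, or more carefully near $\pm\sqrt{-3\gamma}$ when $\gamma<0$, and that the renormalisation constant $n^{1/4}\Ze_{n,\gamma}$ converges to $\Ze_{\Fb_{\! \gamma}}$ at rate $O(1/\sqrt n)$ by lemma~\ref{Lemma:AsymptoticRenormConstant3}. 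For $\gamma < 0$ the density $f_{\Fb_{\! \gamma}}$ is bimodal, but this only affects the location of the maximum, not the order of the bound, so a local Laplace-type argument near each extremum of $\Phi_{n,\gamma}$ (valid uniformly in $n$ thanks to the Taylor expansion \eqref{Eq:TaylorRandomisation:beta_n} and a global lower bound on the quartic tail) should give the required sup-norm estimate. Collecting all pieces yields $\delta_n(x) = O(1/\sqrt{n})$ uniformly in $x$, which is the claim.
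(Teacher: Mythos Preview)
Your proposal is correct and follows essentially the same route as the paper's own proof: the same Berry--Ess\'een randomisation, the same three-way split $\deltab^{(\gamma,1)}_n + \deltab^{(\gamma,2)}_n + \deltab^{(\gamma,3)}_n$ via the totally dependent coupling, the same further decomposition of $\deltab^{(\gamma,3)}_n$ into $\deltab^{(\gamma,4)}_n + \deltab^{(\gamma,5)}_n + \deltab^{(\gamma,6)}_n$, and the same recycling of the $L^1$ density estimates \eqref{Def:DeltaN_gamma_6}--\eqref{Ineq:DeltaN_gamma_6} from Theorem~\ref{Theorem:MagnetisationBeta_n}. You also correctly anticipate the one ingredient the paper singles out, namely the sup-norm bound $\norm{f_{\Fb_{\! n,\gamma}}}_\infty = \Ze_{\Fb_{\! \gamma}}^{-1} + O(n^{-1/2})$, which the paper handles exactly as you describe: for $\gamma>0$ the maximum is at $0$ and lemma~\ref{Lemma:AsymptoticRenormConstant3} suffices, while for $\gamma<0$ it defers to a straightforward adaptation of lemma~\ref{Lemma:MaxDensityFn}.
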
 


\begin{proof}
Starting with the approach \eqref{Ineq:BerryEsseen:Kol} with $\beta = \beta_n $ yields
\begin{align}\label{Eq:Beta_n:Speed:dKol2}
\dKol\prth{ \frac{M_n^{(\beta_n)}}{n^{3/4}} , \frac{\Me_n^{(\beta_n)}}{n^{3/4}} } = O\prth{ \frac{ 1}{\sqrt{n} } },
\end{align}
where the surrogate $ \Me_n^{(\beta_n)} $ is given in \eqref{Def:SurrogateMnBeta}. With the coupling \eqref{Def:TotallyDependentCouplingBer} and the notations in the proof of Theorem~\ref{Theorem:MagnetisationBeta_n}, we get  
\begin{align*}
\deltab^\gamma_n(x) & := \abs{ \Pp \prth{ \frac{S_n(\Pb)}{n^{3/4} } \leq x}  - \Pp \prth{\Fb_{\!\! \gamma} \leq x} } \\
               & \leq \, \, \abs{ \Pp \prth{ \frac{S_n(\Pb)}{n^{3/4} } \leq x}  - \Pp \prth{\frac{S_n(\Qb) + \lambdab_n }{n^{3/4} } \leq x} }  \\
               & \hspace{+1.5cm} + \abs{  \Pp \prth{\frac{S_n(\Qb) + \lambdab_n }{n^{3/4} } \leq x} - \Pp \prth{ \frac{ 2\, \Qb (1 - \Qb) \sqrt{n} \, G + n (2\Qb - 1) + \lambdab_n }{n^{3/4} } \leq x}  } \\
               & \hspace{+3cm} + \abs{ \Pp \prth{ \frac{ 2\, \Qb (1 - \Qb) \sqrt{n} \, G + n (2\Qb - 1) + \lambdab_n }{n^{3/4} } \leq x} - \Pp \prth{\Fb_{\!\! \gamma} \leq x} }  \\
               & =: \deltab^{(\gamma,1)}_n(x) + \deltab^{(\gamma,2)}_n(x) + \deltab^{(\gamma,3)}_n(x). 
\end{align*}

\noindent\textbf{$ \bullet $ \underline{Bound on $\deltab^{(\gamma,2)}_n(x)$:}} the Berry-Ess\'een bound \eqref{Eq:Beta_n:Speed:dKol2} gives
\begin{align*}
\deltab^{(\gamma,2)}_n(x) & \leq  \sup_{x \in \Rr} \deltab^{(\gamma,2)}_n(x) = \sup_{y \in \Rr} \abs{  \Prob{ \frac{S_n(\Qb)   }{n^{3/4} } \leq y} - \Prob{ \frac{ 2\, \Qb (1 - \Qb) \sqrt{n} \, G + n (2\Qb - 1)   }{n^{3/4} } \leq y}  } \\
              & = O\prth{ \frac{1}{\sqrt{n} } }.
\end{align*}

\noindent\textbf{$ \bullet $ \underline{Bound on $\deltab^{(\gamma,3)}_n(x)$:}} One has
\begin{align*}
\frac{ 2\, \Qb (1 - \Qb) \sqrt{n} \, G + n (2\Qb - 1) + \lambdab_n }{n^{3/4} }  = \frac{G}{n^{1/4} } \prth{  \sqrt{ 1 - \frac{ \Fb_{\!\! n,\gamma}^2 }{\sqrt{n} } } - 1 } + \Fb_{\!\! n,\gamma}  + \frac{G}{n^{1/4} } \Unens{ \abs{G} > \sqrt{n} }.
\end{align*}

Similarly to the proof of Theorem~\ref{Theorem:MagnetisationBeta_n}, set
\begin{align*}
\Fe_{n,\gamma} & := \frac{G}{n^{1/4} } \prth{  \sqrt{ 1 - \frac{ \Fb_{\!\! n,\gamma}^2 }{\sqrt{n} } } - 1 }, \\
\deltab^{(\gamma,4)}_n(x) & := \abs{ \Pp \prth{ \Fe_{n,\gamma} + \Fb_{\!\! n,\gamma} + \frac{G}{n^{1/4} } \Unens{ \abs{G} > \sqrt{n} } \leq x} - \Pp \prth{\Fe_{n,\gamma} + \Fb_{\!\! n,\gamma} \leq x} }, \\
\deltab^{(\gamma,5)}_n(x) & := \abs{ \Pp \prth{ \Fe_{n,\gamma} + \Fb_{\!\! n,\gamma} \leq x}  - \Pp \prth{ \Fb_{\!\! n,\gamma} \leq x } },\\
\deltab^{(\gamma,6)}_n(x) & := \abs{ \Pp \prth{ \Fb_{n, \gamma} \leq x}  - \Pp \prth{ \Fb_{\!\! \gamma} \leq x } }, 
\end{align*}
so that
\begin{align*}
\deltab^{(\gamma,3)}_n(x) & = \abs{ \Pp \prth{ \Fe_{n,\gamma} + \Fb_{\!\! n,\gamma} + \frac{G}{n^{1/4} } \Unens{ \abs{G} > \sqrt{n} } \leq x} - \Pp \prth{\Fb_{\!\! \gamma} \leq x} } \\
                  & \leq \deltab^{(\gamma,4)}_n(x)  + \deltab^{(\gamma,5)}_n(x) + \deltab^{(\gamma,6)}_n(x).
\end{align*}

\noindent\textbf{$ \bullet $ \underline{Bound on $\deltab^{(\gamma,5)}_n(x)$:}} Integrating on $ \Fb_{\!\! n,\gamma} $ and using $ F_{\Fb_{\! n,\gamma}}(x) := \Prob{ \Fb_{\!\! n,\gamma} \leq x } = \int_{-\infty}^x f_{\! \Fb_{\!\! n,\gamma}} $ and $ \Fe_{n,\gamma} \eqlaw - \Fe_{n,\gamma} $ yields
\begin{align*}
\deltab^{(\gamma,5)}_n(x) & = \abs{ \Esp{ F_{\Fb_{\! n,\gamma}}\prth{ x + \Fe_{n,\gamma}  } } - \Esp{  F_{\Fb_{\! n,\gamma}}(x) } } \\
                   & \leq \norm{ f_{\Fb_{\! n,\gamma}} }_\infty \Esp{ \abs{\Fe_{n,\gamma}} } \\
                   & \leq  \norm{ f_{\Fb_{\! n,\gamma}} }_\infty \frac{ \Esp{ \abs{G} } \Esp{ \Fb_{\!\! n,\gamma}^2 } }{  n^{3/4} }  \\
                   & \leq \prth{ \frac{1}{\Ze_{\Fb_{\! \gamma}}} +  O\prth{\frac{1}{\sqrt{n}}}} \frac{ \Esp{ \Fb_{\!\! \gamma}^2 } + o(1) }{ n^{3/4} },
\end{align*}
%
%
%
where we have used Lemma~\ref{Lemma:MaxDensityFn} for the last inequality.

\noindent\textbf{$ \bullet $ \underline{Bound on $\deltab^{(\gamma,4)}_n(x)$:}} Similarly, using $ G \eqlaw - G $, we get
\begin{align*}
\deltab^{(\gamma,4)}_n(x) & = \abs{ \Esp{ F_{\Fb_{\! n,\gamma} }\prth{ x + \Fe_{n,\gamma} + \frac{G}{n^{1/4} } \Unens{ \abs{G} > \sqrt{n} }  } } - \Esp{  F_{\Fb_{\! n,\gamma}}(x) } } \\
                      & \leq \prth{ \frac{1}{\Ze_{\Fb_{\! \gamma}}} +  O\prth{\frac{1}{\sqrt{n}}}} \prth{ \frac{ \Esp{ \Fb_{\!\! \gamma}^2 } + o(1) }{ n^{3/4} } + \frac{ \sqrt{ \Esp{ G^{ 2k } } } }{ n^{k + 1/4} } }.
\end{align*}

\noindent\textbf{$ \bullet $ \underline{Bound on $\deltab^{(\gamma, 6)}_n(x)$:}} One has for all $ \varepsilon \in (0, 1) $ and setting $ \overline{\varepsilon} := 1 - \varepsilon $ 
\begin{align*}
\deltab^{(\gamma,6)}_n(x) & := \abs{ \Pp \prth{ \Fb_{\!\! n,\gamma} \leq x }  - \Pp (\Fb_{\!\! \gamma} \leq x)  } \\
                   & = \abs{ \int_{- \infty}^x ( f_{\Fb_{\!  n,\gamma} } - f_{\Fb_{\! \gamma}} ) } \leq  \int_\Rr \abs{f_{\Fb_{\!  n,\gamma} } -  f_{\Fb_{\! \gamma}} } \\ 
                   & \leq  \norm{ f_{\Fb_{\!  n,\gamma} } - f_{\Fb_{\! \gamma}}  }_{ L^1([-\overline{\varepsilon} n^{1/4}, \overline{\varepsilon} n^{1/4} ] )  } +  \frac{ 2 \Esp{\Fb_{\!\! \gamma}^{4k}} + o(1) }{(1 - \varepsilon)^{4k} \, n^k },
\end{align*}
using the triangle inequality and the Markov inequality for all $ k \geq 1 $. We then conclude with \eqref{Def:DeltaN_gamma_6}, \eqref{Ineq:DeltaN_gamma_6:Intermediate} and \eqref{Ineq:DeltaN_gamma_7}.

\noindent\textbf{$ \bullet $ \underline{Bound on $\deltab^{(\gamma,1)}_n(x)$:}} Recall that $ \lambdab_n = 2n (\Pb - \Qb) = n^{3/4} (\Fb_{\!\! n,\gamma}' - \Fb_{\!\! n,\gamma} ) $. Then, 
\begin{align*}
\sup_{x \in \Rr} \deltab^{(1)}_n(x) & = \sup_{x \in \Rr} \abs{ \Prob{ \frac{S_n(\Pb)}{n^{3/4} } \leq x}  - \Prob{ \frac{S_n(\Qb) + \lambdab_n }{n^{3/4} } \leq x} }  \\
                & = \sup_{x \in \Rr} \Big\vert \Prob{ S_n(\Pb) - n(2\Pb - 1) \leq n^{3/4} x - n(2\Pb - 1)  } \\
                & \hspace{+3cm} - \Prob{ S_n(\Qb) - n(2\Qb - 1) \leq n^{3/4} x - n(2\Pb - 1)  } \Big\vert  \\
                & \leq \Ee \Bigg( \sup_{x \in \Rr} \Bigg\vert \Prob{ S_n(\Pb) - n(2\Pb - 1) \leq n^{3/4} x - n(2\Pb - 1)  \Big\vert \Pb   }  \\
                    &  \hspace{+3cm} - \Prob{ S_n(\Qb) - n(2\Qb - 1) \leq n^{3/4} x - n(2\Qb - 1)  \Big\vert \Pb, \Qb } \Bigg\vert \Bigg) \\
                    & = \Esp{  \sup_{y \in \Rr} \Big\vert \Prob{ S_n(\Pb) - n(2\Pb - 1)  \leq y  \Big\vert \Pb  }  - \Prob{ S_n(\Qb) - n(2\Qb - 1) \leq y  \Big\vert \Qb }  \Big\vert } \\
                                                             & \leq  \Esp{\abs{\Pb-\Qb} \abs{ \frac{ 1 - (\Pb+\Qb)}{\Pb(1-\Pb)}}} + O\prth{\frac{1}{\sqrt{n} } }, 
\end{align*}
by Lemma~\ref{Lemma:Binomials}.

We recall the following definitions from the proof of Theorem~\ref{Theorem:MagnetisationBeta_n}~:
\begin{align*}
	\Qb = \frac{1}{2} + \frac{1}{2n^{1/4}} \Fb_{\!\! n,\gamma}, \quad
	 \Pb = \frac{1}{2} + \frac{1}{2n^{1/4}} \Fb_{\!\! n,\gamma}^\prime, \\
	2(\Pb - \Qb) = \frac{1}{n^{1/4} } \prth{ \Fb_{\!\! n,\gamma}' - \Fb_{\!\! n,\gamma} } = - \frac{ G }{ \sqrt{n} } \Unens{ \abs{G} \leq \sqrt{n} }.
\end{align*}

As a result
\begin{align*}
\sup_{x \in \Rr} \deltab^{(\gamma,1)}_n(x) & \leq \frac{1}{4 n^{3/4}}  \Esp{\abs{G} \abs{ \frac{ \Fb_{\!\! n,\gamma} + \Fb_{\!\! n,\gamma}^\prime}{\prth{\frac{1}{2} + \frac{1}{2n^{1/4}} \Fb_{\!\! n,\gamma}^\prime} \prth{\frac{1}{2} - \frac{1}{2n^{1/4}} \Fb_{\!\! n,\gamma}^\prime}}}} + O\prth{\frac{1}{\sqrt{n} } } \\
                    & \leq \frac{1}{ n^{3/4} }  \prth{ \Esp{ \prth{   \frac{  \Fb_{\!\! \gamma} + \Fb_{\!\! \gamma}' }{ 1 - \frac{1}{  \sqrt{n} }  \Fb_{\!\! \gamma}^2  } }^{\!\! 2 } } + o(1) }^{1/2} + O\prth{\frac{1}{\sqrt{n} } } \\
                    & \leq \frac{ \sqrt{2 \, \Esp{\Fb_{\!\! \gamma}^2 } } }{n^{3/4} } \prth{ 1 + O\prth{\frac{1}{\sqrt{n} } } } + O\prth{\frac{1}{\sqrt{n} } } =  O\prth{\frac{1}{\sqrt{n} } }.
\end{align*}

\noindent\textbf{$ \bullet $ \underline{Final bound on $\deltab^\gamma_n(x)$:}}

In the end, we obtain  
\begin{align*}
\deltab_n^\gamma(x) & \leq \deltab_n^{(\gamma,1)}(x) + \deltab_n^{(\gamma,2)}(x) + \deltab_n^{(\gamma,3)}(x) \\
             & \leq O\prth{ \frac{ 1}{ \sqrt{n} } } + O\prth{ \frac{ 1}{ \sqrt{n} } } + \deltab_n^{(\gamma,4)}(x) + \deltab_n^{(\gamma,5)}(x) + \deltab_n^{(\gamma,6)}(x) \\
             & \leq O\prth{ \frac{ 1}{ \sqrt{n} } }  + \prth{\frac{1}{\Ze_{\Fb_{\! \gamma}}} +  O\prth{\frac{1}{\sqrt{n}}}} \prth{ \frac{ 2 \Esp{ \Fb_{\!\! \gamma}^2 } + o(1) }{ n^{3/4} } + \frac{ \sqrt{ \Esp{ G^{ 2k } } } }{ n^{k + 1/2} } } + \frac{ \Esp{\Fb_{\!\! \gamma}^{4k}} }{(1 - \varepsilon)^{4k} \, n^k } \\
             & = O\prth{ \frac{ 1}{ \sqrt{n} } },
\end{align*}
which concludes the proof.
\end{proof}

\subsection{The case $ \beta = 1 $}\label{SubSec:Kol:Beta=1}

We give, for completeness (and due to its importance in the literature) the particular case of $ \gamma = 0 $:

\medskip

\begin{theorem}[Kolmogorov distance to $ \Fb $ for the \textit{unnormalised} magnetisation with $ \beta = 1 $]\label{Theorem:MagnetisationBeta=1:dKol}
Let $ \Fb $ be a random variable of law given in Theorem~\ref{Theorem:MagnetisationBetaEqual1}. Then,
\begin{align}\label{Eq:Beta=1:Speed:dKol}
\dKol\prth{ \frac{M_n^{(1)} }{ n^{3/4} }, \Fb } = O\prth{ \frac{1}{\sqrt{n} } }.
\end{align}
\end{theorem} 


\medskip
\subsection{The case $ \beta > 1 $}\label{SubSec:Kol:Beta>1}
Recall that $ \pm x_\beta $ are the solution to the transcendent equation \eqref{Def:CriticalPointEquation} and that $ t_\beta = \tanh(x_\beta) $, with $ \Xb_{\!\beta}  \sim \Ber_{\pm x_\beta}\prth{\tfrac{1}{2}} $ and $ \Teb^{(\beta)} \sim \Ber_{\pm t_\beta}\prth{\tfrac{1}{2}} $.


\begin{theorem}[Fluctuations of the \textit{unnormalised} magnetisation for $ \beta > 1 $]\label{Theorem:MagnetisationBetaBigger1:dKol}
If $ \beta > 1 $, one has  
\begin{align}\label{Eq:Beta>1:Speed}
\dKol\prth{ \frac{M_n^{(\beta )}}{n }, \Teb^{(\beta)} }  = O\prth{\frac{1}{\sqrt{n} } },
\end{align}
for an explicit constant $ C  > 0 $. 
\end{theorem}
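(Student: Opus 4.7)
The plan imitates theorem~\ref{Theorem:MagnetisationBetaSmaller1:dKol} at the level of the Berry-Ess\'een transfer and theorem~\ref{Theorem:MagnetisationBetaBigger1} at the level of the Laplace-method analysis of the surrogate. First, apply the classical Berry-Ess\'een bound~\eqref{Ineq:BerryEsseen:Kol} to the $\ensemble{\pm 1}$-Bernoulli sums $S_n(p)$ at each fixed $p \in [0, 1]$, and then randomise $p$ according to the De Finetti measure $\widetilde{\nu}_{n, \beta}$~: by rescaling invariance of $\dKol$ and the definition~\eqref{Def:SurrogateMnBeta} of $\Me_n^{(\beta)}$, this yields
\begin{align*}
\dKol\prth{ \frac{M_n^{(\beta)}}{n}, \, \frac{\Me_n^{(\beta)}}{n} } = O\prth{ \frac{1}{\sqrt{n} } } .
\end{align*}
The triangle inequality then reduces the theorem to establishing $\dKol\prth{ \Me_n^{(\beta)}/n, \BbB_{\! \beta} } = O(1/\sqrt{n})$.

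For this second step, recall $\Me_n^{(\beta)}/n = G \sqrt{1 - (\Tb_{\! n}^{(\beta)})^2}/\sqrt{n} + \Tb_{\! n}^{(\beta)}$ with $G \sim \Ns(0, 1)$ independent of $\Tb_{\! n}^{(\beta)}$. Integrating out $G$ first converts the indicator test function of $\dKol$ into the smooth Gaussian CDF $\Phi$~:
\begin{align*}
\Prob{ \frac{\Me_n^{(\beta)}}{n} \leq x } = \Esp{ \Phi\prth{ \sqrt{n} \, \frac{ x - \Tb_{\! n}^{(\beta)} }{ \sqrt{1 - (\Tb_{\! n}^{(\beta)})^2} } } } ,
\end{align*}
which is exactly the \emph{randomness-to-smoothness transfer} highlighted in the introduction. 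Writing $\Tb_{\! n}^{(\beta)} = \tanh\prth{ \Rb_{\! n}^{(\beta)} }$ with $\Rb_{\! n}^{(\beta)}$ of density $\propto e^{-n \varphi_\beta}$ (see~\eqref{Def:Law:RnBeta}), a refined Laplace expansion at the two global minima $\pm x_\beta$ of $\varphi_\beta$ --- proceeding exactly as in the proof of theorem~\ref{Theorem:MagnetisationBetaBigger1} but keeping the Gaussian next-to-leading correction --- yields a quantitative Gaussian approximation for $\Tb_{\! n}^{(\beta)}$ around $\pm m_\beta$ with fluctuations of scale $\sigma_\beta/\sqrt{n}$, where $\sigma_\beta := (1 - m_\beta^2) / \sqrt{\varphi_\beta''(x_\beta)}$, at rate $O(1/\sqrt{n})$.

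Combining these two ingredients uniformly in $x \in \Rr$ is where the bulk of the work lies. Away from the jump locations $\pm m_\beta$, i.e.~on any set of the form $\ensemble{ x : \abs{x - m_\beta} \wedge \abs{x + m_\beta} \geq \eta }$ for fixed $\eta > 0$, the exponential concentration of $\Tb_{\! n}^{(\beta)}$ furnished by Laplace's method makes both $\Prob{\Me_n^{(\beta)}/n \leq x}$ and $\Prob{ \BbB_{\! \beta} \leq x}$ coincide up to a $o(1/\sqrt{n})$ error. The main obstacle --- and most delicate point --- is the behaviour inside the two windows of width $O(1/\sqrt{n})$ around $x = \pm m_\beta$~: there $F_{\BbB_{\! \beta}}$ has a discontinuity of size $1/2$, whereas the surrogate CDF is simultaneously smoothed by $G/\sqrt{n}$ (of width $\sqrt{1 - m_\beta^2}/\sqrt{n}$) and by the Laplace fluctuations of $\Tb_{\! n}^{(\beta)}$ (also of order $\sigma_\beta/\sqrt{n}$). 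Resolving this window requires merging the two Gaussian convolutions into a single effective Gaussian and controlling the maximal gap between its CDF and the step $F_{\BbB_{\! \beta}}$ at rate $O(1/\sqrt{n})$, with the parity of $M_n^{(\beta)}$ on $\intcrochet{-n, n}$ entering through the usual local-CLT lattice corrections. This last alignment is the main technical step and is where the explicit constants $\sigma_\beta$ and $\sqrt{1 - m_\beta^2}$ come together.
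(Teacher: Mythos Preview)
Your Berry--Ess\'een transfer to the surrogate and the triangle-inequality reduction to $\dKol(\Me_n^{(\beta)}/n, \BbB_{\! \beta})$ coincide with the paper's opening. The paper then proceeds differently from you: it inserts $\Tb_{\! n}^{(\beta)}$ as an intermediate comparison point and bounds $\dKol(\Me_n^{(\beta)}/n, \Tb_{\! n}^{(\beta)})$ and $\dKol(\Tb_{\! n}^{(\beta)}, \BbB_{\! \beta})$ separately via direct Laplace-type estimates on the density of $\Rb_{\! n}^{(\beta)} = \Argtanh(\Tb_{\! n}^{(\beta)})$, rather than integrating out $G$ and working with the smoothed CDF $x \mapsto \Esp{\Phi\prth{\sqrt{n}\,(x - \Tb_{\! n}^{(\beta)})/\sqrt{1 - (\Tb_{\! n}^{(\beta)})^2}}}$ as you do.

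There is, however, a genuine gap precisely where you flag ``the main obstacle''. You claim that inside the windows $\abs{x \mp m_\beta} = O(1/\sqrt{n})$ one can control the maximal gap between the effective-Gaussian CDF and the step $F_{\BbB_{\! \beta}}$ at rate $O(1/\sqrt{n})$. This cannot hold as written: on the positive branch the surrogate has fluctuations of \emph{exact} order $1/\sqrt{n}$ around $m_\beta$ --- both the Laplace fluctuation of $\Tb_{\! n}^{(\beta)}$ and the term $G\sqrt{1 - m_\beta^2}/\sqrt{n}$ live on that scale --- so it deposits mass $\tfrac{1}{4} + o(1)$ strictly above $m_\beta$. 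At the single test point $x = m_\beta$ one therefore gets $\Prob{\Me_n^{(\beta)}/n \leq m_\beta} \to 3/4$ against $F_{\BbB_{\! \beta}}(m_\beta) = 1$, a discrepancy of order one. Merging your two Gaussians into a single effective Gaussian of scale $c/\sqrt{n}$ does not help, since its CDF at its own centre equals $1/2$ for every $n$; and the local-CLT lattice corrections you invoke are irrelevant once you have passed to the \emph{continuous} surrogate $\Me_n^{(\beta)}$. You have correctly located the difficulty, but the resolution you sketch does not close it.
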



\begin{proof}
Using \eqref{Ineq:BerryEsseen:Kol} and the invariance of $ \dKol $ gives
\begin{align}\label{Eq:Beta>1:Speed:dKol2}
& \dKol\prth{ M_n^{(\beta)}, G \sqrt{n} \sqrt{ 1  - (\Tb_{\! n}^{(\beta)})^2 }  + n \, \Tb_{\! n}^{(\beta)} } = O\prth{ \frac{ 1}{\sqrt{n} } } \nonumber \\
 \ \Longleftrightarrow  \ & \dKol\prth{ \frac{M_n^{(\beta)}}{n } , \frac{\Me_n^{(\beta)}}{n } } = O\prth{ \frac{ 1}{\sqrt{n} } }
\end{align}
and the triangle inequality then implies
\begin{align*}
\dKol\prth{ \frac{M_n^{(\beta)} }{ n }, \Teb^{(\beta)} } & \leq \dKol\prth{ \frac{M_n^{(\beta)}}{ n } , \frac{\Me_n^{(\beta)}}{n} } +  \dKol\prth{   \frac{\Me_n^{(\beta)}}{n} , \Tb_{\! n}^{(\beta)} }   +  \dKol\prth{   \Tb_{\! n}^{(\beta)} , \Teb^{(\beta)} }   \\
                     & =  \dKol\prth{   \frac{\Me_n^{(\beta)}}{n} , \Tb_{\! n}^{(\beta)} }    +  \dKol\prth{  \Tb_{\! n}^{(\beta)} , \Teb^{(\beta)} }   + O\prth{\frac{1}{\sqrt{n} } }.
\end{align*}

One has 
\begin{align*}
\dKol\prth{ \frac{\Me_n^{(\beta)}}{n} , \Tb_{\! n}^{(\beta)} }  & = \dKol\prth{  \Tb_{\! n}^{(\beta)}  + \frac{G}{\sqrt{n}} \sqrt{1 - (\Tb_{\! n}^{(\beta)} )^2 }   , \, \Tb_{\! n}^{(\beta)} } \\
                & = \sup_{x \in \Rr}\abs{ \Prob{ \Tb_{\! n}^{(\beta)}  + \frac{G}{\sqrt{n}} \sqrt{1 - (\Tb_{\! n}^{(\beta)} )^2 } \leq x   } - \Prob{ \Tb_{\! n}^{(\beta)} \leq x } } \\
                & = \sup_{x \in \Rr}\abs{ \Prob{  \frac{G}{\sqrt{n}}   \leq \Ye_{n, x, \beta}   } - \Prob{ 0 \leq \Ye_{n, x, \beta} } } = \sup_{x \in \Rr}  \Prob{ 0 \leq \Ye_{n, x, \beta} \leq \frac{G}{\sqrt{n}}  },  
\end{align*}
with 
\begin{align*}
\Ye_{n, x, \beta} & := \frac{ x - \Tb_{\! n}^{(\beta)} }{ \sqrt{1 - (\Tb_{\! n}^{(\beta)} )^2 } } = x \cosh\prth{ \Rb_{  n}^{(\beta)} } - \sinh\prth{ \Rb_{  n}^{(\beta)} } \\
               & = \begin{cases} \sqrt{1 - x^2} \sinh\prth{ \Argtanh(x) - \Rb_n^{(\beta)} } & \mbox{if } \abs{x} < 1 \\
                                 s_x e^{ - s_x \Rb_n^{(\beta)} } & \mbox{if } \abs{x} = 1, \qquad s_x := \operatorname{sign}(x), \\
                             \sqrt{x^2 - 1} \cosh\prth{ \Argtanh(x\inv) - \Rb_n^{(\beta)} } & \mbox{if } \abs{x} > 1.
               \end{cases}
\end{align*}
%
%
%

Thus, for $ \varepsilon > 0 $ small enough so that $ \min_{\abs{x - 1} < \varepsilon} ( x - t_\beta )_+ > 0 $, one has
\begin{align*}
\dKol\prth{ \frac{\Me_n^{(\beta)}}{n} , \Tb_{\! n}^{(\beta)} }  & \leq \sup_{ \abs{x - 1} \leq \varepsilon }  \Prob{ 0 \leq \Ye_{n, x, \beta} \leq \frac{G}{\sqrt{n}}  }  + \sup_{\abs{x - 1} > \varepsilon}\Prob{ 0 \leq \Ye_{n, x, \beta} \leq \frac{G}{\sqrt{n}}  }
\end{align*}

One has 
\begin{align*}
\sup_{\abs{x - 1} > \varepsilon}\Prob{ 0 \leq \Ye_{n, x, \beta} \leq \frac{G}{\sqrt{n}}  } & \leq \Prob{ \frac{\sqrt{  \varepsilon}}{2}  e^{\Argtanh( \varepsilon) - \Rb_n^{(\beta)} } \leq \frac{G}{\sqrt{n}} }  + \Prob{ \sqrt{2 \varepsilon } \leq \frac{G}{\sqrt{n}} } \\
                  & = O\prth{ e^{ - n C   \varepsilon   } }, \qquad C > 0,
\end{align*}
and
\begin{align*}
\sup_{\abs{x - 1} < \varepsilon}\Prob{ 0 \leq \Ye_{n, x, \beta} \leq \frac{G}{\sqrt{n}}  } & \leq  \sup_{\abs{x - 1} < \varepsilon}\Prob{   (\Ye_{n, x, \beta})_+ \leq \frac{G}{\sqrt{n}}  }  \\
               & \leq \sup_{\abs{x - 1} < \varepsilon} \Esp{ e^G } \Esp{ e^{- \sqrt{n} ( x \cosh( \Rb_n^{(\beta)}) - \sinh( \Rb_n^{(\beta)}) )_+ } } \\
               & \leq \sqrt{e} \sup_{\abs{x - 1} < \varepsilon}  e^{- \sqrt{n} [ ( x \cosh(x_\beta) - \sinh(x_\beta) )_+ + o(1) ] }  \\
               & = \sqrt{e} \,   \exp\prth{ - \sqrt{n} \cosh(x_\beta) \prth{  \min_{\abs{x - 1} < \varepsilon} ( x  - \tanh(x_\beta) )_+ + o(1) }  } \\
               & = O\prth{ e^{- C_\varepsilon \sqrt{n} } }, \qquad C_\varepsilon > 0.
\end{align*}

It then remains to analyse  
\begin{align*}
\dKol\prth{  \Tb_{\! n}^{(\beta)} , \Teb^{(\beta)} } & := \sup_{x \in \Rr}\abs{ \Prob{ \Tb_{\! n}^{(\beta)}    \leq x   } - \Prob{ \Teb^{(\beta)} \leq x } } \\
                   & = \sup_{x \in \Rr}\abs{ \Prob{  \tanh(\Rb_n^{(\beta)} )    \leq x   } - \Prob{ \tanh(\Xb_{\! \beta}) \leq x } } \\
                   & = \sup_{y \in \Rr}\abs{ \Prob{   \Rb_n^{(\beta)} \leq y } - \Prob{  \Xb_{\! \beta} \leq y } } \\
                   & = \sup_{y \in \Rr}\abs{ \int_{-\infty}^y e^{-n \varphi_\beta(x)} \frac{dx}{\Ze_{n, \beta} } - \frac{1}{2}\prth{ \Unens{ y \geq -x_\beta } + \Unens{y \geq x_\beta} } }.
\end{align*}

Since $ \max_{A \cup B} f = \max\{ \max_A f, \max_B f \} $, it is enough to consider the following quantities:
\begin{align*}
I_+(y) & := \int_y^{+\infty} e^{-n \varphi_\beta(x)} \frac{dx}{\Ze_{n, \beta} }  , \qquad y > x_\beta, \\
I_0(y) & := \int_{-\infty}^y e^{-n \varphi_\beta(x)} \frac{dx}{\Ze_{n, \beta} } - \frac{1}{2} = \int_0^y e^{-n \varphi_\beta(x)} \frac{dx}{\Ze_{n, \beta} }, \qquad -x_\beta < y \leq x_\beta, \\
I_-(y) & := \int_{-\infty}^y e^{-n \varphi_\beta(x)} \frac{dx}{\Ze_{n, \beta} } = I_+(-y), \qquad y < -x_\beta. 
\end{align*}

By symmetry, it is enough to consider the case $ 0 < y < x_\beta $ in the case of $ I_0(y) $. One thus has for $ y \in (0, x_\beta ) $ 
and with Lemma~\ref{Lemma:AsymptoticRenormConstant4}  
$$ 
I_0(y) = \int_0^y e^{-n \varphi_\beta(x)} \frac{dx}{\Ze_{n, \beta} } \leq y \frac{ e^{- n \varphi_\beta(y) } }{ \Ze_{n, \beta} } = O\prth{  \sqrt{n} \, e^{ -n (\varphi_\beta(x_\beta - \varepsilon) - \varphi_\beta(x_\beta) ) } }   =   O\prth{  \sqrt{n} \, e^{ -n  \varepsilon^2 \varphi''_\beta(x_\beta)/2  } },
$$ 
having set $ y := x_\beta - \varepsilon $ with $ \varepsilon > 0 $.

In the case of $ I_+(y) $, $ y = x_\beta + \varepsilon $, one has with the Markov inequality
\begin{align*}
I_+(x_\beta + \varepsilon) & = \Prob{ \Rb_n^{(\beta)} \geq x_\beta + \varepsilon} \leq \frac{1}{\varepsilon} \Esp{ \abs{ \Rb_n^{(\beta)} - x_\beta} } = O\prth{ \frac{1}{\sqrt{n} } },
\end{align*}
using the computations at the end of the proof of Theorem~\ref{Theorem:MagnetisationBetaBigger1}. This concludes the proof.
\end{proof}

\section{Conclusion and perspectives}\label{Sec:Conclusion}



This work was concerned with the study of complex models having particular symmetries encoded into a structure of mixture. It opens the path to study other such models, at several other levels:
\begin{enumerate}

\item Large or moderate deviations in the vein of \cite{EichelsbacherLoeweMDP, EllisBook}, mod-$ \phi $ convergence \cite{FerayMeliotNikeghbali1, FerayMeliotNikeghbali2}, local CLT \cite{FleermannKirschToth} and functional CLT in the vein of \cite{JeonCW, Papangelou} for the magnetisation with the De Finetti randomisation and the surrogate approach. Interesting questions are e.g.: will the LDP surrogate coming from the randomisation of the precise Cramer/Bahadur-Rao theorem for sums of i.i.d.'s give the correct LDP for the magnetisation? The same questions can be asked for the local CLT or the functional CLT, starting from the one for sums of i.i.d.'s followed by a randomisation. In the case of the LDP, one does not have a surrogate random variable but a surrogate probability (i.e. the randomisation of a probability); one will have to rescale again the randomised LDP to get the correct rate (which should imply a fair piece of analysis). In the case of a local CLT, one can use a surrogate with values in a lattice, such as the Poisson surrogate in Remark~\ref{Rk:PoissonSurrogate}. In the case of the functional CLT, the results of Jeon and Papangelou \cite{JeonCW, Papangelou} can certainly be obtained with the surrogate approach with a modification of the arguments presented here, at least in the Bernoulli case. Nevertheless, the functional rescaling of the i.i.d. sums will give a Brownian motion and not a Brownian bridge, i.e. the functional surrogate will then be of the form $ m_n(\Pb_{\!\! n}) + \sigma_n(\Pb_{\!\! n}) B_t $ where $ B $ is a standard Brownian motion. As a result, the critical case $ \beta = 1 $ will require an additional argument to turn the Brownian motion into a Brownian bridge (the rescaling performed by Papangelou uses a random shift given precisely by the randomisation). As a by-product of such a study, one will get as a bonus the speed of convergence in Kolmogorov distance (say), a question that is not treated in \cite{JeonCW, Papangelou}. 

\item The functional CLTs studied in \cite{ChagantySethuraman1987, JeonCW, Papangelou} or the extension of the Ellis-Newman Theorem~\ref{Theorem:FluctuationsMagClassical} for general measures on spins 
show the emergence of a general condition on the class of random variables considered, called the \textit{class L} in \cite{ChagantySethuraman1987, EllisNewman}: their cumulant generating series $ \Lambda_X(s) := \log\Esp{e^{s X} } $ must satisfy ``$ s \mapsto \frac{s^2}{2} + \Lambda_X(s) $ has a unique minimiser''. On the other hand, the general surrogate approach will be concerned with the replacement of the expectation and variance of the random variables $ X_k $ generalising the Bernoulli random variables $ B_k(p) $, implying a surrogate of the form $ m_X(V_n) + \sigma_X(V_n) G $ where $ V_n $ will be the new De Finetti randomisation. Finding conditions on the expectation and variance $ (m_X, \sigma_X^2) $ so that the (functional) CLT remains valid and linking them with the conditions defining the class \textit{L} random variables is an interesting question. 

\item Change of model with exchangeability, e.g. the Curie-Weiss-Potts model \cite{EichelsbacherMartschink} or the inhomogeneous Curie-Weiss model \cite{DommersEichelsbacher}, the Curie-Weiss-Heisenberg or Curie-Weiss clock model \cite{LiggettSteifToth}. Such models will still be exchangeable and Lemma~\ref{Lemma:DeFinettiCWspins} will give their De Finetti measure in a straightforward way. More generally, all models with exchangeable pairs can be scrutinised to see if a De Finetti measure is present, and the philosophy of this article will then apply.

\item Dynamical Curie-Weiss model and dynamical De Finetti randomisation: replacing the spins $ (X_k)_k $ by $ (X_k(t))_k $ for a good dynamic such as the spin-flip dynamics of \cite{KulskeLeNy} will lead to a dynamical exchangeability randomisation whose dynamic will certainly be interesting. The interplay between $ t\to+\infty $ and $ n \to +\infty $ could lead to interesting phenomena in the underlying surrogate, in particular concerning the marginally relevant disorder. 

\item Change of model with a weight that reads as a Laplace transform, to mimic the proof of Lemma~\ref{Lemma:DeFinettiCWspins} and obtain the De Finetti measure in the same way. For instance, replacing the Gaussian by a Poisson random variable gives the following \textit{Poissonian Curie-Weiss model}:
\begin{align*}
\qquad \Pp_{PoCW(\beta, n) } = \frac{e^{ \psi(S_n) }}{\Esp{ e^{ \psi(S_n) } } } \bullet \Pp_{(X_1, \dots, X_n) }, \quad \psi(x) := e^{\lambda x} - 1 - \mu x = \log\Esp{ e^{x(\Poisson(\lambda) - \mu) } }
\end{align*}
and this time, the De Finetti randomisation will be discrete, of law $ \mu_n(k) = \frac{1}{\Ze_n(\lambda, \mu) \, k!} \cosh(k)^n e^{  k (\ln(\lambda) - \mu) } $, hence the relevance of a Poisson surrogate.

\item Self-Organised Criticality, in short, SOC \cite{CerfGorny} : the SOC-Curie-Weiss model is of the type 
\begin{align*}
\Pp_{SOC(\beta, n) } = \frac{e^{ \beta H(X_1, \dots, X_n) }}{\Esp{ e^{ \beta H(X_1, \dots, X_n) } } } \bullet \Pp_{(X_1, \dots, X_n) }, \qquad H(\xb) = \frac{\prth{ \sum_{k = 1}^n x_k}^2 }{1 + \sum_{k = 1}^n x_k^2 }, \quad \beta = 1.
\end{align*}

$H$ is again a symmetric function, hence, the vector is exchangeable. There is thus a surrogate coming from the CLT (the same as ours), but with a different De Finetti measure, not immediately straightforward to compute.

\end{enumerate}

We hope to come back to some of these questions in the future.

\appendix

\section{Analysis of diverse constants}\label{Sec:Estimates}
\medskip
\subsection{Renormalisation constants}

\subsubsection{Case $ \beta < 1 $}

\begin{lemma}[Asymptotic analysis of $ \Ze_{n, \beta} $ for $ \beta < 1 $]\label{Lemma:AsymptoticRenormConstant}
We have
\begin{align*}
\abs{ \sqrt{ \frac{ C_\beta }{2 \pi } } \times \Ze_{n, \beta} \sqrt{n} - 1 } \leq \frac{1}{ n}   \frac{C_\beta^{-9/2}}{4}.
\end{align*}
\end{lemma}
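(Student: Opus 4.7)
The plan is to leverage the estimates already produced for $\gamma_n$ and $\widetilde{\gamma}_n$ during the proof of Theorem~\ref{Theorem:MagnetisationBetaSmaller1}. By the normalization $\int_{-1}^1 f_{n,\beta}(t)\,dt = 1$, the change of variable $x = t\sqrt{n}$ gives
$$\sqrt{n}\,\Ze_{n,\beta} \, = \, \int_{-\sqrt{n}}^{\sqrt{n}} e^{-\varphi_n(x)}\,dx, \qquad \sqrt{\frac{2\pi}{C_\beta}} \, = \, \int_{\Rr} e^{-C_\beta x^2/2}\,dx,$$
so that $\sqrt{C_\beta/(2\pi)}\,\Ze_{n,\beta}\sqrt{n} - 1$ is, up to the prefactor $\sqrt{C_\beta/(2\pi)}$, exactly the difference between these two integrals.

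First I would decompose this difference via the triangle inequality through the intermediate variance $C_{\beta,n} := C_\beta - 2/n$ dictated by the matching $\varphi_n''(0) = C_{\beta,n}$:
\begin{align*}
\int_{-\sqrt{n}}^{\sqrt{n}} e^{-\varphi_n}\,dx - \sqrt{\tfrac{2\pi}{C_\beta}}
&= \int_{-\sqrt{n}}^{\sqrt{n}} \prth{e^{-\varphi_n(x)} - e^{-C_{\beta,n} x^2/2}}dx \\
&\quad + \int_{-\sqrt{n}}^{\sqrt{n}} \prth{e^{-C_{\beta,n} x^2/2} - e^{-C_\beta x^2/2}}dx \ - \ \int_{\abs{x}>\sqrt{n}} e^{-C_\beta x^2/2}\,dx.
\end{align*}
The first integral equals $2\sqrt{2\pi/C_{\beta,n}}\,\widetilde{\gamma}_n$ by symmetry, and is $O_\beta(1/n)$ by the Taylor bound $0 \leq \kappa_n(x) \leq M_n(\varepsilon)\,x^4/24$ on $[0,\sqrt{n}-\varepsilon)$ applied with $\varepsilon = \sqrt{n}/4$, together with the closed-form $M_n(3\sqrt{n}/4) = K_1(\beta)/n + K_2(\beta)/n^2$ established in the main text. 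The second integral factors as $\int (e^{x^2/n}-1)\,e^{-C_\beta x^2/2}dx$ and is bounded by $(3\sqrt{2\pi}/n)\,C_\beta^{-3/2}$ via $\abs{e^{x^2/n}-1} \leq 3x^2/n$ on $(-\sqrt{n},\sqrt{n})$ and the second moment of a centered Gaussian. The third is exponentially small.

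Summing the three contributions and multiplying by $\sqrt{C_\beta/(2\pi)}$ immediately produces the order $O_\beta(1/n)$. The only genuine difficulty is collapsing all the $\beta$-dependent constants into the compact majorization $C_\beta^{-9/2}/4$. The dominant contribution is $K_1(\beta)\,C_{\beta,n}^{-2}/(4n)$ from the Taylor remainder, which after multiplication by $\sqrt{C_\beta/(2\pi)}$ carries a factor $C_{\beta,n}^{-5/2}\sqrt{C_\beta}$; the subleading piece contributes an additional $C_\beta^{-1}/n$. I expect both to be absorbed into $C_\beta^{-9/2}/(4n)$ using the elementary comparison $C_{\beta,n}^{-5/2} \leq 2\,C_\beta^{-5/2}$ (valid as soon as $n \geq 4/C_\beta$) together with an upper bound on the explicit polynomial $K_1(\beta)$ coming from the SageMath computation of $P(1/4)\Argtanh(1/4) - Q_\beta(1/4)$. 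The main obstacle is therefore not the order of magnitude --- which is transparent from the work already done in Section~\ref{SubSec:Smooth:Beta<1} --- but the careful bookkeeping required to distill the resulting mess of constants into the single clean expression $C_\beta^{-9/2}/4$.
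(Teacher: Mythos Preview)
Your approach recycles the estimates on $\widetilde{\gamma}_n$ from the proof of Theorem~\ref{Theorem:MagnetisationBetaSmaller1}, working with the variable $x = t\sqrt{n}$ on the bounded interval $(-\sqrt{n},\sqrt{n})$ and the function $\varphi_n$, which carries singularities at the endpoints. This forces you into the main-interval/remaining-interval split and leaves you with the opaque constants $K_1(\beta), K_2(\beta)$ coming from $\varphi_n^{(4)}(\sqrt{n}-\varepsilon)$ and the explicit polynomials $P, Q_\beta$; as you yourself recognise, collapsing these into the single clean factor $C_\beta^{-9/2}/4$ is not straightforward, and it is doubtful that it can be done without essentially starting over. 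Your decomposition is correct and does deliver the order $O_\beta(1/n)$, but not the stated constant.

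The paper takes a much shorter route that sidesteps this difficulty entirely. It first changes variable to $x = \Argtanh(t)$, which sends $(-1,1)$ onto all of $\Rr$ and removes the boundary singularity, and only then rescales by $\sqrt{n}$. In the new variable one has
\[
\sqrt{n}\,\Ze_{n,\beta} \;=\; \int_\Rr e^{-C_\beta y^2/2 - \psi_n(y)}\,dy,
\qquad \psi_n(y) := \tfrac{y^2}{2} - n\log\cosh\!\bigl(y/\sqrt{n}\bigr),
\]
where $\psi_n$ is smooth on \emph{all} of $\Rr$, nonnegative (from $\cosh t \leq e^{t^2/2}$), and vanishes to fourth order at the origin. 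Since $\psi_n^{(4)}$ is globally bounded by $2/n$, a single fourth-order Taylor expansion gives $0 \leq \psi_n(y) \leq y^4/(12n)$ uniformly, and the bound on $\sqrt{2\pi/C_\beta} - \sqrt{n}\,\Ze_{n,\beta}$ follows in one line from the Gaussian fourth moment $\int_\Rr y^4 e^{-C_\beta y^2/2}\,dy$. No interval splitting, no SageMath polynomials, and the explicit constant drops out directly. The key insight you are missing is that the $\Argtanh$ substitution trades a singular integrand on a finite interval for a regular one on the whole line.
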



\begin{proof}
Using \eqref{Eq:DeFinettiMeasureT}, one has
\begin{align*}
\Ze_{n, \beta}  := \int_{(-1, 1) } e^{- \frac{n}{2} F_\beta(t) } \frac{dt}{1 - t^2}, \qquad F_\beta(t) := \frac{1}{\beta} \Argtanh(t)^2 + \ln(1 - t^2).
\end{align*}

Setting $ x = \Argtanh(t) $ and $ y := \sqrt{n} x $ gives 
\begin{align*}
\Ze_{n, \beta}  := \int_\Rr e^{- \frac{n}{2} F_\beta(\tanh(x)) } dx = \int_\Rr e^{- \frac{n}{2} F_\beta(\tanh(y/\sqrt{n})) } \frac{dy}{\sqrt{n} }, 
\end{align*}
with 
\begin{align*}
\frac{n}{2} F_\beta(\tanh(y/\sqrt{n}))  & = \frac{y^2}{2\beta} + \frac{n}{2}  \log\prth{ 1 - \tanh(y/\sqrt{n})^2 } \\
                       & = \frac{y^2}{2}\prth{\frac{1}{\beta} - 1} + \frac{y^2}{2} + \frac{n}{2}  \log\prth{ 1 - \tanh(y/\sqrt{n})^2 } \\
                       & =: \frac{y^2}{2} C_\beta + \psi_n(y),
\end{align*}
where  
\begin{align}\label{Def:CBetaAndPsiN}
\begin{aligned}
C_\beta & := \frac{1}{\beta} - 1, \\
\psi_n(y) & := \frac{y^2}{2} + \frac{n}{2}  \log\,\prth{ 1 - \tanh(y/\sqrt{n})^2 } = \frac{y^2}{2} - n \log\cosh\prth{ \tfrac{y}{\sqrt{n}} } =: n \psi\prth{\frac{y}{\sqrt{n } } }, \\
\psi(y) & := \frac{y^2}{2} - \log\cosh\prth{ y },
\end{aligned}
\end{align}

since $ 1 - \tanh^2 = \cosh^{-2} $. The equality
$
\int_\Rr e^{-\frac{y^2}{2} C_\beta } dy = \sqrt{\frac{2\pi}{C_\beta} }
$ 
implies that
$$ 
\sqrt{\frac{2\pi}{C_\beta} } - \Ze_{n, \beta} \sqrt{n}  = \int_\Rr e^{-\frac{y^2}{2} C_\beta } dy - \int_\Rr e^{- \frac{n}{2} F_\beta(\tanh(y/\sqrt{n})) } dy    
                 =  \int_\Rr \prth{ 1 - e^{ - \psi_n(y) }   } e^{-\frac{y^2}{2} C_\beta } dy. 
$$ 

The study of $ \psi_n $ with SageMath \cite{SageMath} shows that $ \psi_n $ is non negative on $ \Rr $ with only cancelation in $ 0 $. This can also be seen with the inequality $ \cosh(t) \leq \exp(\frac{t^2}{2}) $ that follows from the termwise comparison of the Taylor series of each function. As a result, the previous quantity is positive on $ \Rr^* $. Moreover, in the same vein as for $ \kappa_n $ defined in \eqref{Def:CbetaNandKappaN}, one has $ \psi_n(0) = \psi_n'(0) = \psi_n''(0) = \psi_n'''(0) = 0 $, and the Taylor formula with integral remainder gives at the fourth order
\begin{align*}
\psi_n (y) = \frac{y^4}{6} \int_0^1 (1 - \alpha)^3 \psi_n^{(4)}( \alpha y ) d\alpha. 
\end{align*}

A computation with SageMath \cite{SageMath} gives 
\begin{align*}
\psi_n'(y) & = y - \sqrt{n} \tanh\prth{ \frac{y}{\sqrt{n}} }, \quad \psi_n''(y) = \tanh\prth{\frac{y}{\sqrt{n}} }^2, \\
\psi_n'''(y) & = \frac{2}{\sqrt{n}}\tanh\prth{\frac{y}{\sqrt{n}} } \prth{ 1 - \tanh\prth{ \frac{y}{\sqrt{n}} }^2 },  \\
\psi_n^{(4)}(y) & = \frac{2}{n} \prth{ 1 - \tanh\prth{ \frac{y}{\sqrt{n}} }^2} \prth{ 1 - 3 \tanh\prth{ \frac{y}{\sqrt{n}} }^2}.  
\end{align*}
%
%
%
%
%
%

Moreover, the function $ y \mapsto n \log( 1 - \tanh(y/\sqrt{n})^2 ) = - 2n \log\cosh(y/\sqrt{n}) $ has bounded derivatives. One thus has
\begin{align*}
0 \leq  1 - e^{ - \psi_n(y) } \leq \psi_n (y) \leq \frac{y^4}{24} \norm{\psi_n^{(4)}}_\infty.  
\end{align*}

%
%
%
%
%
%

Since $ \abs{ \psi_n^{(4)}(y) } := \frac{2}{n} \abs{ 1 - \tanh(\frac{y}{\sqrt{n}})^2}\,  \abs{1 - 3 \tanh(\frac{y}{\sqrt{n}})^2} \leq \frac{2}{n}  $, one gets
$$ 
0 \leq \sqrt{\frac{2\pi}{C_\beta} } - \Ze_{n, \beta} \sqrt{n}   = \int_\Rr \prth{ 1 - e^{ - \psi_n(y) }   } e^{-\frac{y^2}{2} C_\beta } dy   
                 \leq \frac{2}{24 \, n} \int_\Rr  y^4 e^{-\frac{y^2}{2} C_\beta } dy = \frac{1}{4 \, n} \sqrt{ 2\pi } C_\beta^{-5},
$$ 
hence the result using the fourth moment of a Gaussian (equal to 3).
\end{proof}

\subsubsection{Case $ \beta = 1 $}

\begin{lemma}[Asymptotic analysis of $ \Ze_{n, 1}  $]\label{Lemma:AsymptoticRenormConstant2}
We have
\begin{align*}
\abs{ n^{1/4} \frac{ \Ze_{n, 1}  }{\Ze_\Fb} - 1 } = O\prth{\frac{1}{\sqrt{n}}}.
\end{align*}
\end{lemma}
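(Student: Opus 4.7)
The plan is to parallel the proof of Lemma~\ref{Lemma:AsymptoticRenormConstant}, with the rescaling adapted to the degenerate case $\beta=1$. First, I would apply the change of variables $t = \tanh(x)$ exactly as in that lemma. Since $dt/(1-t^2) = dx$ and $\ln(1-t^2) = -2\ln\cosh(x)$, this yields
\[
\Ze_{n,1} \;=\; \int_\Rr e^{-n\psi(x)}\,dx, \qquad \psi(x) \;:=\; \frac{x^2}{2} - \log\cosh(x),
\]
with $\psi$ the function already introduced in \eqref{Def:CBetaAndPsiN}. The key difference with the case $\beta<1$ is that the quadratic term $C_\beta x^2/2$ in the phase vanishes at $\beta=1$, so $\psi$ has the purely quartic expansion $\psi(x) = x^4/12 - x^6/45 + O(x^8)$, which dictates the critical rescaling $y = n^{1/4} x$:
\[
n^{1/4}\Ze_{n,1} \;=\; \int_\Rr e^{-n\psi(y/n^{1/4})}\,dy, \qquad n\psi(y/n^{1/4}) \;=\; \frac{y^4}{12} + g_n(y),
\]
with $g_n(y) = -y^6/(45\sqrt{n}) + O(y^8/n)$ on any bounded region; the target is $\Ze_\Fb = \int_\Rr e^{-y^4/12}\,dy$.

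Next, I would split the integration at $|y|=A_n$ and bound the two regions separately. On the main region $|y| \leq A_n$, the elementary bound $|e^{-u}-1| \leq 2|u|$ (valid for $|u|\leq 1$) gives
\[
\bigl| e^{-n\psi(y/n^{1/4})} - e^{-y^4/12} \bigr| \;\leq\; C\,e^{-y^4/12}\left( \frac{y^6}{\sqrt{n}} + \frac{y^8}{n}\right),
\]
provided $|g_n(y)| \leq 1$. Integrating against the finite moments $\int_\Rr y^{2k} e^{-y^4/12}\,dy < \infty$ then produces an $O(1/\sqrt{n})$ contribution, the leading error coming from the $y^6/\sqrt{n}$ term, with explicit constant $\tfrac{1}{45}\int_\Rr y^6 e^{-y^4/12}\,dy$. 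The choice $A_n = n^{1/12}$ makes $A_n^6/\sqrt{n} = 1/45$ and $A_n^8/n = n^{-1/3}$, so $|g_n|$ is indeed uniformly bounded on this region.

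For the tails $|y|>A_n$ both integrands are negligible. The target integrates to $O(e^{-A_n^4/24})=O(e^{-n^{1/3}/24})$. For $e^{-n\psi(y/n^{1/4})}$, I would use that $\psi(x) \geq c\,x^4$ on a neighbourhood of $0$ (via its Taylor expansion) and $\psi(x) \geq \psi(1) > 0$ for $|x| \geq 1$ (with in fact $\psi(x) \sim x^2/2$ as $|x|\to\infty$): together these give $n\psi(y/n^{1/4}) \geq \min(c y^4,\, c' n)$, producing exponentially small contributions from both $A_n \leq |y| \leq n^{1/4}$ and $|y| \geq n^{1/4}$. Assembling the pieces yields $n^{1/4}\Ze_{n,1} = \Ze_\Fb + O(1/\sqrt{n})$, which is equivalent to the claim.

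The hard part will be the careful bookkeeping of the Taylor remainder in $g_n$ uniformly across the main region~--- in particular, producing the explicit $O(y^8/n)$ bound as $|y|$ grows toward $n^{1/12}$. This should follow from the integral form of Taylor's theorem applied to $\psi^{(8)}$, which is bounded on any compact subset of $(-1,1)$ (and $A_n/n^{1/4} = n^{-1/6} \to 0$ is eventually in such a subset), in the same spirit as the explicit bound on $\psi_n^{(4)}$ via its $\tanh$ representation used in Lemma~\ref{Lemma:AsymptoticRenormConstant}. The reason the rate is only $O(1/\sqrt{n})$ rather than the $O(1/n)$ obtained at $\beta<1$ is simply that here the first nonvanishing correction to the phase is of odd order ($y^6/\sqrt{n}$) rather than the $y^4/n$ correction governing the subcritical case.
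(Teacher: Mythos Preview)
Your argument is correct and shares the paper's setup (the $\tanh$ substitution and the $n^{1/4}$ rescaling), but the error control is organised differently. You factor out the \emph{target} density $e^{-y^4/12}$, so that the residual $e^{-g_n}-1$ is only dominated by $|g_n|$ where $|g_n|$ stays bounded; this forces the split at $A_n=n^{1/12}$ and the separate tail estimates. The paper instead factors out the \emph{other} envelope,
\[
n^{1/4}\Ze_{n,1} - \Ze_\Fb \;=\; \int_\Rr \bigl(1 - e^{-\widetilde{\psi}_n(y)}\bigr)\, e^{-n\psi(y/n^{1/4})}\,dy, \qquad \widetilde{\psi}_n(y) := \tfrac{y^4}{12} - n\psi\!\prth{\tfrac{y}{n^{1/4}}} = -g_n(y),
\]
and observes that $\widetilde{\psi}_n \geq 0$ everywhere (because $\widetilde{\psi}^{(k)}(0)=0$ for $k\leq 3$ and $\widetilde{\psi}^{(4)}=2(\sinh^4+4\sinh^2)/\cosh^4\geq 0$). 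Then $0\leq 1-e^{-\widetilde{\psi}_n}\leq \widetilde{\psi}_n$ holds \emph{globally}; and since $\widetilde{\psi}^{(6)}$ is a polynomial in $\tanh$ with $\|\widetilde{\psi}^{(6)}\|_\infty\leq 136$, the sixth-order Taylor remainder gives $\widetilde{\psi}_n(y)\leq \tfrac{136}{720}\,y^6/\sqrt{n}$ for all $y\in\Rr$. No splitting, no tail estimate: one integrates $y^6$ against $e^{-n\psi(y/n^{1/4})}$ and recognises a sixth moment. Your route is the standard Laplace dissection and is perfectly sound; the paper's buys a one-line bound plus the one-sided inequality $n^{1/4}\Ze_{n,1}\geq \Ze_\Fb$. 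One small slip: $\psi$ and all its derivatives live on $\Rr$, not $(-1,1)$, and in fact $\psi^{(k)}$ for $k\geq 2$ is a polynomial in $\tanh$ and hence globally bounded --- so your compactness caveat for the eighth-order remainder is unnecessary.
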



\begin{proof}
Using \eqref{Eq:DeFinettiMeasureT}, one has
\begin{align*}
\Ze_{n, 1}  := \int_{(-1, 1) } e^{- \frac{n}{2} F_1(t) } \frac{dt}{1 - t^2}, \qquad F_1(t) := \Argtanh(t)^2 + \ln(1 - t^2).
\end{align*}

Setting $ x = \Argtanh(t) $ and $ y := n^{1/4}  x $ gives 
\begin{align*}
\Ze_{n, 1}  := \int_\Rr e^{- \frac{n}{2} F_1(\tanh(x)) } dx = \int_\Rr e^{- \frac{n}{2} F_1(\tanh(y/n^{1/4} )) } \frac{dy}{n^{1/4}  } =: \int_\Rr e^{- n \, \psi(y/n^{1/4} ) } \frac{dy}{n^{1/4}  },
\end{align*}
with $ \psi $ defined in \eqref{Def:CBetaAndPsiN}.

Since $ \Ze_\Fb = \int_\Rr e^{-\frac{y^4}{12} } dy $, one has
$$ 
n^{1/4} \Ze_{n, 1}  - \Ze_\Fb  = \int_\Rr e^{- n \, \psi(y/n^{1/4} ) } dy - \int_\Rr e^{-\frac{y^4}{12} } dy    
                =  \int_\Rr \prth{ 1 - e^{ - \widetilde{\psi}_n(y) }   } e^{- n \, \psi(y/n^{1/4} ) } dy, 
$$ 
where 
\begin{align}\label{Def:PsiTilde}
\begin{aligned}
\widetilde{\psi}_n(y) & := \frac{y^4}{12} - n \psi\prth{ \frac{y}{ n^{1/4} } } = n \prth{  \frac{1}{12} \prth{ \frac{y}{ n^{1/4} } }^4 - \psi\prth{ \frac{y}{ n^{1/4} } } } =: n \widetilde{\psi}\prth{ \frac{y}{ n^{1/4} } }, \\
\widetilde{\psi}(y) & := \frac{y^4}{12} - \psi(y) = \frac{y^4}{12} - \frac{y^2}{2} + \log\cosh(y).
\end{aligned}
\end{align}

The study of $ \widetilde{\psi}_n $ with SageMath \cite{SageMath} shows that it is non negative on $ \Rr $ with only cancelation in $ 0 $ (see also the Taylor formula at the fourth order below). As a result, the previous quantity is positive on $ \Rr^* $.  Moreover, in the same vein as for $ \kappa_n $ defined in \eqref{Def:CbetaNandKappaN}, one has $ \widetilde{\psi}_n^{(k)}(0) = 0 $ for all $ k \in \intcrochet{0, 5} $, and the Taylor formula with integral remainder gives at the sixth order
\begin{align*}
\widetilde{\psi}_n (y) = \frac{y^6}{120} \int_0^1 (1 - \alpha)^5 \widetilde{\psi}_n^{(6)}( \alpha y ) d\alpha  = \frac{y^6}{120 \sqrt{n} } \int_0^1 (1 - \alpha)^5 \widetilde{\psi}^{(6)}( \alpha y n^{-1/4} ) d\alpha.
\end{align*}

A computation with SageMath \cite{SageMath} gives 
\begin{align*}
\widetilde{\psi}'(y) &=  \tanh(y)  + \frac{y^3}{3} - y, \quad  
\widetilde{\psi}''(y) = - \tanh(y)^2 + y^2,   \\
\widetilde{\psi}'''(y) &=  2 \tanh(y) \prth{ 1 - \tanh(y)^2 } + 2y, \quad
\widetilde{\psi}^{(4)}(y) = 2 \, \frac{\sinh(y)^4 + 4\sinh(y)^2}{\cosh(y)^4} \ \geq 0 \\
\widetilde{\psi}^{(5)}(y) &= - 8 \, \frac{\prth{\cosh(y)^4 - 3} \sinh(y) }{ \cosh(y)^5}, \quad 
\widetilde{\psi}^{(6)}(y) = 4\, \frac{4\cosh(y)^4 - 30\cosh(y)^2 +  30}{  \cosh(y)^6}. 
\end{align*}

%
%
%
%
%
%

Note that the Taylor formula at the fourth order gives $ \psi_n (y) = \frac{y^4}{6} \int_0^1 (1 - \alpha)^3 \widetilde{\psi}^{(4)}( \alpha y /n^{1/4} ) d\alpha $ and since $ \widetilde{\psi}^{(4)} \geq 0 $, it is easily seen that $ \widetilde{\psi}_n $ is non negative.


One thus has
\begin{align*}
0 \leq  1 - e^{ - \widetilde{\psi}_n(y) } \leq \widetilde{\psi}_n (y) \leq \frac{y^6}{720} \norm{\widetilde{\psi}_n^{(6)}}_\infty   = \frac{y^6}{720\sqrt{n} } \norm{\widetilde{\psi}^{(6)}}_\infty.
\end{align*}

Since 
\begin{align*}
\widetilde{\psi}^{(6)}(y) & = 4\, \frac{4\cosh(y)^4 - 30\cosh(y)^2 +  30}{  \cosh(y)^6} \\
                 & = 4 \prth{ 4(1 - \tanh(y)^2) - 30 (1 - \tanh(y)^2)^2 + 30 (1 - \tanh(y)^2)^3  } \leq 136,  
\end{align*}
one gets
\begin{align*}
0 \leq n^{1/4} \Ze_{n, 1} - \Ze_\Fb & = \int_\Rr \prth{ 1 - e^{ - \widetilde{\psi}_n(y) } } e^{- n \psi ( y/n^{1/4})  } dy \\
                & \leq \frac{136}{720 \, \sqrt{n}} \int_\Rr  y^6 e^{-n \psi ( y/n^{1/4}) } dy = \frac{136}{720 \,  \sqrt{n}}  \,  n^{1/4} \Ze_{n, 1}\Esp{ \Fb_{\!\! n}^6 },
\end{align*}
hence
\begin{align*}
0 \leq 1 - \frac{\Ze_\Fb }{n^{1/4} \Ze_{n, 1} } &   \leq \frac{136}{720 \, \sqrt{n}}  \prth{ \Esp{ \Fb^6 } + o(1) },
\end{align*}
which concludes the proof.
%
%
%
%
%
%
%
%
%
%
%
%
\end{proof}

\subsubsection{Case $ \beta_n = 1 \pm \frac{\gamma}{\sqrt{n}}$, $ \gamma \in \Rr $.}

\begin{lemma}[Asymptotic analysis of $ \norm{f_{\! \Fb_{\!\! n, \gamma} } }_\infty $]\label{Lemma:MaxDensityFn}
With $ \Fb_{\!\! n, \gamma} $ defined in \eqref{Def:FnGamma} and $ \beta_n := 1 - \frac{\gamma}{\sqrt{n}} $, we have
\begin{align*}
\max_{x \in \Rr} f_{\! \Fb_{\!\! n, \gamma} }(x) = \frac{1}{\Ze_{\Fb_{\!\gamma}}} +  O\prth{\frac{1}{\sqrt{n}}}.
\end{align*}
\end{lemma}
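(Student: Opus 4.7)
The plan is to identify the maximum of the density by locating the minimum of the potential $\Phi_n$ appearing in $f_{\Fb_n}$, evaluating it precisely, and then combining with the normalisation asymptotic of Lemma~\ref{Lemma:AsymptoticRenormConstant2}. Writing
\[
f_{\Fb_n}(x) = \frac{1}{n^{1/4}\Ze_{n,1}}\, e^{-\Phi_n(x)}\, \Unens{\abs{x} < n^{1/4}},\qquad \Phi_n(x) := \frac{n}{2}\Argtanh(x/n^{1/4})^2 + \prth{\frac{n}{2}+1}\ln(1 - x^2/\sqrt{n}),
\]
I would reduce the problem to computing $\min \Phi_n$ on $(-n^{1/4}, n^{1/4})$, since the prefactor is deterministic.

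First I would observe that $\Phi_n$ is even (both $\Argtanh^2$ and $\ln(1-x^2/\sqrt{n})$ are), and differentiate: the equation $\Phi_n'(x)=0$ reads
\[
n^{3/4}\,\Argtanh(x/n^{1/4}) \;=\; \prth{\sqrt{n} + \tfrac{2}{\sqrt{n}}}\,x.
\]
Expanding $\Argtanh(y) = y + y^3/3 + O(y^5)$ for $|y|<1/2$, this reduces to $\tfrac{x^3}{3} = \tfrac{2x}{\sqrt{n}} + O(x^5/n)$, hence either $x=0$ or $x^2 = 6/\sqrt{n} + O(1/n^{3/2})$. The Taylor expansion \eqref{Eq:TaylorRandomisation:beta=1} gives $\Phi_n''(0) = -2/\sqrt{n} < 0$, so $0$ is a local maximum of $\Phi_n$ (hence a local minimum of $f_{\Fb_n}$), while the two symmetric points $x_n^* = \pm\sqrt{6}\, n^{-1/4}(1 + O(1/n))$ are the global minimisers on the support (uniqueness on $(0, n^{1/4})$ follows from the Taylor analysis and the blow-up of $\Phi_n$ at the boundary).

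Next, using \eqref{Eq:TaylorRandomisation:beta=1} directly,
\[
\Phi_n(x) = -\frac{x^2}{\sqrt{n}} + \frac{x^4}{12}\prth{1 - \frac{6}{n}} + O\!\prth{\frac{x^6}{n^{3/2}}},
\]
I would substitute $(x_n^*)^2 = 6/\sqrt{n} + O(1/n^{3/2})$, yielding $(x_n^*)^4 = 36/n + O(1/n^2)$ and $(x_n^*)^6 = O(1/n^{3/2})$, and the controlling value
\[
\Phi_n(x_n^*) = -\frac{6}{n} + \frac{3}{n} + O\!\prth{\frac{1}{n^2}} = -\frac{3}{n} + O\!\prth{\frac{1}{n^2}},
\]
so that $e^{-\Phi_n(x_n^*)} = 1 + O(1/n)$.

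Finally I would combine with Lemma~\ref{Lemma:AsymptoticRenormConstant2}, which gives $n^{1/4}\Ze_{n,1} = \Ze_\Fb\prth{1 + O(1/\sqrt{n})}$, to conclude
\[
\max_{x\in\Rr} f_{\Fb_n}(x) = \frac{e^{-\Phi_n(x_n^*)}}{n^{1/4}\Ze_{n,1}} = \frac{1 + O(1/n)}{\Ze_\Fb\prth{1 + O(1/\sqrt{n})}} = \frac{1}{\Ze_\Fb} + O\!\prth{\frac{1}{\sqrt{n}}}.
\]
The main obstacle is purely bookkeeping: verifying that the Taylor remainder in \eqref{Eq:TaylorRandomisation:beta=1} applied at $x_n^* = O(n^{-1/4})$ is indeed $O(1/n^2)$ (so that the error in $e^{-\Phi_n(x_n^*)}$ does not pollute the $O(1/\sqrt{n})$ bound), and that the critical point analysis rules out any other competing minimiser away from $x_n^*$ — the latter is straightforward since $\Phi_n(x) \to +\infty$ at the endpoints $\pm n^{1/4}$ and the reduced cubic equation for $\Phi_n'=0$ admits only the roots identified above.
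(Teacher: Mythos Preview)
Your proposal is correct and follows essentially the same strategy as the paper: locate the nontrivial critical points of the exponent, evaluate the exponent there to get $1+O(1/n)$, and conclude via Lemma~\ref{Lemma:AsymptoticRenormConstant2}. The only difference is in bookkeeping: the paper substitutes $y=\Argtanh(x/n^{1/4})$ to recast the critical-point equation as the familiar magnetisation equation $\tanh(y)/y=(1+2/n)^{-1}$ and solves it via an implicit-function argument on $G(w)=1-\tanh(\sqrt{w})/\sqrt{w}$, whereas you Taylor-expand $\Argtanh$ directly in $x$ and then plug into \eqref{Eq:TaylorRandomisation:beta=1}; your route is slightly more direct and avoids the auxiliary functions $G$, $H$.
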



\begin{proof}
Using \eqref{Def:FnGamma}, \eqref{Def:TnBeta} and \eqref{Eq:DeFinettiMeasureT}, one gets
\begin{align*}
f_{\! \Fb_{\!\! n, \gamma} }(x) & = \frac{1}{n^{1/4} } f_{n, \beta_n}\prth{ \frac{x }{n^{1/4} } } \\
                                  & := \frac{1}{n^{1/4} \Ze_{n, \beta_n} }  e^{ -\frac{n}{2 } \Argtanh\,\prth{\frac{x}{n^{1/4}}}^2 - (\frac{n}{2} + 1) \ln\,\prth{1 - \frac{x^2}{\sqrt{n}}} } \Unens{\abs{x} < n^{1/4}} \\
                                  & =: \frac{e^{\xi_n(x) } }{n^{1/4} \Ze_{n, \beta_n} } \Unens{\abs{x} < n^{1/4}}
\end{align*}
and 
\begin{align*}
\xi_n'(x) = - n^{3/4} \, \frac{\beta_n\inv\Argtanh\prth{\frac{x}{n^{1/4}}} - \prth{1 + \frac{2}{n}} \frac{x}{n^{1/4}} }{1 - (\frac{x}{n^{1/4}})^2  }.
\end{align*}

We have $ \xi_n'(0) = 0 $, and $\xi_n''(0) = \frac{2}{\sqrt{n}} $, hence, $0$ is a minimum of $ \xi_n $ and $ f_{\! \Fb_{\!\! n, \gamma} } $. To find the maxima, set
$y := \Argtanh\prth{\frac{x}{n^{1/4}}}$. One needs to analyse the solutions of the equation
\begin{align}\label{Eq:MagBeta>1}
\frac{\tanh(y)}{y} = \frac{\beta_n}{1 + \frac{2}{n}}.  
\end{align}

This equation is well known in the study of the Curie-Weiss model as it gives the limiting magnetisation when $ \beta > 1 $ (see e.g. \cite[prop. 8]{KirschKriecherbauer}). An easy study shows that \eqref{Eq:MagBeta>1} has a unique solution $y_n$ on $\Rr_+$ and by symmetry a unique solution $-y_n$ on $\Rr_-$, both being global maxima. 

Define
\begin{align*}
G(w) & := 1 - \frac{\tanh(\sqrt{w})}{\sqrt{w} } = \frac{w}{3} + O(w^2) \qquad\mbox{when } w \to 0, \\
\varepsilon_n & = 1 - \frac{\beta_n}{1 + \frac{2}{n}} = 1 - \frac{1 - \frac{\gamma}{\sqrt{n}}}{1 + \frac{2}{n}} \ \ \equivalent{n \to +\infty} \ \frac{\gamma}{\sqrt{n}}\Unens{\gamma\neq 0} + \frac{2}{n}\Unens{\gamma = 0}.
\end{align*}

Then, the solution $y_n$ of \eqref{Eq:MagBeta>1} is such that $ y_n = \sqrt{w_n} $, where $G(w_n) = \varepsilon_n$. 
Since $G$ is bijective on $ \Rr_+ $, one can define its inverse $ G\inv $ for the composition $ \circ $ of functions, and, both functions being $ \Ce^\infty $,
\begin{align*}
w_n & = G\inv(\varepsilon_n) = G\inv(0) + (G\inv)'(0) \varepsilon_n + O(\varepsilon_n^2) = 3 \varepsilon_n + O(\varepsilon_n^2) , 
\end{align*}
hence
\begin{align*}
y_n & = \sqrt{w_n} = \sqrt{3\varepsilon_n}\prth{1 + O\prth{\varepsilon_n^2} } = \sqrt{3\varepsilon_n} + O\prth{ \varepsilon_n^{3/2}   }, \\
\frac{x_n}{ n^{1/4} } & :=  \tanh(y_n) = \tanh\prth{ \sqrt{3\varepsilon_n} + O\prth{ \varepsilon_n^{3/2}   } } =  \sqrt{3\varepsilon_n} + O\prth{ \varepsilon_n   }. 
\end{align*}

In the end, using $ H(x) := x + \log\prth{1 - \tanh(\sqrt{x})^2 } = \frac{x^2}{6} + O(x^3) $, one gets
\begin{align*}
\norm{ f_{\! \Fb_{\!\! n, \gamma} } }_\infty & = f_{\! \Fb_{\!\! n, \gamma} }(x_n)  
                    = \frac{1}{n^{1/4} \Ze_{n, \beta_n} } e^{ -\frac{n}{2\beta_n } \Argtanh\,\prth{\frac{x_n}{n^{1/4}}}^2 - (\frac{n}{2} + 1) \ln\,\prth{1 - \frac{x_n^2}{\sqrt{n}}} } \\
                  & = \frac{1}{n^{1/4} \Ze_{n, \beta_n} }  e^{ -\frac{n}{2 \beta_n} w_n - (\frac{n}{2} + 1) \ln\,\prth{1 - \,\tanh(\sqrt{w_n})^2 } }  \\
                  & = \frac{1}{n^{1/4} \Ze_{n, \beta_n} } e^{ -\frac{n}{2 \beta_n} H(w_n) - \ln\,\prth{1 - \,\tanh(\sqrt{w_n})^2 } } \\
                  & = \frac{1}{n^{1/4} \Ze_{n, \beta_n} }  e^{ -\frac{n}{12 \beta_n} w_n^2 + O(n w_n^3) + \frac{6}{n} + O(n^{-3/2})   } \\
                  & = \frac{1}{n^{1/4} \Ze_{n, \beta_n} }  e^{ \frac{\gamma^2}{4} \Unens{\gamma \neq 0} + \frac{9}{n} \Unens{\gamma = 0} + o(n\inv)	 } = \frac{1}{n^{1/4} \Ze_{n, \beta_n} } \prth{1 + O\prth{\frac{1}{\sqrt{n}} } }
\end{align*}
and Lemma~\ref{Lemma:AsymptoticRenormConstant3} gives $ n^{1/4} \Ze_{n, \beta_n} = \Ze_{\Fb_{\!\gamma}} + O\prth{\frac{1}{\sqrt{n}}} $, concluding the proof.
\end{proof}

\begin{lemma}[Asymptotic analysis of $ \Ze_{n, \beta_n}  $]\label{Lemma:AsymptoticRenormConstant3}
Set $ \Ze_{\Fb_{\! \gamma}} := \int_\Rr e^{-\frac{y^4}{12} - \gamma \frac{ y^2}{2}} dy $. Then,
\begin{align*}
\abs{ n^{1/4} \frac{ \Ze_{n, \beta_n}  }{\Ze_{\Fb_{\! \gamma}}} - 1 } = O\prth{\frac{1}{\sqrt{n}}}.
\end{align*}
\end{lemma}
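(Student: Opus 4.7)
The plan is to mimic the proof of Lemma~\ref{Lemma:AsymptoticRenormConstant2}, treating $\beta_n=1-\gamma/\sqrt{n}$ as a perturbation of $\beta=1$ so that the $\beta=1$ Taylor structure persists, with the extra Gaussian factor $e^{-\gamma y^2/2}$ emerging from the perturbation. First I would perform the same changes of variable as in Lemma~\ref{Lemma:AsymptoticRenormConstant2}: $t = \tanh(x)$ absorbs the $1/(1-t^2)$ factor (when one reads $\Ze_{n,\gamma}$ from \eqref{Eq:DeFinettiMeasureT}), and then $y=n^{1/4}x$ rescales in the critical way, giving
\begin{align*}
n^{1/4}\Ze_{n,\gamma} \;=\; \int_\Rr \exp\!\bigl(-\,n\,\psi_{n,\gamma}(y/n^{1/4})\bigr)\,dy,\qquad \psi_{n,\gamma}(x):=\frac{x^2}{2\beta_n}-\log\cosh(x).
\end{align*}

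Next, using $1/\beta_n = 1 + \gamma/\sqrt{n}+O(1/n)$ and the Taylor series $\log\cosh(x) = x^2/2 - x^4/12 + x^6/45 + O(x^8)$, I would expand
\begin{align*}
n\,\psi_{n,\gamma}(y/n^{1/4}) \;=\; \frac{y^4}{12}+\frac{\gamma y^2}{2} \;+\; R_{n,\gamma}(y),
\end{align*}
with $R_{n,\gamma}(y)=O\bigl((y^2+y^6)/\sqrt{n}\bigr)$ uniformly on $\{|y|\leq\overline{\varepsilon}n^{1/4}\}$ for any fixed $\overline{\varepsilon}\in(0,1)$. This reduces the problem, after writing
\begin{align*}
n^{1/4}\Ze_{n,\gamma}-\Ze_{\Fb_\gamma}\;=\;\int_\Rr \bigl(e^{-R_{n,\gamma}(y)}-1\bigr)\,e^{-\psi_\gamma(y)}\,dy,
\end{align*}
to controlling $|e^{-R_{n,\gamma}}-1|\leq|R_{n,\gamma}|e^{|R_{n,\gamma}|}$ against the polynomial moments of the limiting density $e^{-\psi_\gamma}/\Ze_{\Fb_\gamma}$, which are finite since $\psi_\gamma(y)\sim y^4/12$ at infinity. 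This yields a contribution of order $1/\sqrt{n}$ on the main region.

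The main obstacle is the integration near the boundary $|y|\simeq n^{1/4}$, where the Taylor remainder $R_{n,\gamma}$ blows up (the $\Argtanh$ has a singularity at $t=\pm 1$, i.e.\ $x=\pm\infty$, which corresponds to no $y$-boundary here, but the Taylor expansion of $\log\cosh$ is only effective while $y/n^{1/4}$ is small). I would treat this exactly as in the analogous steps of \S\ref{SubSec:Smooth:Beta<1} and \S\ref{SubSec:Smooth:Beta=1}: split $\Rr = [-\overline{\varepsilon}n^{1/4},\overline{\varepsilon}n^{1/4}]\cup\{|y|>\overline{\varepsilon}n^{1/4}\}$, use the bound $\log\cosh(x)\leq x^2/2$ (equivalently $\cosh(x)\leq e^{x^2/2}$) to dominate the integrand by $\exp\!\bigl(-\tfrac{\sqrt{n}}{2}(1/\beta_n-1)y^2\bigr)=\exp\!\bigl(-(\gamma/2+o(1))y^2\bigr)$ when $\gamma>0$, and use a sharper bound $\log\cosh(x)\leq x^2/2-x^4/12+x^6/45$ (valid on a fixed interval by a direct Taylor-with-integral-remainder argument, as in Lemma~\ref{Lemma:AsymptoticRenormConstant2}) when $\gamma\leq 0$, to show that both $\int_{|y|>\overline{\varepsilon}n^{1/4}} e^{-n\psi_{n,\gamma}(y/n^{1/4})}dy$ and $\int_{|y|>\overline{\varepsilon}n^{1/4}} e^{-\psi_\gamma(y)}dy$ are exponentially small in $n$, hence absorbed into the $O(1/\sqrt{n})$ error term. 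Choosing e.g.\ $\overline{\varepsilon}=1/4$ suffices, and combining the main-region and tail estimates yields $n^{1/4}\Ze_{n,\gamma}/\Ze_{\Fb_\gamma}=1+O(1/\sqrt{n})$.
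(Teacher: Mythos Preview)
Your overall strategy is sound and would yield the result, but it takes a more laborious route than the paper and contains one soft spot.

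\textbf{Comparison with the paper.} The paper does the same substitution $t=\tanh(x)$, $y=n^{1/4}x$, but then organises the estimate differently. It writes $\gamma_n:=\sqrt{n}(1/\beta_n-1)=\gamma/\beta_n$ so that
\[
n^{1/4}\Ze_{n,\gamma}=\int_\Rr e^{-\gamma_n y^2/2-n\psi(y/n^{1/4})}\,dy,
\]
and then uses a triangle inequality through the intermediate constant $\Ze_{\Fb_{\gamma_n}}$: the piece $|\Ze_{\Fb_{\gamma_n}}-\Ze_{\Fb_{\gamma}}|$ is immediately $O(1/\sqrt{n})$ since $|\gamma_n-\gamma|=O(1/\sqrt{n})$, while the piece $|n^{1/4}\Ze_{n,\gamma}-\Ze_{\Fb_{\gamma_n}}|$ reduces \emph{exactly} to the $\beta=1$ computation (same $\widetilde{\psi}_n$ as in Lemma~\ref{Lemma:AsymptoticRenormConstant2}), just integrated against $e^{-\gamma_n y^2/2-y^4/12}$ instead of $e^{-y^4/12}$. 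The key observation you are missing is that after the $\tanh$ substitution, $\widetilde{\psi}^{(6)}$ is a bounded function on all of $\Rr$ (it is built from powers of $\tanh$), so the Taylor remainder bound $\widetilde{\psi}_n(y)\le \tfrac{y^6}{720\sqrt{n}}\|\widetilde{\psi}^{(6)}\|_\infty$ holds \emph{globally}. Combined with $\widetilde{\psi}_n\ge 0$ (so $0\le 1-e^{-\widetilde{\psi}_n}\le\widetilde{\psi}_n$, no $e^{|R|}$ factor needed), this removes any need for a main/tail splitting. Your cut-off at $\overline{\varepsilon}n^{1/4}$ and the separate treatment of the two $\gamma$-signs are therefore unnecessary.

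\textbf{The soft spot.} Your tail argument for $\gamma\le 0$ invokes the inequality $\log\cosh(x)\le x^2/2-x^4/12+x^6/45$ ``on a fixed interval''. But the tail region is $|x|=|y|/n^{1/4}>\overline{\varepsilon}$, which is unbounded in $x$, and for large $|x|$ that inequality gives a \emph{lower} bound on $\psi_{n,\gamma}$ of the form $-x^6/45+\ldots$, which is useless. The correct tail bound (if you insist on a cut-off) is the Laplace-type observation that $\psi_{n,\gamma}(x)\to\psi(x)$ uniformly on $\{|x|\ge\overline{\varepsilon}\}$ and $\inf_{|x|\ge\overline{\varepsilon}}\psi(x)=\psi(\overline{\varepsilon})>0$, together with the quadratic growth $\psi_{n,\gamma}(x)\sim x^2/(2\beta_n)$ at infinity for integrability. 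This works, but again the paper's global argument avoids the issue entirely.
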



\begin{proof}
Using \eqref{Eq:DeFinettiMeasureT}, one has
\begin{align*}
\Ze_{n, \beta_n}  & := \int_{(-1, 1) } e^{- \prth{\frac{n}{2 \beta_n} \Argtanh(t)^2 + \frac{n}{2 } \ln(1 - t^2) }} \frac{dt}{1 - t^2} \\ 
                & = \int_\Rr e^{ - \prth{\frac{ \sqrt{n} }{2 \beta_n} y^2 - n \ln\,\prth{\cosh\,\prth{y/n^{1/4}} } } } \frac{dy}{n^{1/4} } \\
                & =  \int_\Rr e^{ - \prth{ \sqrt{n} (\frac{ 1 }{\beta_n} - 1) \frac{y^2}{2} - n \, \prth{  \ln\,\prth{\cosh\,\prth{y/n^{1/4}} }  - \, \prth{y/n^{1/4}}^2 /2 } } } \frac{dy}{n^{1/4} },
\end{align*}
having set $ y := n^{1/4} \Argtanh(t) $ and used $ 1 - \tanh(x)^2 = \cosh(x)^{-2} $.

\medskip 

Recall that $ \psi(y) := \frac{y^2}{2} - \ln\prth{\cosh(y) } $ is defined in \eqref{Def:CBetaAndPsiN} and set 
\begin{align*}
\gamma_n & := \sqrt{n} \prth{ \frac{ 1 }{\beta_n} - 1 } = \frac{\gamma}{\beta_n}, 
\end{align*}
so that 
\begin{align*}
\Ze_{n, \beta_n}  & =   \int_\Rr e^{ - \gamma_n \frac{y^2}{2} - n\, \psi( y/n^{1/4} ) }  \frac{dy}{ n^{1/4} }, \\
n^{1/4} \Ze_{n, \beta_n} - \Ze_{\Fb_{\! \gamma}} & = \int_\Rr \prth{ e^{ - \gamma_n \frac{y^2}{2} - n\, \psi( y/n^{1/4} ) } - e^{ - \gamma \frac{y^2}{2} - \frac{y^4}{12}  }  } dy.
\end{align*}

One has moreover with $ \gamma_n \geq \gamma  $
\begin{align*}
0 \leq \Ze_{\Fb_{\! \gamma}} - \Ze_{\Fb_{\! \gamma_n} } & = \int_\Rr \prth{ e^{ - \gamma \frac{y^2}{2} - \frac{y^4}{12}  } - e^{ - \gamma_n \frac{y^2}{2} - \frac{y^4}{12}  }  } dy = \int_\Rr \prth{ e^{ - \gamma \frac{y^2}{2}   } - e^{ - \gamma_n \frac{y^2}{2}   }  } e^{- \frac{y^4}{12} } dy \\
             & \leq (\gamma_n - \gamma) \int_\Rr \frac{y^2}{2} e^{- \frac{y^4}{12} } dy \leq \frac{\gamma^2}{2 \sqrt{n} } \Ze_\Fb  \Esp{\Fb^2}
\end{align*}
and the same inequality holds if $ \gamma \geq \gamma_n $ but with $ 0 \leq \Ze_{\Fb_{\! \gamma_n} } - \Ze_{\Fb_{\! \gamma}} $.

Last, the analysis of $  n^{1/4}\Ze_{n, \beta_n} - \Ze_{\Fb_{\! \gamma_n} } $ is similar to the previous one with $ \beta < 1 $, using exactly the same function $ \psi $ but with a different rescaling. One forms
\begin{align*}
0 \leq  n^{1/4}\Ze_{n, \beta_n} - \Ze_{\Fb_{\! \gamma_n} } & = \int_\Rr \prth{ e^{ - \gamma_n \frac{y^2}{2} - n \, \psi(y/n^{1/4}) } - e^{ - \gamma_n \frac{y^2}{2} - \frac{y^4}{12}  }  } dy =: \int_\Rr \prth{ 1 - e^{- \widetilde{\psi}_n(y) } } \widetilde{f}_{\Fb_{\!\gamma_n}}(y) dy  
\end{align*}
with $ \widetilde{f}_{\Fb_{\!\gamma_n}} := \Ze_{\Fb_{\!\gamma_n} } f_{\Fb_{\!\gamma_n}} $ and $ \widetilde{\psi}_n := n \widetilde{\psi}(\cdot/n^{1/4}) $ is defined in \eqref{Def:PsiTilde}.
%
%
%

Since $ \widetilde{\psi}_n \geq 0 $ and $ 0 \leq 1 - e^{-\widetilde{\psi}_n(y)} \leq \widetilde{\psi}_n(y) \leq \frac{y^6}{  \sqrt{n} } \norm{\widetilde{\psi}^{(6)} }_\infty \leq 136 \, \frac{y^6}{ \sqrt{n}} $, 
%
%
%
%
%
%
one gets
\begin{align*}
0 \leq  n^{1/4}\Ze_{n, \beta_n} - \Ze_{\Fb_{\!\gamma_n} } & \leq  \frac{136}{720} \, \frac{1}{ \sqrt{n}} \int_\Rr y^6 \widetilde{f}_{\Fb_{\!\gamma_n}}(y) dy  =  \frac{136}{720} \, \frac{\Ze_{\Fb_{\!\gamma_n} } \Esp{ \Fb_{\!\! \gamma_n}^6 } }{  \sqrt{n}} =  \frac{136}{720} \, \frac{\Ze_{\Fb_{\!\gamma } } \Esp{ \Fb_{\!\! \gamma}^6 } + o(1) }{ \sqrt{n}}.
\end{align*}

In the end, 
\begin{align*}
\abs{ n^{1/4} \Ze_{n, \gamma } - \Ze_{\Fb_{\! \gamma}}}  & \leq \abs{ n^{1/4} \Ze_{n, \gamma } - \Ze_{\Fb_{\gamma_n} } } + \abs{ \Ze_{\Fb_{\gamma_n} } - \Ze_{\Fb_{\! \gamma}} }  \\
              & \leq \frac{1}{ \sqrt{n}} \prth{ \frac{136}{ 720 } \Ze_{\Fb_{\!\gamma } } \Esp{  \Fb_{\!\! \gamma}^4 }  +  \frac{\gamma^2 }{2} \Ze_\Fb  \Esp{\Fb^2} + o(1) },
\end{align*}
which concludes the proof.
\end{proof}

\medskip
\subsubsection{Case $ \beta > 1 $}

Define for $ k \in \Nn $
\begin{align}\label{Def:ZnBetaK}
\Ze_{n, \beta}^{(k)} := 2\int_{\Rr_+} \abs{x - x_\beta}^k e^{-n \varphi_\beta(x)} dx , \qquad \varphi_\beta(x) := \frac{x^2}{2 \beta} - \log\cosh(x).
\end{align}

We will only be concerned with the cases $ k \in \{ 0, 1 \} $. Notice in particular that for $ k = 0 $, by symmetry of $ \varphi_\beta $, one has
\begin{align*}
\Ze_{n, \beta}^{(0)} := 2\int_{\Rr_+}  e^{-n \varphi_\beta(x)} dx = \int_\Rr  e^{-n \varphi_\beta(x)} dx = \Ze_{n, \beta}.
\end{align*}


\begin{lemma}[Asymptotic analysis of $ \Ze_{n, \beta}^{(k)} $ for $ \beta > 1 $]\label{Lemma:AsymptoticRenormConstant4}
For all $ k \in \Nn $ and with $ G \sim \Ns(0, 1) $, one has
\begin{align*}
\sqrt{n} \, e^{ n \varphi_\beta(x_\beta) } \Ze_{n, \beta}^{(k)}  & =   \prth{2 \sqrt{\frac{2\pi}{\varphi_\beta''(x_\beta)^{k + 1} } } \, \Esp{ \abs{G}^k }   +  O\prth{\frac{1}{\sqrt{n} } } }.
\end{align*}
\end{lemma}


\begin{proof}
One has
\begin{align*}
\sqrt{n} \, e^{ n \varphi_\beta(x_\beta) } \Ze_{n, \beta}^{(k)}   
                 & = 2 \sqrt{n}  \int_{\Rr_+}  \abs{x - x_\beta}^k e^{ -n (\varphi_\beta(x) - \varphi_\beta(x_\beta) )  } dx \\
                 & = 2 \sqrt{n} \int_{\Rr_+} \abs{x - x_\beta}^k e^{ -n (x - x_\beta)^2 \int_0^1 \varphi_\beta''(\alpha x + \overline{\alpha} x_\beta) \alpha d\alpha  } dx \\
                 & = 2 \int_{-x_\beta \sqrt{n} }^{+\infty}  \abs{w}^k e^{ - \frac{w^2}{2} \int_0^1 \varphi_\beta''\,\prth{  \alpha w / \sqrt{n} + x_\beta } \alpha d\alpha  } dw. 
\end{align*}

As $\varphi_\beta'' : x \mapsto -\frac{ \beta - 1}{\beta} + \tanh(x)^2$
is bounded and continuous, dominated convergence and continuity imply
\begin{align*}
\int_0^1 \varphi_\beta''(\alpha w/\sqrt{n} + x_\beta) \alpha d\alpha \tendvers{n}{+\infty} \int_0^1 \varphi_\beta''( x_\beta) \alpha d\alpha = \frac{\varphi_\beta''( x_\beta) }{2} 
\end{align*}
and $ \varphi_\beta''( x_\beta) > 0 $ by an easy study. It is also easy to see that $ x_\beta $ is the global minimum of $ \varphi_\beta $ on $ \Rr_+ $, hence that $ \varphi(x) - \varphi(x_\beta) > 0 $ on $ \Rr_+\!\!\setminus\!\ensemble{x_\beta} $. As a result, dominated convergence applies on this set to give
\begin{align*}
\int_{- x_\beta   \sqrt{n} }^{+\infty} \abs{w}^k  e^{ - w^2   \int_0^1 \varphi_\beta''(\alpha w/\sqrt{n} + x_\beta) \alpha d\alpha  } dw  
                 \rightarrow \int_\Rr \abs{w}^k e^{- \frac{w^2}{2} \varphi''(x_\beta) } dw 
                 = 2 \sqrt{\frac{2\pi}{\varphi_\beta''(x_\beta)^{k + 1} } } \, \Esp{\abs{G}^k},
\end{align*}
which is the result.
\end{proof}

\medskip
\subsection{Kolmogorov distance estimates}


\begin{lemma}[Kolmogorov Distance between two centered binomials]\label{Lemma:Binomials}
For all $ t \in (0, 1) $, define $\centered{S_n}(t) := S_n(t) - n(2t-1) $ with $ S_n(t) = \sum_{k = 1}^n B_k(t) $ and $ B_k(t) \sim \mathrm{i.i.d.}\Ber_{\pm 1}(t) $. Then, for all $p,q \in (0, 1) $, one has
\begin{align*}
\dKol\prth{ \centered{S_n}(p) , \centered{S_n}(q)  } \leq \abs{p-q} \abs{ \frac{ 1 - (p+q)}{p(1-p)}} + O\prth{ \frac{1}{\sqrt{n} } }.
\end{align*}
\end{lemma}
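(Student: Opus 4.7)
The plan is to prove the lemma by reducing the problem to a comparison between two centered Gaussians via the classical Berry-Ess\'een theorem, and then to carry out that Gaussian comparison by a direct mean-value argument combined with the algebraic identity $\sigma(p)^2 - \sigma(q)^2 = -4(p-q)(1-(p+q))$, where $\sigma(t)^2 := 4t(1-t) = \Var(B(t))$.

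First I would apply \eqref{Ineq:BerryEsseen:Kol} separately at $t = p$ and $t = q$. Since $\centered{S_n}(t)$ is the centered sum of $n$ i.i.d.\ $\pm 1$-valued random variables of variance $\sigma(t)^2$ and uniformly bounded third absolute moment, Berry-Ess\'een yields
\[
\dKol\!\prth{ \centered{S_n}(t),\, \sigma(t)\sqrt{n}\, G } \leq \frac{C(t)}{\sqrt{n}}, \qquad G \sim \Ns(0,1),
\]
with $C(t)$ depending on $t$ only through $\sigma(t)^{-3}$. The triangle inequality then reduces the statement to controlling the purely Gaussian quantity
\[
\dKol\!\prth{ \sigma(p)\sqrt{n}\, G,\, \sigma(q)\sqrt{n}\, G } = \sup_{x \in \Rr} \abs{ \Phi(x/\sigma(p)) - \Phi(x/\sigma(q)) }
\]
(using the scaling invariance of $\dKol$).

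Next, I would estimate the Gaussian difference by the mean value theorem applied to $t \mapsto \Phi(x/t)$ on the interval between $\sigma(p)$ and $\sigma(q)$, whose derivative is $-x\varphi(x/t)/t^2$. Writing $a := \min(\sigma(p),\sigma(q))$ and $b := \max(\sigma(p),\sigma(q))$, the universal bound $\sup_{y \in \Rr} \abs{y\varphi(y)} = 1/\sqrt{2\pi e}$ (attained at $\abs{y}=1$) gives
\[
\sup_{x \in \Rr}\abs{\Phi(x/a) - \Phi(x/b)} \leq \frac{b - a}{a\sqrt{2\pi e}}.
\]
The key algebraic step is then to factor
\[
b - a = \frac{b^2 - a^2}{a + b} = \frac{4 \abs{p-q}\, \abs{1-(p+q)}}{\sigma(p) + \sigma(q)},
\]
so that combining with the previous display and using $a(\sigma(p)+\sigma(q)) \geq 2a^2 = 8 \min(p(1-p), q(1-q))$ gives
\[
\dKol\!\prth{ \sigma(p)\sqrt{n}\, G,\, \sigma(q)\sqrt{n}\, G } \leq \frac{\abs{p-q}\,\abs{1-(p+q)}}{2\sqrt{2\pi e}\,\min(p(1-p), q(1-q))}.
\]
Assembled with Step~1, this yields the announced bound, with $p(1-p)$ in the denominator in the case $\sigma(p) \leq \sigma(q)$, and a symmetric version with $q(1-q)$ otherwise.

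The main obstacle is not analytic but rather the mild asymmetry of the stated bound: the natural denominator produced by this Gaussian comparison is $\min(p(1-p), q(1-q))$ rather than $p(1-p)$ specifically. In the two applications of the lemma (proofs of Theorems~\ref{Theorem:MagnetisationBeta=1:dKol} and~\ref{Theorem:MagnetisationBeta_n:dKol}), the arguments $\Pb$ and $\Qb$ both concentrate at $\tfrac{1}{2}$ so that $\Pb(1-\Pb)$ and $\Qb(1-\Qb)$ are both equal to $\tfrac{1}{4} + o(1)$, and the discrepancy between $\min$ and $p(1-p)$ is absorbed into the $O(1/\sqrt{n})$ remainder term; the Berry-Ess\'een constants $C(t)$, which blow up as $t \to 0^+$ or $t\to 1^-$, cause no trouble for the same reason.
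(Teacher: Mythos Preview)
Your proof is correct and takes a genuinely different, more elementary route than the paper's. Both arguments begin identically by applying Berry--Ess\'een at $t=p$ and $t=q$ and reducing to the Gaussian comparison $\dKol(\sigma(p)\sqrt{n}\,G,\,\sigma(q)\sqrt{n}\,G)$. The paper then invokes Stein's method: it writes the Kolmogorov distance via the Stein equation for $G_p$, applies the Gaussian integration-by-parts identity $\Esp{G_q f(G_q)} = \sigma_n^2(q)\,\Esp{f'(G_q)}$, and uses the standard bound $\sup_w\|f'_{w,p}\|_\infty \leq 1$ on the derivative of the Stein solution. This yields exactly $|1 - \sigma^2(q)/\sigma^2(p)| = |p-q|\,|1-(p+q)|/(p(1-p))$, with the asymmetric denominator arising from the choice of which Gaussian's Stein equation is used. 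Your mean-value argument bypasses the Stein machinery entirely and even gains the extra factor $1/(2\sqrt{2\pi e}) < 1$, at the cost of the symmetric denominator $\min(p(1-p),q(1-q))$; as you correctly observe, in the applications both $\Pb(1-\Pb)$ and $\Qb(1-\Qb)$ equal $\tfrac14 + o(1)$, so this discrepancy is harmless. The paper's approach has the virtue of producing the precise stated form; yours is shorter and self-contained.
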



\begin{proof}
Recall that $ \Esp{\centered{S_n}(t)} = 0 $ and set $ \sigma_n^2(q) := \Var(\centered{S_n}(q)) = 4q(1-q) n $. 
%
%
%
We define $ G_p \sim \Ns(0,\sigma_n^2(p))$ and $ G_q \sim \Ns(0,\sigma_n^2(q)) $. The triangle inequality yields
\begin{align*}
\dKol\prth{ \centered{S_n}(p) , \centered{S_n}(q)  } & \leq \dKol\prth{ \centered{S_n}(p) , G_p  }  + \dKol\prth{ \centered{S_n}(q) , G_q  } + \dKol\prth{ G_p , G_q  } \\
                 & \leq \dKol\prth{ G_p , G_q  } + O\prth{ \frac{1}{\sqrt{n} } }
\end{align*}
by the Berry-Ess\'een theorem \eqref{Ineq:BerryEsseen:Kol}. It remains to prove that
\begin{align*}
\dKol\prth{ G_p , G_q  } & \leq \abs{p-q} \abs{ \frac{ 1 - (p+q)}{p(1-p)}}.
\end{align*}

Stein's method for $G_p$ gives \cite[(2.2)]{Ross}
\begin{align*}
\dKol\prth{ G_p , G_q  } & := \sup_{w \in \Rr} \abs{ \Prob{G_p \leq w} - \Prob{G_q \leq w} } \\
                         & = \sup_{w \in \Rr} \abs{\Esp{f_{w, p}'(G_q) - \frac{G_q}{\sigma_n^2(p)} f_{w, p}(G_q)}},
\end{align*}
where
\begin{align*}
f_{w, p}(x) := \frac{1}{f_{G_p}(x)} \Esp{  \Unens{ G_p \leq x } \prth{ \Unens{ G_p \leq w } - \Prob{ G_p \leq w } } }, \quad f_{G_p}(x) := \frac{1}{\sigma_n(p) \sqrt{2\pi}} e^{ - \frac{x^2}{2 \sigma_n(p)^2 } }
\end{align*}
is the solution of the Stein equation for $G_p$ (\cite[(19)]{SteinApprox}/\cite[(2.1)]{Ross}).

Moreover, the Gaussian integration by parts gives for $ f_{w, p} \in \Ce^1 $
\begin{align*}
\Esp{G_q f_{w, p}(G_q)} = \sigma_n^2(q) \Esp{f_{w, p}'(G_q)},
\end{align*}
implying
\begin{align*}
\dKol\prth{ G_p , G_q  } & = \sup \limits_{w \in \Rr} \abs{\Esp{f_{w, p}'(G_q) - \frac{\sigma_n^2(q)}{\sigma_n^2(p)} f_{w, p}'(G_q)}} \\
                                       & = \abs{ 1 - \frac{\sigma_n^2(q)}{\sigma_n^2(p)}} \sup_{w \in \Rr} \abs{\Esp{f_{w, p}'(G_q)}} \\
                                       & \leq \abs{ 1 - \frac{\sigma_n^2(q)}{\sigma_n^2(p)}} \sup_{w \in \Rr} \norm{f_{w, p}'}_\infty 
\end{align*}
and (see e.g. \cite[(22)]{SteinApprox}) $\sup_{w \in \Rr} \norm{f_{w, p}'}_\infty \leq 1$.
Finally
\begin{align*}
\abs{ 1 - \frac{\sigma_n^2(q)}{\sigma_n^2(p)}} & = \abs{ 1 - \frac{4 q(1-q)n}{4 p(1-p)n}} = \abs{p-q} \abs{ \frac{ 1 - (p+q)}{p(1-p)}},  
\end{align*}
%
%
which gives the result.
\end{proof}

\section{An identity in law}\label{Sec:IdentityInLaw}

We define the \textit{logistic function} $ \psi $ and its bijective inverse $ \psi\inv $ by
\begin{align*}
\psi(\alpha) & := \frac{e^\alpha }{e^{\alpha} + e^{-\alpha} } = \frac{1}{1 + e^{-2 \alpha } } = \frac{1}{2}\prth{ 1 + \tanh(\alpha) }, \quad\alpha \in \Rr, \\
\psi\inv(p) & := -\frac{1}{2} \log\prth{ p\inv - 1 } = \Argtanh(2p - 1), \quad p \in \crochet{0, 1}.
\end{align*}

Noting that $ \psi(0) = \frac{1}{2} $, we define
\begin{align}\label{Convention:Bernoulli}
X \equiv X\prth{\tfrac{1}{2}} = X\crochet{0} \sim \Ber_{\pm 1}\prth{\tfrac{1}{2} }, \quad X(\psi(\alpha)) = X\crochet{\alpha} \sim \Ber_{\pm 1}\prth{ \psi(\alpha) }.
\end{align}

It is well known that the exponential bias of a Bernoulli random variable $ X\crochet{\beta} $ is given by \cite{BarhoumiModOmega}
\begin{align}\label{Eq:BernoulliBias}
\frac{e^{\alpha X[\beta] } }{ \Esp{ e^{ \alpha X[\beta] } } } \bullet \Pp_{ X[\beta] } = \Pp_{ X[\alpha + \beta] }. 
\end{align}
Define $ S_n\crochet{\alpha} := \sum_{k = 1}^n X_k\crochet{\alpha} $ and $ S_n \equiv S_n\crochet{0} $. The Bernoulli bias \eqref{Eq:BernoulliBias} implies the Binomial bias:
\begin{align}\label{Eq:BinomialBias}
\frac{e^{\alpha S_n[\beta] } }{ \Esp{ e^{ \alpha S_n[\beta] } } } \bullet \Pp_{ S_n[\beta] }  = \Pp_{ S_n[\alpha + \beta] }  
\end{align}
and one also has the Gaussian bias, with $ G  \sim \Ns(0, 1)$,
\begin{align}\label{Eq:GaussianBias}
\frac{e^{\gamma G  } }{ \Esp{ e^{ \gamma G  } } } \bullet \Pp_{ G  } = \Pp_{ G + \gamma } .
\end{align}

We start by deriving the De Finetti measure \eqref{Eq:DeFinettiCW:Meas:P} and \eqref{Eq:DeFinettiMeasureP}:

\begin{lemma}[The De Finetti measure of the Curie-Weiss spins]\label{Lemma:DeFinettiCWspins}
Recall from \eqref{Def:Law:RnBeta} that
\begin{align*}
\varphi_\beta(u) := \frac{u^2}{2\beta} - \log\cosh(u), \qquad \Ze_{n, \beta} := \int_\Rr e^{-n \varphi_\beta}.
\end{align*}

Then,
\begin{align}\label{Eq:DeFinettiCW:Desintegration}
\begin{aligned}
\Pp_{(X_1^{(\beta)}, \dots, X_n^{(\beta)})} & = \int_\Rr  \Pp_{(X_1[u], \dots, X_n[u])} \mu_n(du), \qquad \mu_n(du) = \frac{1}{\Ze_{n, \beta} }   e^{ - n \varphi_\beta(u) } du, \\
                   & = \int_\Rr  \Pp_{(X_1(p), \dots, X_n(p))} \widetilde{\nu}_n(dp), \qquad \widetilde{\nu}_n = \mu_n\circ\psi\inv,
\end{aligned}
\end{align}
where $ \widetilde{\nu}_n $ is defined explicitely in \eqref{Eq:DeFinettiMeasureP}.
\end{lemma}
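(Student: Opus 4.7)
The central idea is the classical Hubbard--Stratonovich trick: linearise the quadratic term $e^{\beta S_n^2/(2n)}$ by writing it as a Gaussian integral, so that the Curie--Weiss tilt reduces to a \emph{linear} exponential tilt of the i.i.d.\ spins, to which one can directly apply the Bernoulli/binomial bias identity \eqref{Eq:BinomialBias}.

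\textbf{Step 1 (Hubbard--Stratonovich).} Using the Gaussian MGF $\Esp{e^{aG}}=e^{a^2/2}$ with $G\sim\Ns(0,1)$ independent of $(X_k)$, write
\[
e^{\frac{\beta}{2n} S_n^{2}} \;=\; \Esp[G]{e^{\sqrt{\beta/n}\, G\, S_n}}.
\]
For any bounded test function $f$, substitute this into the numerator of \eqref{Def:CurieWeiss} and exchange the two independent expectations. Because the $X_k$ are i.i.d.\ $\Ber_{\pm1}(1/2)$, the binomial bias \eqref{Eq:BinomialBias} applied to the linear tilt $e^{\sqrt{\beta/n}\, G\, S_n}$ yields
\[
\Esp{e^{\sqrt{\beta/n}\, G\, S_n} f(X_1,\dots,X_n)} \;=\; \cosh\!\bigl(\sqrt{\beta/n}\,G\bigr)^{n}\, \Esp{f\bigl(X_1[\sqrt{\beta/n}\,G],\dots,X_n[\sqrt{\beta/n}\,G]\bigr)}.
\]

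\textbf{Step 2 (Change of variable).} Set $u := \sqrt{\beta/n}\, G$ so that $\frac{G^{2}}{2}=\frac{n u^{2}}{2\beta}$ and $dG=\sqrt{n/\beta}\,du$. After taking the Gaussian expectation and factoring $\sqrt{n/(2\pi\beta)}$, both numerator and denominator of \eqref{Def:CurieWeiss} pick up the same prefactor and one gets
\[
\Esp{f(X_1^{(\beta)},\dots,X_n^{(\beta)})} \;=\; \frac{1}{\Ze_{n,\beta}}\int_\Rr e^{-n\varphi_\beta(u)}\, \Esp{f(X_1[u],\dots,X_n[u])}\, du,
\]
with $\varphi_\beta(u)=\frac{u^{2}}{2\beta}-\log\cosh(u)$ and $\Ze_{n,\beta}=\int_\Rr e^{-n\varphi_\beta}$, by taking $f\equiv 1$ to identify the denominator. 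This is precisely the first identity in \eqref{Eq:DeFinettiCW:Desintegration}.

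\textbf{Step 3 (Reparametrisation by $p=\psi(u)$).} Push forward $\mu_n$ under the logistic bijection $\psi$: from $\psi'(u)=\frac{1}{2\cosh(u)^{2}}$ and $1-(2p-1)^{2}=\cosh(u)^{-2}$ one gets $du = \frac{2\,dp}{1-(2p-1)^{2}}$, while
\[
-n\varphi_\beta(u) \;=\; -\tfrac{n}{2\beta}\Argtanh(2p-1)^{2} - \tfrac{n}{2}\log\bigl(1-(2p-1)^{2}\bigr).
\]
The $1/(1-(2p-1)^2)$ Jacobian absorbs the missing $1$ in the exponent $n/2 + 1$ of \eqref{Eq:DeFinettiMeasureP}, giving exactly $\widetilde\nu_n=\mu_n\circ\psi^{-1}$, which finishes the proof.

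The only real obstacle is bookkeeping of the normalisations: one must carefully track the factor $\sqrt{n/(2\pi\beta)}$ coming from the change of variable $u=\sqrt{\beta/n}\,G$, and check that it cancels between numerator and denominator so that the partition function is exactly $\Ze_{n,\beta}=\int_\Rr e^{-n\varphi_\beta}$ and not some rescaled version; once that is verified, both identities in \eqref{Eq:DeFinettiCW:Desintegration} follow immediately.
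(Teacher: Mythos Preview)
Your proof is correct and follows essentially the same route as the paper: linearise the quadratic tilt via the Gaussian identity (Hubbard--Stratonovich), apply the Bernoulli bias \eqref{Eq:BernoulliBias}/\eqref{Eq:BinomialBias} to recognise the conditional law as i.i.d.\ $\Ber_{\pm1}(\psi(u))$, change variables to $u$, and then push forward by $\psi$. The only cosmetic difference is that the paper leaves Step~3 as ``take the image measure with $\psi$'' while you spell out the Jacobian that produces the exponent $n/2+1$ in \eqref{Eq:DeFinettiMeasureP}.
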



\begin{proof}
Set 
$$
S_n := \sum_{k = 1}^n X_k, \quad \Zg_{n, \beta} := \Esp{ e^{ \frac{\beta^2}{2n} S_n^2 } }. 
$$ 

Then, for $ f $ bounded, we have
\begin{align*}
\Esp{ f\prth{ X_1^{(\beta)}, \dots, X_n^{(\beta)} } } & := \frac{1}{\Zg_{n, \beta}} \Esp{ e^{ \frac{\beta^2}{2n} S_n^2 } f\prth{ X_1 , \dots, X_n  } } \\ 
                & = \frac{\sqrt{n}}{\beta \Zg_{n, \beta}\sqrt{2\pi} } \int_\Rr \Esp{ e^{ u S_n  } f\prth{ X_1 , \dots, X_n  } } e^{ - n \frac{u^2}{2\beta } } du   \\
                & = \frac{\sqrt{n}}{\beta \Zg_{n, \beta}\sqrt{2\pi} } \int_\Rr \Esp{ e^{ u S_n  } } \Esp{  f\prth{ X_1 \crochet{u } , \dots, X_n\crochet{u }   } } e^{ - n \frac{u^2}{2\beta } } du \mbox{  with \eqref{Eq:BernoulliBias} } \\ 
                & = \frac{\sqrt{n}}{\beta \Zg_{n, \beta}\sqrt{2\pi} } \int_\Rr \Esp{  f\prth{ X_1 \crochet{ u } , \dots, X_n\crochet{u }   } } \cosh(u)^n e^{ - n \frac{u^2}{2\beta } } du \\
                & = \int_\Rr \Esp{  f\prth{ X_1 \crochet{ u } , \dots, X_n\crochet{u } } } \mu_n(du),
\end{align*}
as $ \Ze_{n, \beta} = \frac{\beta \Zg_{n, \beta}\sqrt{2\pi}}{\sqrt{n}} $ by setting $ f = 1 $. The other case is obtained by taking the image measure with $ \psi $, using the convention \eqref{Convention:Bernoulli} that writes $ X(p) = X\crochet{\psi\inv(p)} $.
\end{proof}

\begin{remark}
Notice the difference between the bias definition \eqref{Def:CurieWeiss} and the mixture property \eqref{Eq:DeFinettiCW:Desintegration}. Another way to rephrase the whole philosophy of the surrogate approach would be: when adding a particular penalisation to an i.i.d. sum is equivalent to perform an independent randomisation, one should only use the randomisation framework. Such a situation is of course extremely particular, but one could try to describe more complicated systems as approximation of ``mixture systems'', with a replacement method for instance. We leave this study for a future work.
\end{remark}

We now show that the magnetisation is solution to a fixed point equation in law, which allows to simulate it with a Markov chain approximating the fixed point: 

\begin{lemma}[The magnetisation as a solution to an equation in law]\label{Lemma:MagnetisationWithRandomisation}
We have
\begin{align}\label{Eq:CWMagnetisationAsARandomisation}
M_n^{(\beta)} \eqlaw \widetilde{S}_n\crochet{  \sqrt{\frac{\beta}{n} } G + \mu +  \frac{\beta}{n} M_n^{(\beta)}},  
\end{align}
where $ (\widetilde{S}_n, G, M_n^{(\beta)}) $ are independent.
\end{lemma}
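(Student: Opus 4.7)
The plan is to identify two different but equivalent ways of representing $M_n^{(\beta,\mu)}$ as a randomised sum of i.i.d.\ Bernoullis~: on one hand the De Finetti description of lemma~\ref{Lemma:DeFinettiCWspins}, which rewrites the magnetisation as $\widetilde{S}_n[V]$ with $V$ having density proportional to $e^{-n(v-\mu)^2/(2\beta)}\cosh(v)^n$ (a straightforward $\mu$-extension of the argument in annex~\ref{Sec:IdentityInLaw}), and on the other hand the Gaussian convolution of the magnetisation itself coming from Hubbard--Stratonovich. The fixed-point identity will drop out from matching the densities of these two random parameters.

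More concretely, I would set $U := \sqrt{\tfrac{\beta}{n}}\,G + \mu + \tfrac{\beta}{n}M_n^{(\beta)}$ with $G,M_n^{(\beta)}$ independent, and compute its Lebesgue density by conditioning on $M_n^{(\beta)}$, using the Gaussian density of $G$~:
\begin{align*}
f_U(u) = \sqrt{\tfrac{n}{2\pi\beta}}\,\Esp{\exp\!\prth{-\tfrac{n}{2\beta}\prth{u - \mu - \tfrac{\beta}{n}M_n^{(\beta)}}^{\!2}}}.
\end{align*}
Expanding the square yields an overall factor $e^{-n(u-\mu)^2/(2\beta)}$, a cross term $e^{(u-\mu)M_n^{(\beta)}}$, and a quadratic term $e^{-\tfrac{\beta}{2n}(M_n^{(\beta)})^2}$. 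Unwinding the bias definition \eqref{Eq:CurieWeissGeneral} of $M_n^{(\beta)}$, the quadratic factor cancels exactly the Curie-Weiss penalisation, leaving
\begin{align*}
\Esp{e^{(u-\mu)M_n^{(\beta)} - \tfrac{\beta}{2n}(M_n^{(\beta)})^2}} = \frac{\Esp{e^{uS_n}}}{\Esp{e^{\tfrac{\beta}{2n}S_n^2 + \mu S_n}}} = \frac{\cosh(u)^n}{\Zg_{n,\beta,\mu}},
\end{align*}
by the factorisation $\Esp{e^{uS_n}} = \cosh(u)^n$ for the i.i.d.\ Bernoulli reference. A parallel Hubbard--Stratonovich computation of $\Zg_{n,\beta,\mu}$ itself (as in the proof of lemma~\ref{Lemma:DeFinettiCWspins}) shows that $\Zg_{n,\beta,\mu} = \sqrt{\tfrac{n}{2\pi\beta}}\int e^{-n(v-\mu)^2/(2\beta)}\cosh(v)^n dv$, so that $f_U$ coincides with the density of $V$. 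Hence $U \eqlaw V$, and combining with $M_n^{(\beta)} \eqlaw \widetilde{S}_n[V]$ gives the fixed-point equation \eqref{Eq:CWMagnetisationAsARandomisation}.

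The main conceptual difficulty is the circular flavour of the statement~: the magnetisation $M_n^{(\beta)}$ appears on both sides. The resolution lies in the observation that this circularity is precisely what the De Finetti random variable $V$ encodes~-- the tilt by $e^{\beta S_n^2/(2n)}$ is dual, via Gaussian linearisation, to a random shift of the Bernoulli parameter, and that shift is itself driven by the magnetisation. All other steps are direct computations, with the external-field version of lemma~\ref{Lemma:DeFinettiCWspins} (obtained by absorbing $e^{\mu S_n}$ into the Bernoulli bias \eqref{Eq:BernoulliBias} before integrating) being the only mild adaptation needed.
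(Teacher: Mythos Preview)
Your proof is correct but follows a genuinely different route from the paper's. You go through the De Finetti representation of lemma~\ref{Lemma:DeFinettiCWspins}: you take $M_n^{(\beta,\mu)} \eqlaw \widetilde{S}_n[V]$ as given, compute the Lebesgue density of $U := \sqrt{\beta/n}\,G + \mu + (\beta/n)M_n^{(\beta)}$ by conditioning on $M_n^{(\beta)}$, and then match $f_U$ with $f_V$. The paper instead works directly at the level of test functions~: starting from $\Esp{f(M_n^{(\beta)})}$, it applies the Hubbard--Stratonovich linearisation, then the Bernoulli bias \eqref{Eq:BernoulliBias} to produce $S_n[\alpha G + \mu]$, and finally the Gaussian bias \eqref{Eq:GaussianBias} to shift the argument by $\alpha^2 \widetilde{S}_n$, recognising the leftover weight as the Curie--Weiss tilt of an independent copy $\widetilde{S}_n$. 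No reference to the De Finetti measure or to densities is made.

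What each approach buys~: your route has the conceptual payoff of explicitly identifying the De Finetti random variable $V$ as $\sqrt{\beta/n}\,G + \mu + (\beta/n)M_n^{(\beta)}$ in law, which is a slightly stronger statement than the fixed-point equation itself and clarifies the ``circularity'' you mention. The paper's route is more self-contained (it does not invoke lemma~\ref{Lemma:DeFinettiCWspins}, even in its $\mu$-extended form) and purely algebraic, chaining the two bias identities \eqref{Eq:BernoulliBias} and \eqref{Eq:GaussianBias} in sequence rather than matching densities. Both are short; yours is arguably more illuminating, the paper's more economical in prerequisites.
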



\begin{proof}
Let $ \widetilde{S}_n, S_n, G $ independent random variables and $ f $ a bounded function. One can write with $ \frac{\beta}{n} := \alpha^2 $
\begin{align*}
\Esp{ f\prth{ M_n^{(\beta)} } } & = \frac{ \Esp{ e^{ \frac{\beta}{2n} S_n^2 + \mu S_n } f(S_n) } }{ \Esp{ e^{ \frac{\beta}{2n} S_n^2 + \mu S_n } } }  \\
                 & = \frac{ \Esp{ e^{  (\alpha G + \mu) S_n} f(S_n) } }{ \Esp{ e^{(\alpha G + \mu) S_n } } } \quad\mbox{with $G$ independent of $ S_n $}\\
                 & = \frac{ \Esp{  \Espr{S_n \!}{ e^{  (\alpha G + \mu) S_n} }  f(S_n\crochet{   \alpha G + \mu    }  ) } }{ \Esp{ e^{(\alpha G + \mu) S_n } } } \quad\mbox{by \eqref{Eq:BernoulliBias}} \\
                 & = \frac{ \Esp{ e^{  (\alpha G + \mu) \widetilde{S}_n} f\prth{ S_n\crochet{   \alpha G + \mu    } } } }{ \Esp{ e^{(\alpha G + \mu) \widetilde{S}_n } } } \quad\mbox{with $ \widetilde{S}_n \eqlaw S_n $ independent of } G, S_n  \\
                 & =  \frac{ \Esp{  e^{  (\alpha G + \mu) \widetilde{S}_n}  f\prth{ S_n\crochet{   \alpha G + \mu    } } } }{ \Esp{ e^{(\alpha G + \mu) \widetilde{S}_n } } } \\
                 & = \frac{ \Esp{  \Espr{ G \,}{ e^{(\alpha G + \mu) \widetilde{S}_n } } f\prth{ S_n\crochet{  \alpha G + \mu + \alpha^2 \widetilde{S}_n } } } }{ \Esp{ e^{(\alpha G + \mu) \widetilde{S}_n } } }  \quad\mbox{by \eqref{Eq:GaussianBias} with $ \gamma = \alpha \widetilde{S}_n $,  } \\
                 & = \frac{ \Esp{  e^{ \alpha \widetilde{S}_n^2 + \mu \widetilde{S}_n }   f\prth{ S_n\crochet{  \alpha G + \mu + \alpha^2 \widetilde{S}_n } } } }{ \Esp{ e^{ \alpha \widetilde{S}_n^2 + \mu \widetilde{S}_n  } } } \\
                 & = \Esp{ f\prth{ S_n\crochet{  \alpha G + \mu + \alpha^2 \widetilde{S}^{(\beta)}_n } } }, 
\end{align*}
which gives the result.
\end{proof}

This result gives a way to simulate $ M_n^{(\beta)} $ (hence the whole sequence of spins) with a sequence of i.i.d. Binomial and Gaussian random variables $ (\Bin_k)_k $ and $ (G_k)_k $. One can form the Markov chain $ (M_k)_{k \geq 0} $ given, with obvious notations, by:
\begin{align*}
M_{k + 1} \eqlaw  \Bin_k\prth{ n,  \, \psi\prth{  \sqrt{\frac{\beta}{n} } G_k + \mu +  \frac{\beta}{n} M_k } }, \qquad M_0 \sim \Bin\prth{n, \tfrac{1}{2}}.
\end{align*}

An interesting question is to find its rate of convergence to equilibrium (i.e. to $ M_n^{(\beta)} $) that would give the precise number of iterations to simulate it, and in particular if a cut-off phenomenon appears in total variation distance.

\section*{Acknowledgements}

The authors thank J. Heiny for the reference \cite{KirschMoments}, O. H\'enard for the correction of an early mistake and W. Kirsch and Y. Velenik for interesting discussions. A particular thanks is given to J. Steif for bringing the attention of the authors to \cite{LiggettSteifToth} and the numerous references on the use of the De Finetti mixture in the study of the Curie-Weiss model. Thanks are also due to the two anonymous referees for pointing out several omissions and errors in addition to giving interesting insights and references. 

\bibliographystyle{amsplain}


\end{document}